\newtheorem{prop}{Proposition}[section]
\newtheorem{thm}[prop]{Theorem}
\newtheorem{cor}[prop]{Corollary}
\newtheorem{sch}[prop]{Scholium}
\newtheorem{ques}[prop]{Question}
\newtheorem{quess}[prop]{Questions}
\newtheorem{prob}[prop]{Problem}
\newtheorem{probs}[prop]{Problems}
\theoremstyle{definition}
\newtheorem{de}[prop]{Definition}
\theoremstyle{remark}
\newtheorem{Remark}[prop]{Remark}             
\newtheorem{Remarks}[prop]{Remarks}             
\def\C{{\mathbb C}}
\def\Z{{\mathbb Z}}
\def\R{{\mathbb R}}
\def\Q{{\mathbb Q}}
\def\D{{\mathcal D}}
\def\S4{{\mathcal S}}
\def\P{{\mathcal P}}
\def\emb{{\hookrightarrow}}
\def\inter{\mathop{\rm int}}
\def\cl{\mathop{\rm cl}}
\def\im{\mathop{\rm Im}}
\def\id{\mathop{\rm id}}
\def\max{\mathop{\rm max}\nolimits}
\def\Diff{\mathop{\rm Diff}\nolimits}
\def\co{\colon\thinspace}
\begin{document}
\title{Topologically trivial proper 2-knots}
\author{Robert E. Gompf}
\address{The University of Texas at Austin}
\email{gompf@math.utexas.edu}
\begin{abstract} 
We study smooth, proper embeddings of noncompact surfaces in 4-manifolds, focusing on {\em exotic planes} and {\em annuli}, i.e., embeddings pairwise homeomorphic to the standard embeddings of $\R^2$ and $\R^2-\inter D^2$ in $\R^4$. We encounter two uncountable classes of exotic planes, with radically different properties. One class is simple enough that we exhibit explicit level diagrams of them without 2-handles. Diagrams from the other class seem intractable to draw, and require infinitely many 2-handles. We show that every compact surface embedded rel nonempty boundary in the 4-ball has interior pairwise homeomorphic to infinitely many smooth, proper embeddings in $\R^4$. We also see that the almost-smooth, compact, embedded surfaces produced in 4-manifolds by Freedman theory must have singularities requiring infinitely many local minima in their radial functions. We construct exotic planes with uncountable group actions injecting into the pairwise mapping class group. This work raises many questions, some of which we list.
\end{abstract}
\maketitle


\section{Introduction}\label{Intro}

Classical knot theory has spawned various other lines of research with a common theme of studying ambient isotopy classes of embeddings of manifolds. In the traditional setting the domain is compact, but the problem naturally extends to the noncompact setting if we require the embeddings to be proper. The classical case $S^1\emb \R^3$ then extends to the case of knotted embeddings into $\R^3$ of the line $\R$ and ray $[0,\infty)$. For example, it is known (perhaps counterintuitively) that knotted rays exist (Fox--Artin \cite{FA}) and realize uncountably many ambient isotopy classes (McPherson \cite{McP}). (These are each obtained from the cited references by deleting the wild endpoint of an arc in $S^3$.) In a different direction, higher-dimensional spheres in $\R^n$ have been extensively studied. For example, {\em 2-knots} of a 2-sphere into $\R^4$, as well as higher genus knotted surfaces in $\R^4$, have been receiving recent attention. However, higher-dimensional {\em proper} knots are largely terra incognita. The present paper addresses proper 2-knots of surfaces in $\R^4$, with domain usually taken to be the plane $\R^2$ or the (half-open) annulus $[0,\infty)\times S^1$. In dimension 4, the smooth and topological categories are quite different. For example, there are families of compact, nonorientable surfaces in $\R^4$ that are smoothly distinct but topologically isotopic (Finashin, Kreck and Viro \cite{Fi}, \cite{Fi2}), and in fact are topologically standard by Kreck \cite{Kr} (although no orientable examples are presently known). In this paper, we work in the smooth category, but focus on those examples that appear simplest in the topological category. That is, we study smooth ambient isotopy classes of smooth, proper embeddings that are topologically ambiently isotopic to the standard plane $\R^2\subset \R^4$ or annulus $[0,\infty)\times S^1\subset\R^4$ (the standard plane minus an open disk). We call nontrivial examples {\em exotic planes} and {\em exotic annuli}, respectively. These have been known (but not widely) since the 1980s, with exotic planes implicitly given by the author in \cite[Remark~4.2]{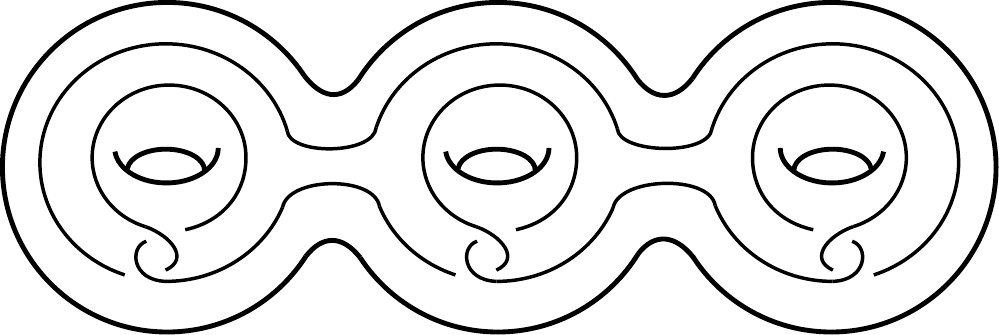} (see Remark~\ref{original} below) and a different family observed by Freedman (previously unpublished but described in Section~\ref{Branch}) shortly thereafter. They are a uniquely 4-dimensional phenomenon (Proposition~\ref{highdim}): A self-homeomorphism of $\R^n$ that is a local diffeomorphism near the standard $\R^k$ can be assumed the identity there after a smooth ambient isotopy, except in the case $(n,k)=(4,2)$; the analogous statement for annuli is only slightly weaker. The topological simplicity of exotic planes and annuli makes them particularly subtle: All of the classical invariants, such as from the homotopy type of the complement or a branched cover, fail to distinguish them. Nevertheless, we uncover a rich structure using more subtle invariants of smooth 4-manifolds. This structure often transfers to more general proper 2-knots. For example, every oriented surface in $\R^4$ obtained as the interior of a compact surface embedded rel its nonempty boundary in $B^4$ has infinitely many exotic cousins topologically isotopic to it (Corollary~\ref{slice}). This suggests a future direction of studying smooth proper 2-knots ``modulo" exotic planes, and whether these differ from topological proper 2-knots (Questions~\ref{modplanes} and \ref{univ}). However, the present paper focuses on the exotic planes, methods of distinguishing them, their range of symmetries, and some explicit diagrams of such exotica (the simplest being Figure~\ref{plane} below).

For our first approach to constructing invariants, note that any annulus $A$ in $\R^4$ has a simply connected complement. (We henceforth assume all embeddings of positive codimension are proper and all annuli are half-open, while working up to isotopies of the ambient space.) It follows that $A$ can be extended to an immersion of $\R^2$ by adding an immersed disk $D$ with $A\cap D=\partial A=\partial D$. This can be transformed to an embedded surface of finite genus. (For example, tube away double points in pairs after adding double points of one sign as necessary.) Conversely, any immersed surface with one end, finite genus and finitely many double points determines an embedded annulus. (Remove the interior of a suitably large compact surface with a single boundary component, and notice that the resulting isotopy class is independent of the choice of such surface.) For such a surface, we will say the end is {\em annular}. 

\begin{de} The {\em minimal genus} $g(A)$ of an embedded annulus $A$ is the smallest genus of an embedded, oriented surface determining $A$. The {\em kinkiness} $\kappa(A)$ is the pair $(\kappa_+,\kappa_-)$ for which $\kappa_+$ (resp.\ $\kappa_-$) is the minimal number of positive (resp.\ negative) double points in a generically immersed $\R^2$ determining $A$.
\end{de}

\noindent Note that $\kappa_+$ and $\kappa_-$ may not be realized by the same immersion of $\R^2$. (An example with $\kappa_\pm=0$ but $g=1$ can be constructed from the figure-eight knot in $\partial B^4$.) As we will see, there are exotic annuli realizing all possible values of $\kappa$, and all possible minimal genera (Theorem~\ref{0g}(b)).

These invariants are not directly useful for an embedded $\R^2$ since they obviously vanish on the annulus it determines. However, a more useful version describes the behavior at infinity of any surface $F$ determining an annulus $A$ in $\R^4$: If we smoothly one-point compactify $\R^4$ to $S^4$ in the obvious way, then $A$ becomes an {\em almost-smooth} embedded disk $D\subset S^4$, smooth except at a unique isolated singularity occurring at the added point  $\infty$. Working in a preassigned neighborhood $V$ of $\infty$ in $S^4$, we may remove a singular disk from $D$, and replace it with either a smoothly embedded surface or an immersed disk as before. Minimizing as before gives a version of $g$ or $\kappa$. However, these numbers depend in general on the choice of $V$, nondecreasing as we reduce the size of $V$.

\begin{de} The {\em minimal genus at infinity} $g^\infty(A)=g^\infty(F)$, and {\em kinkiness at infinity} $\kappa_\pm^\infty(A)=\kappa_\pm^\infty(F)$, are given by the limit in $\Z^{\ge0}\cup\{\infty\}$ of the corresponding numbers for the pair $(D,V)$ as the neighborhood $V$ of $\infty$ becomes arbitrarily small.
\end{de}

\noindent We also call these invariants the minimal genus $g(D)$ and kinkiness $\kappa_\pm(D)$ of the singular disk (or the singularity), which is equivalent to the author's original usage for disks in \cite{kink}. In Section~\ref{CH}, we reinterpret that paper and its follow-up in \cite{MinGen} to obtain the following, in the cases with nonvanishing invariants:

\begin{thm}\label{ginfty} 
There are exotic planes in $\R^4$ realizing all values of $\kappa^\infty=(\kappa_+^\infty,\kappa_-^\infty)$, with $g^\infty=\max\{\kappa_+^\infty,\kappa_-^\infty\}$.
\end{thm}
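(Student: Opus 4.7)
The plan is to transfer the author's earlier construction of exotic almost-smooth disks in $B^4$ to exotic planes in $\R^4$ via the one-point compactification $S^4=\R^4\cup\{\infty\}$. A smooth proper plane $P\subset\R^4$ closes off to an almost-smooth disk $\bar P\subset S^4$ with at most one singularity (at $\infty$), and by the definitions above the invariants $\kappa^\infty(P)$ and $g^\infty(P)$ equal the kinkiness and minimal genus of that singularity. So it suffices to produce, for each prescribed $(\kappa_+,\kappa_-)\in(\Z^{\ge0}\cup\{\infty\})^2\setminus\{(0,0)\}$, an almost-smooth disk in $S^4$ whose unique singularity has exactly those kinkiness values and associated minimal genus $\max\{\kappa_+,\kappa_-\}$.

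\textbf{Construction.} From \cite{kink}, for any such target one builds a Casson tower whose first-stage kinky handle has $\kappa_+$ positive and $\kappa_-$ negative self-plumbings, embedded in $B^4$ with core an almost-smooth disk $D_0$ realizing the target singularity. Embedding $B^4\subset S^4$ and capping $D_0$ by a flat disk in the complementary 4-ball yields an almost-smooth disk $D\subset S^4$ with the same unique singularity. Removing that singular point and identifying $S^4\setminus\{\infty\}\cong\R^4$ smoothly produces the desired plane $P\subset\R^4$; Freedman's theorem makes the Casson handle topologically standard, so $P$ is topologically ambient isotopic to $\R^2\subset\R^4$, and is thus an exotic plane whenever $(\kappa_+,\kappa_-)\ne(0,0)$.

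\textbf{Invariants.} The first-stage kinky handle supplies a generic immersion of a disk in an arbitrarily small neighborhood of $\infty$ with $\kappa_+$ positive and $\kappa_-$ negative double points, so $\kappa_\pm^\infty(P)\le\kappa_\pm$; the matching lower bounds are the gauge-theoretic content of \cite{kink}, obstructing any reduction of either count via Donaldson-type invariants of the 4-manifold assembled from the Casson tower. For the genus, ambient tubing of this immersion produces an embedded surface of genus $\max\{\kappa_+,\kappa_-\}$: pair each opposite-sign pair of double points and tube them to trade two kinks for one handle, then resolve each remaining same-sign kink by one further handle. This gives $g^\infty(P)\le\max\{\kappa_+,\kappa_-\}$. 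Conversely, for \emph{any} embedded spanning surface of genus $g$ near the singularity, self-plumbing each handle with a chosen sign produces an immersed disk with $g$ same-sign double points; choosing all kinks positive yields $\kappa_+^\infty\le g^\infty$, and choosing all negative yields $\kappa_-^\infty\le g^\infty$, so $g^\infty\ge\max\{\kappa_+^\infty,\kappa_-^\infty\}$. Combined with the upper bound this forces $g^\infty(P)=\max\{\kappa_+,\kappa_-\}$.

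\textbf{Main obstacle.} The real work is the gauge-theoretic lower bound on kinkiness --- showing the constructed singularity cannot be realized by an immersed disk with strictly fewer positive (or fewer negative) double points in any sufficiently small neighborhood of $\infty$. This is the central result of \cite{kink} and its refinement \cite{MinGen}, and in the present theorem it is reinterpreted in the language of proper planes rather than reproved. The remaining ingredients --- the explicit kinky-handle immersion, the tubing formula, the self-plumbing handle-to-kink conversion, and the smooth identification $S^4\setminus\{\infty\}\cong\R^4$ --- are all elementary.
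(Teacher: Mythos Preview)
Your overall strategy matches the paper's: realize the standard $S^2\subset S^4$ as an almost-smooth sphere whose unique singularity lies inside a copy of $CH_{m,n}$, delete the singular point to get a topologically standard plane in $\R^4$, and read $\kappa^\infty$ and $g^\infty$ off the singularity. The lower bound $\kappa^\infty_\pm\ge(m,n)$ and the general inequality $g^\infty\ge\max\{\kappa^\infty_\pm\}$ are handled correctly. But there is a real gap in your upper bounds.

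You write that ``the first-stage kinky handle supplies a generic immersion of a disk in an arbitrarily small neighborhood of $\infty$.'' It does not. The first-stage core of $CH_{m,n}$ is a single immersed disk sitting at one fixed scale, bounded by the attaching circle, far from the singular point $p$. What actually forms a neighborhood basis of $p$ is the nested family of Casson handles produced on the way to the cellular set $C$ that is shrunk to $p$ (Theorem~\ref{almostSmooth}); a priori those inner Casson handles are uncontrolled refinements whose first stages may have arbitrarily many double points, so nothing you have said prevents $\kappa_\pm(D,V)$ from growing without bound as $V$ shrinks. The paper closes this gap by arranging Freedman's reimbedding to take place only above the first stage, so that every Casson handle in the nest shares the $(m,n)$ first-stage core of $CH_{m,n}$; each then supplies the needed immersed disk (and, after tubing, the genus-$\max\{m,n\}$ surface) inside arbitrarily small $V$. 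The same mechanism, with the nested first stages instead refined to grow without bound, is how infinite values of $\kappa^\infty_\pm$ are realized---something your fixed-first-stage construction cannot achieve.

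Two smaller points. You explicitly exclude $(0,0)$, but the theorem claims all values; that case genuinely requires the separate branched-cover construction of Theorem~\ref{0g}(a) and should at least be flagged. And capping a disk $D_0\subset B^4$ by a disk in the complementary $4$-ball produces a sphere, not ``an almost-smooth disk $D\subset S^4$.''
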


\noindent The exceptional case with vanishing $\kappa^\infty$ and $g^\infty$ (Theorem~\ref{0g}(a) below) is proved in Section~\ref{Branch}, using a different construction that also realizes each of infinitely many values of $g^\infty$ by uncountably many exotic planes (Corollary~\ref{ginf}).

While realizing large values of these invariants gives a sense in which exotic planes can be arbitrarily complicated at infinity, we also investigate how simple they can be. An annulus $A$ in $\R^4$ has $g^\infty=0$ if and only if there is a homotopy from the corresponding singular disk in $S^4$ to a smoothly embedded disk, supported in an arbitrarily small neighborhood of the singularity in $S^4$. This implies the kinkiness at infinity also vanishes. Specializing to a proper 2-knot $\R^2\emb\R^4$ and inverting our viewpoint, we have:

\begin{de}\label{gen} We will say a proper 2-knot $F\co\R^2\emb\R^4$ is {\em generated by 2-knots} if for every compact subset $K\subset\R^4$ there is a disk containing $F^{-1}(K)$ in $\R^2$ whose image can be extended to an embedded sphere by adding a disk in the complement of $K$. If the sphere can always be chosen to be unknotted, we will say $F$ is {\em generated by unknots}.
\end{de}

\noindent Thus, $F$ is generated by 2-knots if and only if $g^\infty(F)=0$, but generation by unknots is stronger (strictly, as we see below). For comparison, note that the definitions generalize to any proper embedding between Euclidean spaces. It is clear that every proper knot $\R\emb\R^3$ is generated by knots. (Remove the ends and connect the resulting endpoints by an arc near infinity.) This contrasts with the above exotic planes that have $g^\infty\ne0$ so are not generated by 2-knots. Some proper knots $\R\emb\R^3$ are generated by unknots. (Thicken any knotted ray $\gamma$ to an embedded $I\times[0,\infty)$. The resulting boundary line is generated by unknots of the form $\partial(I\times[0,t])$, but it is still knotted since each end is isotopic to $\gamma$.) In contrast, infinite connected sums, for example, are not generated by unknots, since any truncation near infinity will give a nontrivial connected sum. Similarly, knotted planes that are generated by 2-knots but not unknots can be constructed by summing the standard plane with an infinite sequence of 2-knots. However, it is not clear whether exotic planes can have this behavior (Questions~\ref{gen2knots}).

\begin{thm}\label{0g}
(a) There is an uncountable collection of exotic planes that are generated by unknots, so $g^\infty=0$, determining pairwise nonisotopic exotic annuli.

(b) For each value of $\kappa\in\Z^{\ge0}\times\Z^{\ge0}$, there are uncountably many exotic annuli realizing this value with $g=\max\{\kappa_\pm\}$ and $g^\infty=0$.
\end{thm}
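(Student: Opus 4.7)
Part (a) is carried out in Section~\ref{Branch}. The strategy is to build each exotic plane $P$ as a limit of infinitely many compactly supported, topologically trivial, but smoothly nontrivial modifications of the standard plane $P_0\subset\R^4$, performed in a sequence of disjoint $4$-balls $B_1,B_2,\ldots$ escaping to infinity. The modifications come from Freedman's branched-cover construction. Because each finite truncation of the modification sequence can be absorbed back into a smoothly standard plane (by an ambient isotopy exploiting the unbounded complement), every compact sub-disk of $P$ sits smoothly inside an unknotted $2$-sphere in $\R^4$ whose complementary disk can be chosen outside any prescribed compact set; hence $P$ is generated by unknots and $g^\infty(P)=0$. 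An uncountable parameter space of modification sequences (for instance, indexed by $\{0,1\}^{\mathbb N}$) produces uncountably many planes, and pairwise non-isotopy of the associated annuli is established via smooth invariants of the end structure of the complement.

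For part (b), begin with an exotic annulus $A_0$ arising from part (a). Because the generating plane is itself an embedded $\R^2$ extension of $A_0$, one immediately has $\kappa(A_0)=(0,0)$ and $g^\infty(A_0)=0$. Given a target $(\kappa_+,\kappa_-)$, fix a small $4$-ball $B\subset\R^4$ disjoint from the exotic data of $A_0$ and modify $A_0$ inside $B$ by replacing a small standard sub-disk with a disk bounded by an appropriate knot $K\subset\partial B$. Choose $K$ (e.g., a boundary connected sum of right- and left-handed trefoils, or a similar well-understood family) so that $K$ admits an immersed slice disk with exactly $\kappa_+$ positive and $0$ negative double points, another with $0$ positive and $\kappa_-$ negative, and an embedded slice surface of genus $\max\{\kappa_\pm\}$. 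Grafting this local data into $A_0$ produces an annulus $A$ realizing the upper bounds $\kappa(A)\le(\kappa_+,\kappa_-)$ and $g(A)\le\max\{\kappa_\pm\}$; since the modification is compactly supported, $g^\infty(A)=0$ is preserved.

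The main obstacle is the matching lower bounds on kinkiness and genus. The key point is that the spatial separation between the local kinkiness modification at $B$ and the exotic end structure of $A_0$ prevents any long-range smooth Whitney cancellation of double points across the two regions. Thus the classical slice-genus and kinkiness obstructions for $K$ (from the signature or more refined gauge-theoretic invariants) continue to apply to extensions of $A$, forcing $\kappa(A)\ge(\kappa_+,\kappa_-)$ and $g(A)\ge\max\{\kappa_\pm\}$. Finally, the uncountable parameter from part~(a) transfers through this construction unchanged, producing for each $(\kappa_+,\kappa_-)$ an uncountable family of exotic annuli with $g=\max\{\kappa_\pm\}$ and $g^\infty=0$.
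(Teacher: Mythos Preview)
Your plan for part~(b) has a genuine gap. You propose to graft a local knot $K$ into a ball $B$ and then argue that ``spatial separation'' between $B$ and the exotic end prevents long-range cancellation, so that the classical slice/kinkiness obstructions for $K$ survive. But $\kappa(A)$ and $g(A)$ are minima over \emph{all} embedded or immersed fillings in $\R^4$, and nothing forces such a filling to respect your decomposition. A candidate minimal surface can wander freely through $\R^4$, and since $A_0$ itself extends to an embedded plane, the local knotting near $\partial A$ is not obviously an obstruction to a low-genus global filling. ``Spatial separation prevents Whitney cancellation'' is an intuition, not an argument; making it rigorous is precisely the hard part, and you have not indicated any mechanism for it.

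The paper's route to (b) is entirely different and avoids this issue. For $(\kappa_+,\kappa_-)\ne(0,0)$, the annuli are not local modifications of anything: each is the end of the first-stage core $\inter c_1$ of a Casson handle $CH_{m,n}$ (Corollary~\ref{annuli}). The lower bounds $\kappa_+\ge m$, $\kappa_-\ge n$, $g\ge\max\{m,n\}$ come from gauge theory---the adjunction inequality applied to a Stein structure on $B^4\cup CH_{m,0}$ (Theorem~\ref{CHclass})---not from any locality argument. Uncountability comes from the Taubes periodic-end theorem applied to Freedman's nested Casson handles, and $g^\infty=0$ is verified by an explicit computation in the level diagrams (Section~\ref{DrawPlanes}). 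The remaining case $\kappa=(0,0)$ uses the annuli determined by the planes of~(a).

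Your sketch of~(a) is too vague to assess, and the mechanism you describe (independent modifications in disjoint balls $B_i$ escaping to infinity, parametrized by $\{0,1\}^{\mathbb N}$) is not what the paper does. The paper's planes are branch loci of a specific involution $r_y$ on small exotic $\R^4$'s $R$ built from Figure~\ref{N}(a); they are distinguished by the diffeomorphism types of these double branched covers (which vary over uncountably many types as the Casson handle $CH$ is refined), and the ``generated by unknots'' property is established by realizing each plane explicitly as a satellite on the Casson-handle annulus with an unknotted-disk pattern (Theorem~\ref{drawPlanes} and the proof following it). Your assertion that finite truncations ``can be absorbed back into a smoothly standard plane by an ambient isotopy exploiting the unbounded complement'' would, if it worked as stated, actually show the whole plane is standard; the paper's argument for generation by unknots is more delicate and relies on the specific satellite structure.
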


\noindent We construct and distinguish these examples in, respectively, Section~\ref{planeFamilies} (proof of Theorem~\ref{order}) and Section~\ref{CH} (Corollary~\ref{annuli}). The proof is completed in Section~\ref{DrawPlanes}, with the behavior at infinity established by explicitly drawing the surfaces. We will see that the exotic planes in (a) are simpler than our other exotic planes in many ways (summarized in Section~\ref{Questions}). For clarity of exposition, we will refer to such examples as {\em simple}, although it is not presently clear which properties should be singled out for a formal definition.

Since the simple exotic planes in (a) have $g^\infty=\kappa^\infty_{\pm}=0$, we need a new invariant to distinguish them. In classical and other versions of knot theory, the homotopy type of the double branched cover of the knot provides important information. Since the double branched cover of an exotic plane is homeomorphic to $\R^4$, it provides no homotopy-theoretic invariants. However, an unpublished example of Freedman (later expanded by the author and exhibited in \cite{menag}) showed that such a branched cover need not be diffeomorphic to $\R^4$. Its diffeomorphism type can then be used as an invariant. The well-developed theory of exotic {\em $\R^4$-homeomorphs} (oriented diffeomorphism types homeomorphic to $\R^4$) can now be used to establish a theory of such exotic planes, obtaining the family in (a) and further results in this paper. For example, Section~\ref{Group} exhibits exotic planes (both simple and otherwise) with various discrete group actions, some uncountable, that inject into the pairwise mapping class group. Other exotic planes $P$ have large group actions near the end, whose nontrivial elements cannot extend over the entire pair $(\R^4,P)$. For some of the global actions, each compact subset of $\R^4$ has infinitely many pairwise disjoint images. In contrast (Theorem~\ref{sum}) there is an exotic plane $P'$ and a compact subset $K$ of $\R^4$ such that no pairwise diffeomorphism of $(\R^4,P')$ sends $K$ into $\R^4-K$.

We attempt to organize the set of proper 2-knots with a relation: For two such knots $F_1$ and $F_2$, we write $F_1\le F_2$ if there is a (nonproper) embedding of $\R^4$ into itself sending $F_1$ onto $F_2$. We call $F_1$ and $F_2$ {\em equivalent} if $F_1\le F_2\le F_1$, obtaining equivalence classes comprising a partially ordered set. Perhaps surprisingly, this yields rich structure. Let $\Sigma\subset I$ be obtained from the standard Cantor set by removing the upper endpoint of each of the deleted middle thirds. Then $\Sigma$ has the cardinality of the continuum. Partially order $\Sigma\times\Sigma$ so that $(s_1,s_2)\le(t_1,t_2)$ means $s_i\le t_i$ for each $i$. In Section~\ref{planeFamilies} we prove:

\begin{thm}\label{order}
(a) The exotic planes of Theorem~\ref{0g}(a) are all equivalent to the standard plane.

(b) There is an uncountable set of equivalence classes of exotic planes with the order type of $\Sigma\times\Sigma$.

(c) There is an uncountable set of equivalence classes of exotic planes with the order type of $\Sigma$ such that each class has uncountably many distinct elements.
\end{thm}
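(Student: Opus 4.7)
The key tool is the double branched cover correspondence. Each proper 2-knot $F\co\R^2\emb\R^4$ yields the double cover of $\R^4$ branched over $F(\R^2)$, an $\R^4$-homeomorph by the Freedman theory alluded to before the theorem. A self-embedding $\phi\co\R^4\emb\R^4$ with $\phi(F_1(\R^2))=F_2(\R^2)$ lifts to a smooth embedding of the corresponding branched covers respecting the branch loci, so the relation $\le$ on planes refines the embedding partial order on the associated $\R^4$-homeomorphs. My plan is to build the required families of planes so that this induced order realizes the combinatorial orders on $\Sigma\times\Sigma$ and $\Sigma$, and to verify non-embedding by invariants of the branched cover.

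For part (a), the hypothesis that $F$ is generated by unknots gives, for every compact $K\subset\R^4$, a disk $D\subset\R^2$ with $F^{-1}(K)\subset D$ and an unknotted closing sphere $S=F(D)\cup D'$, $D'\subset\R^4-K$. Each such sphere is smoothly standard in $\R^4$, so a neighborhood of $S$ sits inside a smoothly embedded standard 4-ball, with $F(D)$ appearing as a standard disk in the boundary 3-sphere. Taking a nested exhaustion of $F(\R^2)$ by such spheres, with the caps $D'$ retreating to infinity, yields a chain of standard 4-balls whose union is an open neighborhood of $F(\R^2)$ diffeomorphic to $\R^4$ under which $F(\R^2)$ corresponds to the standard plane. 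The same data, read in either direction, gives the two self-embeddings of $\R^4$ needed to conclude $F\sim F_0$.

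For parts (b) and (c), I would use the branched-cover construction of Section~\ref{Branch} together with the Taylor--Gompf-style families of small exotic $\R^4$-homeomorphs $\{R_s\}_{s\in\R}$, which come equipped with natural smooth embeddings $R_s\emb R_t$ for $s\le t$. For (b), I produce planes whose branched covers carry two essentially independent pieces of exotic behavior --- for instance by end-summing exotic $\R^4$-homeomorphs into two separate regions near the end of $F$ --- so that each plane receives an invariant pair $(s,t)\in\Sigma\times\Sigma$ with the embedding order matching the product order. For (c), I exploit that a single $\R^4$-homeomorph can serve as the branched cover of uncountably many pairwise non-isotopic planes, by varying the branch data (for example, twisting the plane inside a fixed branched cover by self-diffeomorphisms not isotopic to the identity) while preserving the equivalence class under $\le$. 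Parameterizing the cover over $\Sigma$ and the planes-inside over a second uncountable set gives a chain of equivalence classes each with uncountably many representatives.

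The affirmative direction of each order relation is immediate from the explicit embeddings among Taylor-family $\R^4$'s lifted by cut-and-paste in the branched cover. The principal obstacle is the negative direction: for incomparable parameters, one must rule out any self-embedding of $\R^4$ carrying one plane onto another. This requires invariants of proper 2-knots monotone under $\le$, not merely under smooth isotopy. I would obtain them by lifting to the branched cover and applying smooth-structure invariants of its end (the Taylor $\gamma$-invariants or Casson-handle-based invariants reinterpreted in Section~\ref{CH}); verifying that these are genuinely monotone under $\le$ and distinguish incomparable parameter values is where the bulk of the technical work lies.
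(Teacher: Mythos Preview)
Your general framework is right---the branched cover homomorphism from planes to $\R^4$-homeomorphs carries the order $\le$ to the embedding order---but several of your key steps diverge from the paper's proof in ways that create genuine gaps.

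For (a), the paper does not argue via ``generated by unknots''; that property is established only later (Section~\ref{DrawPlanes}). Instead, the proof uses the explicit construction: each simple plane $P$ arises as the $r_y$-branch locus inside a small exotic $R$ built by attaching Casson handles, and embedding each Casson handle standardly into a 2-handle gives an $r_y$-equivariant embedding $R\emb\R^4$ that carries the branch locus onto itself, directly exhibiting $P\le\R^2$. Your nested-ball argument has a gap: from an unknotted cap $S_n=F(D_n)\cup D'_n$ you can find a $4$-ball in which $S_n$ is standard, but you have not arranged that $(B_n,P\cap B_n)$ is a standard ball--disk pair (rather than $F(D_n)$ merely lying on $\partial B_n$), nor that the balls nest pairwise standardly. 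Without this, the union need not model $(\R^4,\R^2)$.

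For (b) and (c), you invoke \emph{small} exotic $\R^4$-homeomorphs, but the paper's argument requires the \emph{large} family $R^*_x$ (parametrized by $\Sigma$ after taking care that the quotients $R^*_G$ are standard $\R^4$). The non-embedding direction comes from the periodic end theorem of Taubes applied to definite manifolds, which needs the large-$\R^4$ setup; the Taylor invariant you mention is not what is used here. The $\Sigma\times\Sigma$ order in (b) is obtained specifically via $P_s\natural\overline{P_t}$, where the ambient orientation reversal on the second summand is essential for independence (so that each factor is detected by a definite manifold of the appropriate sign). Your ``two independent pieces of exotic behavior'' is the right intuition but misses this orientation mechanism.

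For (c), your proposed mechanism---varying the branch locus inside a fixed cover by non-isotopic self-diffeomorphisms---is not what the paper does and is not justified. The paper instead end-sums each large $P_s$ with simple planes from (a): since the latter are equivalent to the standard plane, they do not change the equivalence class, but they do change the diffeomorphism type of the branched cover (via the end-periodic h-cobordism argument applied to $R_s\natural R$ with $R$ small), yielding uncountably many planes per class.
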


The simple exotic planes presented in (a) of the two previous theorems seem quite different from the other planes of Theorems~\ref{ginfty} and \ref{order}, simpler in ways besides their vanishing $g^\infty$ and equivalence to the standard plane. For example, they are simple enough that we can draw them explicitly. In Section~\ref{Diagrams}, which can mostly be read after Sections~\ref{Satellites}--\ref{Casson} and \ref{Branch1} (the latter needed for planes but not annuli), we draw them as level diagrams (movies) using a proper Morse function given by distance to a generic point. For Theorem~\ref{0g}(a) and each choice of $\kappa$ in (b), we explicitly draw such an example, and describe the other members of the uncountable family up to unspecified ramification.  In each case, we obtain a ribbon surface, with local minima successively appearing as the radius function increases, and each eventually being connected to the rest by a ribbon (saddle point). Notably, we do not need any local maxima. The only difference between these diagrams and the more familiar diagrams of compact ribbon surfaces is that in our case, the process never terminates. (If it did, the topologically standard annulus would be smoothly standard, cf.~text preceding Definition~\ref{0framing}, hence it would be unique and with $g=0$.) Our simplest example is Figure~\ref{plane}. To interpret the figure, consider the recursively defined tangles $\alpha_n$, $n=0,1,2,\dots$, for which the thick curves represent bunches of $2^{n+1}$ strands as indicated, parallel in the plane of the paper (except where subject to the two indicated full left twists). Thus, $\alpha_n$ has $2^{n+2}$ strands exiting the top of the box, and the same number exit the bottom. The lower diagrams show how the plane intersects 3-spheres of radius $r\in\Z^+$. The first diagram with $n=0$ shows a ribbon link in the sphere of radius 1. One component is obtained by connecting the top and bottom of $\alpha_0$ by four arcs  in the manner of a braid completion, while threading through a 4-component unlink. There is an obvious ribbon move between the two outermost arcs of the completion, allowing us to interpret the diagram as six 0-handles (local minima) at some radius less than 1, with one pair connected by a 1-handle appearing at radius 1. Then at radius 2 (middle diagram), four more 1-handles connect the knot to the four linking circles while threading through the boundary circles of eight more 0-handles. The resulting diagram has the same form as the first, with $n$ incremented by 1 (right diagram). The diagrams continue recursively. The fact that this is an exotic plane arises from a general construction in Sections~\ref{Branch} and~\ref{Diagrams}. However, as a check, we also show directly that it is topologically standard (Remark~\ref{pi1}(b)). The fact that the plane is exotic is more difficult to prove, but perhaps more believable.

\begin{figure}
\labellist
\small\hair 2pt
\pinlabel {$\alpha_{n+1}$} at 255 49
\pinlabel {$\alpha_n$} at 105 49
\pinlabel {$\alpha_n$} at 10 49
\pinlabel {$\alpha_0=$} at 37 117
\pinlabel {$\alpha_n=$} at 165 117
\pinlabel {$\alpha_{n-1}$} at 208 132
\pinlabel {$-2$} at 84 111
\pinlabel {$-2$} at 218 103
\pinlabel {$-2$} at 152 8
\pinlabel {$2^{n+2}$ strands} at 68 22
\pinlabel {$2^{n+1}$ strands} at 280 117
\pinlabel {$2^{n+2}$ strands} at 217 22
\pinlabel {$2^{n+3}$ strands} at 312 22
\pinlabel {$\approx$} at 230 40
\pinlabel {$r=n+1$} at 20 -5
\pinlabel {$r=n+2$} at 230 -5
\endlabellist
\centering
\includegraphics{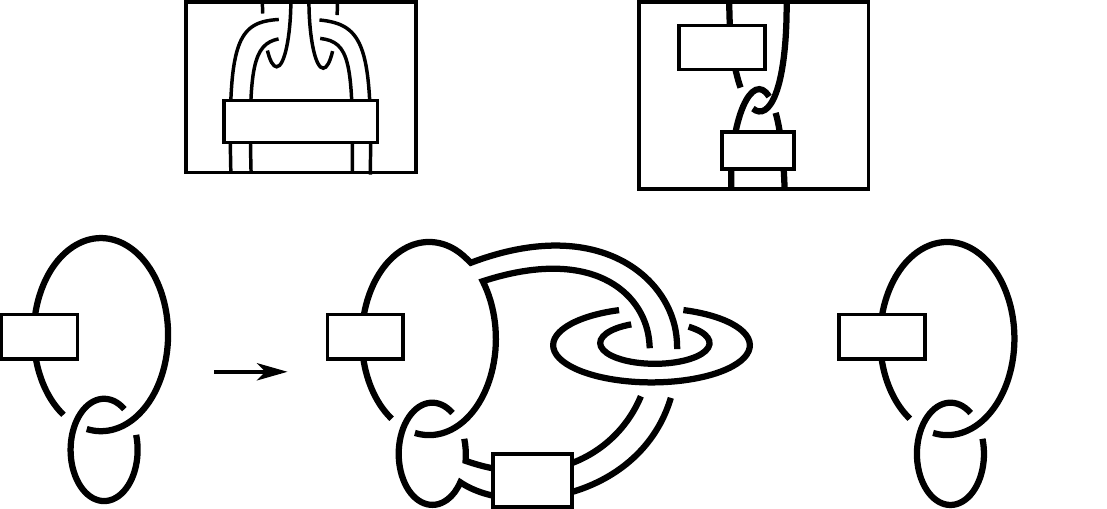}
\caption{An exotic plane from Theorem~\ref{0g}(a), as an infinite, recursive level diagram. The thick curves represent bunches of the indicated numbers of parallel strands.}
\label{plane}
\end{figure}

Unlike the examples of Theorem~\ref{0g}, the other examples of Theorems~\ref{ginfty} and \ref{order} are created by an infinite process with poorly controlled, superexponentially growing complexity, so extracting an explicit diagram seems intractable. (More specifically, the infinite nature of the constructions of Theorem~\ref{0g} comes from Casson handles, which can be described concretely. The other constructions involve topologically embedded surfaces, which ultimately arise from intersections of complicated infinite nestings of Casson handles; see Section~\ref{AlmostSmooth}.) In contrast to the previous paragraph, any level diagrams of these more complicated exotic planes would require infinitely many local maxima (Scholium~\ref{ginftySch} and Corollary~\ref{inf2h}, respectively). In some cases, the number of components of the superlevel sets $r^{-1}[a,\infty)$ must become arbitrarily large as the radius $a$ increases (Scholium~\ref{infEnd}), whereas the superlevel sets in the previous paragraph are connected.

We can invert our viewpoint to get a discussion of isolated singularities of embedded surfaces. A smooth annulus in $\R^4$ is topologically standard if and only if the disk made by compactifying at infinity is {\em locally flat}, so locally it is pairwise homeomorphic to a smoothly embedded surface. The annulus is smoothly standard if and only if the singularity is trivial under an equivalence relation that we call {\em almost-smooth isotopy}, topological (ambient) isotopy that is smooth except at the singular point. This relation preserves $g$ and $\kappa$ of the singularity. Any isolated singularity in an otherwise smooth surface in a 4-manifold locally admits a level diagram from the radius function at the singularity, which we can assume is Morse elsewhere on the surface. This diagram is obtained from one of the corresponding annulus by inverting the radial coordinate, so it typically fails to terminate with decreasing radius. Such diagrams can be varied in the usual way by almost-smooth isotopy. We use this viewpoint to study the structure of singularities of compact surfaces: A known corollary of Freedman \cite{F} and Quinn \cite{Q} (see Corollary~\ref{cpctSurf} below) is that locally flat (topologically embedded) surfaces can always be topologically isotoped to be smooth except at a point. The minimum genus and kinkiness of such singularities were addressed by the author in \cite[Theorems~6.2 and 8.4]{MinGen}. The present paper further elucidates the complexity of such singularities at the end of Section~\ref{Taylor}:

\begin{thm}\label{sing} Every compact, locally flat surface $F$ in a smooth 4-manifold is topologically (ambiently) isotopic to a surface $F'$ that is smooth except at a singular point $p$ at which every level diagram requires infinitely many local minima. This singularity can be chosen so that
\begin{itemize}
\item[a)] $g=\max\{\kappa_+,\kappa_-\}$, realizing any preassigned, sufficiently large $\kappa_\pm$ (finite or infinite), or so that
\item[b)] $g$ is infinite, and for each $m\in\Z^+$ there is a neighborhood $U$ of $p$ such that every integral homology 4-ball $B$ with $p\in\inter B\subset U$ and $\partial B$ transverse to $F'$ intersects $F'$ in at least $m$ components.
\end{itemize}
If $F$ is smooth, the singularity of $F'$ can be chosen to realize any nonzero $\kappa$ in (a), or alternatively, to have no local minima and $g=\kappa_\pm=0$ but still not be almost-smoothly isotopic to a smooth surface.
\end{thm}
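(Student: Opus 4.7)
The plan is to leverage the correspondence, exploited throughout the paper, between isolated singularities of surfaces in 4-manifolds and the behavior at infinity of proper 2-knots. Given $F$ compact and locally flat in a smooth 4-manifold $W$, begin by applying Corollary~\ref{cpctSurf} (the Freedman--Quinn consequence) to topologically isotope $F$ to a surface smooth except at one point $p$. Near $p$, $F$ is pairwise identified, by a homeomorphism smooth off $p$, with the standard pair $(\R^4,\R^2)$ near the origin. To install a chosen singularity, select an exotic plane $P\subset\R^4$ with the desired invariants at infinity, smoothly one-point compactify to obtain an almost-smooth disk $D\subset S^4$ with its singularity at $\infty$, and graft a neighborhood of $\infty$ in $(S^4,D)$ into our coordinate chart using the topological standardness of $P$. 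The resulting surface $F'\subset W$ is smooth off a single new singular point $p$ and topologically ambiently isotopic to $F$, with $g(F',p)=g^\infty(P)$ and $\kappa_\pm(F',p)=\kappa_\pm^\infty(P)$.

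Under the grafting, the radial coordinate $r$ near $p$ corresponds to $1/r$ near $\infty$ in $\R^4$, so local minima of level diagrams of $F'$ at $p$ correspond to local maxima in level diagrams of $P$. If some level diagram at $p$ had only finitely many local minima, then outside a sufficiently large compact set of $\R^4$ the plane $P$ would admit a proper Morse function without local maxima, forcing an annular end of $P$ to be smoothly standard and contradicting $g^\infty(P)>0$. Hence whenever $g^\infty(P)>0$, every level diagram at $p$ requires infinitely many local minima. For (a), take $P$ from Theorem~\ref{ginfty} realizing the preassigned $\kappa^\infty=\kappa$ with $g^\infty=\max\{\kappa_\pm\}$; any sufficiently large (nonzero) $\kappa$ gives $g^\infty(P)>0$ and hence the desired singularity structure.

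For (b), take an exotic plane $P$ with $g^\infty(P)=\infty$ whose far superlevel sets $P\cap r^{-1}[a,\infty)$ have an unbounded number of components as $a$ grows (Scholium~\ref{infEnd}). For given $m$, pick $a$ large enough and let $U$ be the image of $\{r>a\}\cup\{\infty\}$ in $W$ under our grafting. Then $F'\cap U$ decomposes, off $p$, into at least $m$ smooth components each limiting to $p$. For a smooth integral homology 4-ball $B$ with $p\in\inter B\subset U$ and $\partial B$ transverse to $F'$, we claim $B\cap F'$ has at least $m$ components. The idea is that the complement $B-\{p\}$ carries a restricted topological type (via Freedman's topological Poincar\'e theorem applied to $\partial B$ and the homology-ball hypothesis), and the $m$ separating structures of $P$ at infinity persist into $B$ as topologically distinct pieces; merging two of them inside $B$ would produce a smooth cobordism of compact surfaces in $B$ incompatible with the simply-connectedness of $\R^4-P$ and the Casson-handle nesting structure used to construct $P$.

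The final clause covers the smooth $F$ case. For any nonzero $\kappa$, the argument for (a) applies directly without the preliminary topological isotopy. For the case with $g=\kappa_\pm=0$ and no local minima, graft in a simple exotic plane from Theorem~\ref{0g}(a): these have $g^\infty=0$ and explicit level diagrams containing no local maxima (as in Figure~\ref{plane}), producing a singularity with no local minima and $g=\kappa_\pm=0$. Exoticness of the grafted $P$ prevents an almost-smooth isotopy of $F'$ to a smooth surface, since such an isotopy would restrict off $p$ to a smooth ambient isotopy of $P$ onto the standard plane in $\R^4$. The main technical obstacle I anticipate is the homology-ball statement in (b): converting the metric-dependent growth of superlevel-set components into an intrinsic lower bound valid for arbitrary smooth integral homology 4-balls requires care, combining the simply-connectedness of $\R^4-P$ with rigidity of the Casson-handle structure to rule out smooth merging of the local components under a homology-ball replacement.
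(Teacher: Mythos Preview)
Your overall grafting strategy matches the paper's approach for the smooth case and for (b), but the central step---deducing that the singularity requires infinitely many local minima---has a genuine gap. You argue that if the level diagram at $p$ had only finitely many local minima, then $P$ would have no local maxima beyond some radius, ``forcing an annular end of $P$ to be smoothly standard and contradicting $g^\infty(P)>0$.'' The implication ``no local maxima $\Rightarrow$ smoothly standard end'' is false: the exotic annuli of Theorem~\ref{0g}(b) (drawn explicitly in Section~\ref{Diagrams}) have no local maxima whatsoever yet are not smoothly standard. More to the point, you have given no argument linking the absence of local maxima to $g^\infty=0$, and none is available directly. The paper's Scholium~\ref{ginftySch} does eventually show that the Theorem~\ref{ginfty} planes require infinitely many local maxima, but this is a \emph{consequence} of the proof of Theorem~\ref{sing}, not an input to it.

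The paper's actual mechanism is quite different and worth noting. It arranges the singularity to be a Freedman core disk $D$ of a specific Casson handle $CH_F$, then observes that the exotic plane $P$ from Theorem~\ref{sum} (whose branched cover $R_x^*$ has positive Taylor invariant, so $P$ requires infinitely many local maxima by Theorem~\ref{infLocMax}) is a satellite on the annulus $D-\{p\}$ with an explicit pattern. If $D$ had only finitely many local minima near $p$, the satellite $P$ would have only finitely many local maxima near infinity, contradicting Corollary~\ref{inf2h}. So the argument runs through the Taylor invariant of the branched cover, not through $g^\infty$. Your sketch for (b) is also too vague: the paper proves the homology-ball statement via a branched-cover Betti-number computation (Scholium~\ref{infEnd}), not via the Casson-handle rigidity you gesture at.
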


The proof (Section~\ref{Taylor}) actually shows that the almost-smooth surfaces arising from Freedman's construction (with sufficient ramification) always require infinitely many local minima. It proceeds by immediately smoothing $F$ near its 1-skeleton and reducing to the case of the core disk of a Casson handle (whose definition we review in Section~\ref{Casson}). It follows that the theorem applies more generally to smoothing embedded 2-complexes, creating a singularity as above on each 2-cell. The cases with $g\ne0$ can alternatively be proved using generalized Casson handles with embedded surface stages, the technology needed by the author for \cite{steintop}. Corollary~6.1 of that paper showed that any 2-complex tamely topologically embedded in a complex surface is topologically isotopic to one with an uncountable system of Stein neighborhoods (so a finite complex becomes a ``Stein compact"). It follows that the resulting 2-cells typically must have singularities requiring infinitely many local minima (and can be chosen with $g$ and $\kappa_\pm$ arbitrarily large, although the condition on homology balls does not follow in the Stein setting).

This paper is organized as follows: After discussing our basic tools in Section~\ref{Tools}, we prove most of Theorem~\ref{ginfty} in Section~\ref{Context}, exhibiting exotic planes with all nonzero values of $\kappa^\infty$ (and thereby $g^\infty$). This leads into a summary of necessary background from exotic $\R^4$ theory. For context, we also show nonexistence of exotic linear spaces in other dimensions, as well as considering exotic annuli, and briefly discuss the dual problem of smoothing topological submanifolds of 4-manifolds. Section~\ref{Branch} studies exotic planes and other surfaces by the diffeomorphism types of their double branched covers. We obtain uncountably many exotic planes with $g^\infty=0$ as well as with arbitrarily large (finite or infinite) $g^\infty$. Understanding the resulting ends allows us to prove Theorem~\ref{sing} on singularities of almost-smooth surfaces. We also discuss exotic planes with many symmetries (Section~\ref{Group}). In Section~\ref{Diagrams}, we draw explicit exotic annuli and planes, and exhibit some symmetries. Finally, we summarize the behavior of our two types of exotic planes and discuss some open questions (Section~\ref{Questions}). Throughout the text, we work in the setting of oriented, connected, smooth manifolds, except where otherwise specified. Embeddings with positive codimension (only) are assumed to be proper, and in the topological category they are locally flat. Isotopies are implicitly ambient, i.e., we compose the embedding with an isotopy of the ambient space through diffeomorphisms (or homeomorphisms in the topological category). Since all orientation-preserving self-diffeomorphisms of $\R^2$ and $[0,\infty)\times S^1$ are isotopic to the identity, we often abuse notation by conflating embeddings of these spaces with their images. Similarly, pairwise diffeomorphism and isotopy are equivalent for embeddings in $\R^n$, and analogously in the topological category.  (To isotope a self-diffeomorphism of $\R^n$ to the identity, first arrange this to first order at 0, then conjugate by a dilation. A similar procedure works in the topological category by first applying the Stable Homeomorphism Theorem to make it the identity near 0 (Kirby--Siebenmann \cite{KS} for $n\ge5$ and Quinn \cite[2.2.2]{Q} for $n=4$).) The symbol ``$\approx$" denotes diffeomorphism (sometimes pairwise).

The author would like to acknowledge the 2019 BIRS  2-knots conference 19w5118, which planted the seed for this paper.

 
\section{Basic tools}\label{Tools}

We begin by assembling some basic tools for proper knots, beginning with a way to distinguish annuli by using them to enlarge the ambient manifold. We then discuss end sums, satellites, Casson handles and isotoping topologically embedded surfaces to become almost-smooth.

\subsection{Distinguishing annuli}\label{Annuli}
One way to distinguish annuli is the following:
\begin{prop}\label{rebuild}
a) For any smooth $n$-manifold $X$ and $k\le n$, there is a canonical bijection between isotopy classes of normally framed annuli $[0,\infty)\times S^{k-1}\emb X$ and manifolds $\hat{X}$ containing $X$ as the complement of a distinguished boundary component identified as $S^{k-1}\times\R^{n-k}$ (up to diffeomorphisms with restriction to $X$ isotopic to the identity).

\noindent b) There is a canonical map from isotopy classes of such framed annuli to manifolds (up to diffeomorphism) obtained by attaching an open $k$-handle to $X$ at infinity as defined below.
\end{prop}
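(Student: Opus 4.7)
The plan is to prove (a) by constructing the bijection in both directions using the tubular neighborhood and collar neighborhood theorems, and then to obtain (b) by attaching an open $k$-handle to the distinguished boundary component produced by (a).

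Forward map (framed annulus to $\hat X$): Given a normally framed proper annulus $A=[0,\infty)\times S^{k-1}\emb X$, the framing trivializes the normal bundle, so the tubular neighborhood theorem for proper embeddings yields an open neighborhood $N\subset X$ with a diffeomorphism $N\to[0,\infty)\times S^{k-1}\times\R^{n-k}$ carrying $A$ to the zero section. I would then reparameterize via an orientation-reversing diffeomorphism $[0,\infty)\to(0,1]$ sending $0\mapsto1$, obtaining $N\cong(0,1]\times S^{k-1}\times\R^{n-k}$, and glue the model $[0,1]\times S^{k-1}\times\R^{n-k}$ onto $X$ along $N$ via the identity on the overlap. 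The result $\hat X$ is a smooth manifold containing $X$ as an open subset, with distinguished new boundary component $\{0\}\times S^{k-1}\times\R^{n-k}$.

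Inverse map ($\hat X$ to framed annulus): The collar neighborhood theorem provides an open collar $[0,1)\times S^{k-1}\times\R^{n-k}$ of the distinguished boundary component in $\hat X$. Intersecting with $X$ and extracting the section $(0,1]\times S^{k-1}\times\{0\}$, reparameterized via $(0,1]\to[0,\infty)$, gives a proper annulus in $X$; the remaining $\R^{n-k}$-directions of the collar supply the normal framing. Standard uniqueness statements—framed tubular neighborhoods are unique up to ambient isotopy, and collars are unique up to ambient isotopy rel boundary—show that both maps descend to equivalence classes, and it is immediate from the construction that they are mutual inverses: the model piece glued in by the forward map itself serves as a collar for $\hat X$, and the annulus it produces under the inverse map is $A$. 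For (b), one then attaches an open $k$-handle $D^k\times\R^{n-k}$ to $\hat X$ by identifying $\partial D^k\times\R^{n-k}=S^{k-1}\times\R^{n-k}$ with the distinguished boundary; any diffeomorphism of $\hat X$ whose restriction to $X$ is isotopic to the identity can be composed with an isotopy supported near the boundary to match the identifications, so this extends across the attached handle, making the resulting manifold canonically associated to the isotopy class of the framed annulus.

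The chief technical hurdle is verifying the uniqueness theorems in the noncompact, proper setting, so that the model $[0,\infty)\times S^{k-1}\times\R^{n-k}$ arises globally and not merely in a fiberwise bounded thickening. Since $\R^{n-k}$ is diffeomorphic to $\inter D^{n-k}$, a small framed tubular neighborhood (which the standard theorem supplies) can be fiberwise rescaled to the desired form, and a parallel issue must be handled when showing that ambient isotopies of $X$ extend across the end compactification to $\hat X$. Both steps are routine given the proper hypothesis, but must be checked explicitly so that the correspondence genuinely descends to isotopy classes in the noncompact category.
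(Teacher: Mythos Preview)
Your proposal is correct and follows essentially the same approach as the paper: both directions use the tubular/collar neighborhood of the annulus or boundary component, glue a model $[0,\infty)\times S^{k-1}\times\R^{n-k}$ (the paper parameterizes it as $(-1,\infty]\times S^{k-1}\times\R^{n-k}$), and invoke uniqueness of tubular neighborhoods and collars for well-definedness; part (b) is then the obvious handle attachment along the distinguished boundary. The only difference is that you present the framed-annulus-to-$\hat X$ direction first and are more explicit about the noncompact tubular neighborhood technicalities, whereas the paper leaves these as ``easily seen.''
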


\begin{proof}
In (a), the distinguished boundary component of $\hat{X}$ has a tubular neighborhood $(-1,\infty]\times S^{k-1}\times\R^{n-k}$. The required framed annulus in $X$ is given by $[0,\infty)\times S^{k-1}\times\{0\}$. For the reverse correspondence, glue such a tubular neighborhood of a boundary component onto $X$ using the obvious identification of its interior with a neighborhood of the annulus in $X$. These correspondences are easily seen to be well-defined inverses up to the given equivalences. We can now add a $k$-handle at infinity for (b): Use the new boundary of $\hat{X}$ with the given framing to attach an open $k$-handle $D^k\times\R^{n-k}$, or equivalently, identify a tubular neighborhood of the attaching region in the handle with the framed neighborhood of the annulus in $X$.
\end{proof}

Handles at infinity, which were used by the author in \cite{MfdQuot}, are more general than would be expected by considering interiors of compact handlebodies. This is because $\hat{X}$ typically cannot be compactifed by adding more boundary, as the fundamental group behavior of $X$ at infinity sometimes shows. For example, if $\R^4$ is exhibited as the interior of a compact manifold, the boundary must be simply connected and hence diffeomorphic to $S^3$. If this contains the boundary of $\hat{X}$ for some topologically standard annulus in $\R^4$, the resulting circle in $S^3$ must have knot group $\Z$, so it bounds a disk $D$ in $S^3$. Then the annulus lies in the boundary of $[0,\infty)\times D$ in $\R^4$, so it is smoothly standard. In particular, exotic annuli as in this paper can never arise as interiors of compact pairs. However, we can still canonically keep track of framings as we do for knots in $S^3$:

\begin{de}\label{0framing}
The {\em 0-framing} of an annulus in $\R^4$ is the unique normal framing for which attaching a 2-handle at infinity gives a manifold with vanishing intersection pairing. Equivalently, it is the unique framing that extends over any embedded surface generating the annulus.
\end{de}

\subsection{End sums}\label{Sums}
The {\em end sum} operation consists of connecting two manifolds by a 1-handle at infinity, so that their orientations agree. This was analyzed in detail by Calcut and the author \cite{CG} (expanding on the author's earlier work in \cite{infR4}): The operation is well-defined on diffeomorphism types in dimensions $n\ge 4$ when (for example) the ends are simply connected, by uniqueness of the defining rays up to isotopy. (In contrast, summing a pair of one-ended manifolds with complicated fundamental group structure at infinity can even result in uncountably many diffeomorphism types, as shown by Calcut, Guilbault and \ Haggerty \cite{CGH}.) There is a natural identification of the end sum $X^n\natural\R^n$ with $X^n$ that is the identity outside a neigborhood of the ray in $X$. This extends the operation to sums of countably infinite collections, where we sum each onto $\R^n$ using an infinite collection of disjoint rays in the latter. It is then independent of the order of the summands and grouping -- commutativity and associativity in the infinite setting. We now turn this into an operation for studying proper 2-knots that is analogous to the connected sum of classical knots. For any two embedded noncompact surfaces $F_i\subset X^4_i$, we can choose a ray in each $F_i$ and perform the sum pairwise, respecting all orientations, to get a new pair $(X_1\natural X_2,F_1\natural F_2)$. This is well-defined on diffeomorphism types of pairs whenever the end of each $F_i$ annular (i.e.~$F_i$ has one end and finite genus), since the rays are then unique up to pairwise isotopy. (Without annularity, there are examples with all relevant manifolds one-ended, but the resulting 4-manifolds nonunique. For example, the rays used in \cite{CGH} can be assumed to lie on a surface of infinite genus. See Questions~\ref{uniqueSum} for related issues.) In general, we should not expect the isotopy class of $F_1\natural F_2\subset X_1\natural X_2$ to be uniquely determined by the isotopy classes of the summands. (Already in the simpler setting of pairwise connected sums of circles in tori, the result changes under $2\pi$-rotation of the disk.) But when $X_2$ is $\R^4$ and the end of each $F_i$ is annular, the end sum $F_1\natural F_2\subset X_1\natural\R^4=X_1$ is well-defined on isotopy classes since it just inserts $F_2$ into $F_1$ near the isotopically unique ray. We can form a countable sum $\natural_{i=1}^N (X_i,F_i)$ of pairs for any $N\in\{0,1,2,\dots,\infty\}$ by summing each into $(\R^4,\R^2)$, along a collection of $N$ disjoint rays in $\R^2$ indexed by positive integers. (Then $N=0$ returns $(\R^4,\R^2)$.) In Section~\ref{Group} it will be useful to allow collections of rays that densely fill regions, such as $[0,\infty)\times\Q\subset\R\times\R$.

\begin{prop}\label{infSum}
The end sum $\natural_{i=1}^N (X_i,F_i)$ of noncompact surfaces $F_i\subset X^4_i$ can be defined using any collection of $N$ disjoint rays $\gamma_i$ in $\R^2$. If the end of each $F_i$ is annular, the diffeomorphism type of the sum depends only on the diffeomorphism types of the pairs $(X_i,F_i)$. In particular, it is independent of the order. Iterated sums of such pairs $(X_i,F_i)$ can equivalently be performed simultaneously if the resulting genus is finite.
\end{prop}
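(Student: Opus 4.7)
My plan is to reduce the proposition to uniqueness (up to pairwise ambient isotopy) of the various rays involved in the construction, and then invoke the well-definedness of ordinary end sums from \cite{CG}. The annularity hypothesis is precisely what makes this reduction feasible, since it supplies a product structure near each end in which the chosen ray can be identified.

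First, when $F_i$ has annular end, triviality of the normal bundle over an end of an oriented surface in an oriented $4$-manifold gives a pairwise product neighborhood $[0,\infty)\times S^1\times\R^2$ in which $F_i$ appears as $[0,\infty)\times S^1\times\{0\}$. Any proper ray in $F_i$ going to infinity can be pairwise isotoped into such a neighborhood and then straightened to $[0,\infty)\times\{p\}\times\{0\}$, and two such straightened rays are related by a pairwise isotopy rotating the $S^1$ factor. So the defining ray in each $F_i$ is unique up to pairwise ambient isotopy. On the ambient side, any finite collection of $N$ disjoint proper rays in $\R^2$ is ambiently isotopic through disjoint proper rays to a configuration of radial half-lines going to infinity in distinct angular directions, and such configurations can be permuted freely by a rotation at infinity, which yields the order-independence.

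With these uniqueness statements in hand, the finite case follows directly from the pairwise version of \cite{CG}: the end of $(\R^4,\R^2)$ is simply connected in the relevant pairwise sense, so the end sum along the chosen rays is well-defined on diffeomorphism types of pairs, and the isotopy uniqueness forces this diffeomorphism type to depend only on the isotopy classes of the pairs $(X_i,F_i)$. For the iterated-equals-simultaneous clause, the finite-genus hypothesis guarantees that each partial iterated sum again has annular end, so the next summand can be attached along a ray pairwise isotopic to one of the parallel rays used in the simultaneous construction; comparing the two constructions via this common ray system gives a pairwise diffeomorphism.

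The main obstacle is the infinite case, in which the allowed collections of rays in $\R^2$ may even accumulate densely (as needed for the applications in Section~\ref{Group}). I would handle this by an exhaustion argument: write $\R^2=\bigcup_n D_n$ as an increasing union of compact round disks and, on each annular shell $D_{n+1}\setminus\inter D_n$, use a compactly supported pairwise isotopy to match the finitely many rays that enter the shell before exiting, together with the corresponding pieces of the summands being attached along them. The locally finite concatenation of these isotopies is a pairwise self-homeomorphism of the sum that is a diffeomorphism by construction, which establishes well-definedness and order-independence in general. The annularity hypothesis is essential at this step, since it supplies disjoint pairwise product neighborhoods in which the local ray-identifications can be performed independently without disturbing the other summands.
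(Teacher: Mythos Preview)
There is a genuine gap in your order-independence argument. Your claim that radial ray configurations in $\R^2$ ``can be permuted freely by a rotation at infinity'' is false: an orientation-preserving self-diffeomorphism of $\R^2$ preserves the cyclic order of directions at infinity, so for $N\ge 3$ you only obtain cyclic permutations of the rays this way. Thus your argument, as written, does not show that summing $F_1,F_2,F_3$ along rays at angles $\theta_1<\theta_2<\theta_3$ agrees with summing $F_2,F_1,F_3$ along the same rays. The paper's mechanism is different and uses annularity in an essential way you have not captured: after summing in $F_1$ along $\gamma_1$, the resulting surface $\R^2\natural F_1$ still has an annular end, realized by an explicit annulus $A_1$ agreeing with polar coordinates on $\R^2-\nu_1$. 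An isotopy supported near $A_1$ (and preserving its radial coordinate) can then carry the next ray $\gamma'_2$ to $\gamma_2$, allowing it to pass \emph{over} the already-attached summand $F_1$. Iterating this is what yields arbitrary permutations; the annularity of each $F_i$ is used to rebuild the annular end after each attachment, not merely to supply product neighborhoods for the individual rays.

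For the infinite case, your exhaustion sketch is also incomplete. You assert that only ``finitely many rays enter the shell before exiting,'' but for the dense ray collections you explicitly want to allow (such as $[0,\infty)\times\Q$), every shell meets infinitely many rays. The paper handles this by first truncating $\gamma_i$ to start at radius at least $i$, so the truncated family is a genuine (locally finite) $1$-manifold, and then runs the same inductive $A_n$-argument one ray at a time; the resulting sequence of compactly supported isotopies converges because on any compact set all but finitely many are the identity. Your proposal does not mention the truncation and replaces the inductive annulus argument with a shell-by-shell matching that, without the $A_n$ step, cannot reorder summands across shells.
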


\begin{proof}
Truncate each ray $\gamma_i$ so that its distance to the origin is at least $i$. The union of the rays is then a 1-manifold in $\R^2$. After pairwise isotopy of $(\R^4,\R^2)$, we can further assume the rays are radial, with $\gamma_i$ beginning at radius $i$. It is then easy to define the sum: Find disjoint, pairwise, tubular neighborhoods $\nu_i$ of the rays and identify each $\nu_i$ with $[0,\frac12)\times(\R^3,\R)$ in a copy of $[0,1]\times(\R^3,\R)$, then identify $(\frac12,1]\times(\R^3,\R)$ with a neighborhod of a ray  in $F_i\subset X_i$. If the end of each $F_i$ is annular, these latter rays are unique up to pairwise isotopy, and the resulting sum is easily seen to be independent of all choices except perhaps the order of the terms.

For independence of order, suppose we have another such collection of radial rays $\gamma'_i$. After rotating $(\R^4,\R^2)$, we can assume $\gamma'_1=\gamma_1$. Form the sum $(\R^4,\R^2)\natural(X_1,F_1)=\natural_{i=1}^1 (X_i,F_i)$. Since the end of $F_1$ is annular, there is an annulus $A_1$ given by a proper embedding $[1,\infty)\times S^1\to\R^2\natural F_1$ that agrees with polar coordinates on $\R^2-\nu_1$. After isotopy in a neighborhood of $A_1$ in $\R^4$, preserving the first coordinate of $A_1$, we can assume $\gamma'_2=\gamma_2$. Since the end of $F_2$ is annular, the sum $\natural_{i=1}^2 (X_i,F_i)$ now contains an annulus $A_2$ agreeing with $A_1$ on $[2,\infty)\times S^1$ along $\R^2\natural F_1-\nu_2$. Continuing by induction, we prove independence of order for all $N<\infty$. (The annulus $A_n$ allows $F_{n+1}$ to jump over previous summands as necessary to obtain the required order around $\R^2$.) Since every point has a neighborhood on which all but finitely many of these diffeomorphisms agree, there is a well-defined limiting local diffeomorphism for $N=\infty$ that is easily seen to be bijective.

We can iterate the operation of summing collections as above, possibly infinitely. If the end of each original surface is annular and each partial sum has finite genus, then the partial sums inherit annular ends, and we can assume each required ray for subsequent sums lies in a central $\R^2$.  The final sum then contains multiple central copies of $(\R^4,\R^2)$ end summed according to some tree. The sum of these copies is again diffeomorphic to $(\R^4,\R^2)$, since it can be written as a nested union of standard ball pairs $(B^4,B^2)$. Thus, the original iterated sum is diffeomorphic to a single sum. Since $(\R^4,\R^2)$ is the identity element, an end sum as above with finite $N$ is then diffeomorphic to the corresponding iterated 2-fold sum.
\end{proof}

The set of diffeomorphism types of pairs $(X,F)$ such that the end of $F$ is annular forms a commutative monoid under end sum, with various submonoids such as the (genus-0) proper 2-knots in $\R^4$ and the topologically standard proper 2-knots. Since these two submonoids are closed under infinite sums (although the original monoid is not), they are far from being a group. First, the Eilenberg Swindle (Mazur Trick) shows there are no inverses: If an embedded plane $F\subset\R^4$ has an inverse $F^{-1}$, then $F\approx F\natural\R^2\natural\R^2\natural\dots\approx F\natural(F^{-1}\natural F)\natural(F^{-1}\natural F)\natural\dots\approx (F\natural F^{-1})\natural (F\natural F^{-1})\natural\dots\approx \R^2$ (where $\R^2$ denotes the standard plane in $\R^4$ and the third diffeomorphism is by associativity). Secondly, every homomorphism $\varphi$ from either of these monoids to a group is trivial: For all such surfaces $F$, we have $\varphi(\natural_\infty F)=\varphi(F\natural(\natural_\infty F))=\varphi(F)\varphi(\natural_\infty F)$, so $\varphi(F)$ is the identity. Thus, there can be no useful analog of the knot concordance group for proper 2-knots or exotic planes. However, the double branched cover of a pairwise end sum is the end sum of the corresponding double branched covers. In particular, we obtain a homomorphism from the monoid of topologically standard planes in $\R^4$ to the monoid of diffeomorphism types homeomorphic to $\R^4$, respecting infinite end sums. Infinite end sums are compatible in the obvious way with the equivalence and partial ordering arising for Theorem~\ref{order}, and the ordering corresponds to inclusion of double branched covers. (For a well-defined ordered monoid structure on $\R^4$-homeomorphs, one should descend further to ``compact equivalence" classes defined by setting $R_1\le R_2$ if every compact subset of $R_1$ embeds in $R_2$.)

\subsection{Satellites}\label{Satellites}
Another useful tool is the {\em satellite} construction. Classically, we start with a {\em pattern} $\P$, which is a knot in a solid torus $T=S^1\times D^2$. The corresponding satellite operator replaces a {\em companion} knot $K \subset S^3$ by the satellite knot $\P(K) \subset S^3$ obtained from $\P \subset T$ by identifying $T$ with a tubular neighborhood of $K$ so that the product framing of the core of $T$ corresponds to the 0-framing of $K$ (and all orientations are preserved). For example, if $\P$ is given by the dotted circle in Figure~\ref{double}(a), where $T$ is the complement of the lower circle in $S^3$, identified so that the circles $S^1\times\{p\}$ in $T$ are unlinked from each other in the diagram, $\P(K)$ is called  the positive (untwisted) {\em Whitehead double} $DK$ of $K$. The negative Whitehead double is obtained from the mirror image of $\P$. The result of doubling three meridians of an unknot, one negatively, is shown in (b). The satellite construction has various generalizations to higher dimensions. The most well-known is to take the product of a classical pattern with $I$ and insert it into a tubular neighborhood of a compact annulus embedded rel boundary in a 4-manifold. This shows, for example, that if $K_0$ and $K_1$ are concordant (the boundary components of an annulus in $I\times S^3$ with $K_i\subset\{i\}\times S^3$) then so are $\P(K_0)$ and $\P(K_1)$. This notion immediately generalizes to half-open, proper annuli. For such an annulus in $\R^4$, we canonically identify a tubular neighborhood as a product using the 0-framing of Definition~\ref{0framing}. Aside from doubling annuli in this manner, we will take satellites with other pattern and companion surfaces. An important example is doubling disks. Note that the dotted circle in Figure~\ref{double}(a) is unknotted in $S^3$, so it bounds an unknotted disk $\P$ in the 4-ball $B$ whose boundary is shown. This disk $\P$ can be visualized in the figure as a pair of parallel disks connected by a twisted band (and with interior pushed into $\inter B$). We take this disk as the pattern for doubling a disk $\Delta$ embedded rel boundary in $D^4$, by identifying a tubular neighborhood $N$ of $\Delta$ with $B$ so that $N\cap\partial D^4$ corresponds to $T$ (necessarily inducing the 0-framing). Now (b) of the figure, interpreted 4-dimensionally, shows the result of doubling three normal disks to the unknotted disk in $D^4$ whose boundary is the lower circle. The disks are more easily seen after an isotopy producing (c) (using the fact that the Whitehead link in Figure~\ref{double}(a) is {\em symmetric}, i.e., there is an isotopy interchanging its components).

\begin{figure}
\labellist
\small\hair 2pt
\pinlabel {(a)} at -2 91
\pinlabel {(b)} at 126 91
\pinlabel {(c)} at 39 0
\endlabellist
\centering
\includegraphics{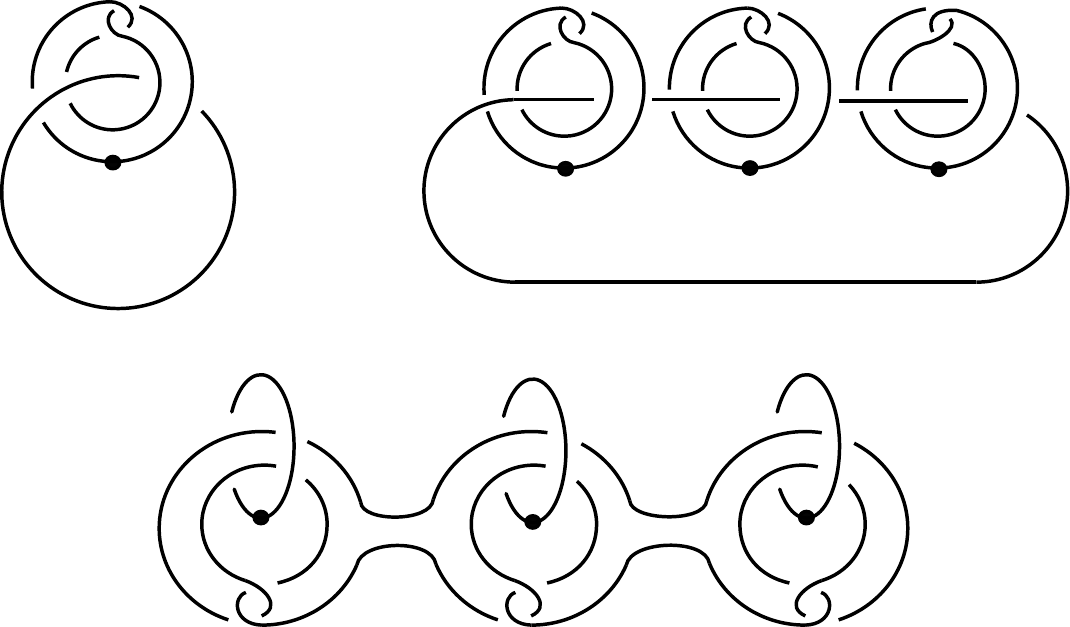}
\caption{Whitehead doubles and kinky handles}
\label{double}
\end{figure}

\subsection{Casson handles}\label{Casson}
We will make extensive use of {\em Casson handles}. These smooth 4-manifolds were first introduced by Casson~\cite{C}, then shown by Freedman~\cite{F} to be homeomorphic (rel boundary) to the open 2-handle $D^2\times\R^2$ as the cornerstone of his classification theorem for simply connected topological 4-manifolds. When Donaldson~\cite{D} showed that the smooth analogue of that theorem is false, it followed immediately that Casson handles are not all diffeomorphic to the open 2-handle -- in fact, the topological core disk of a Casson handle (the homeomorphic image of the core $D^2\times\{0\}\subset D^2\times\R^2$) is typically not topologically isotopic to a smooth disk. But as Freedman observed in his original paper, his proof showed that the topological core could always be assumed isotopic to an almost-smooth disk (as we discuss in Section~\ref{AlmostSmooth}). He also observed that the interior of a Casson handle {\em is} diffeomorphic to $\R^4$, by a simple engulfing argument that we reproduce below (Proposition~\ref{engulf}).

The basic building blocks of Casson handles are {\em kinky handles}. A kinky handle $T_1$ is a compact tubular neighborhood in a 4-manifold of a generically immersed 2-disk, its {\em core}. Equivalently, $T_1$ is made from a trivial disk bundle over the core disk (a 2-handle) by self-plumbing. The boundary of the core is the {\em attaching circle} of $T_1$, and the {\em attaching region} $\partial_-T_1\subset \partial T_1$ is the tubular neighborhood of the attaching circle obtained by restricting the disk bundle (i.e., the attaching region of the plumbed 2-handle). Either description of a kinky handle shows that it is homotopy equivalent to a wedge of $k$ circles, where $k$ is the number of double points of the core. A slightly closer analysis shows that it is diffeomorphic to a boundary sum of $k$ copies of $S^1\times D^3$. This is shown in Figure~\ref{kink} in the case with two positive double points and one negative: Take the product of the pictured genus-3 handlebody with $I$, thinking of the $I$ coordinate as time $t$. The attaching circle is the pictured curve at $t=1$. As $t$ decreases, the core is depicted as a circle in $T_1$ that unknots itself by the obvious homotopy with three self-crossings, and then bounds a disk (local minimum) and disappears. This figure also depicts the kinky handle as a 4-ball with three 1-handles attached, which is described in Kirby calculus by Figure~\ref{double}(c) (or equivalently (b)). In these latter diagrams, the 4-manifold is given as the complement of tubular neighborhoods of the disks of Section~\ref{Satellites} bounded by the dotted circles. The general case is similar. (For more details see, e.g.,~\cite[Chapter~6]{GS}.) Note that every kinky handle is made from $k$ copies of the simplest one (shown in Figure~\ref{double}(a)) by reversing some orientations and pairwise boundary summing. A similar description builds their interiors by pairwise end summing. We often think of a kinky handle as a generalized 2-handle. That is, we glue its attaching region to the boundary of another 4-manifold so that a preassigned framed circle in the latter is identified with the attaching circle $C$ of $T_1$ with the 0-framing as it appears in Figures~\ref{double}~and~\ref{kink}. Note that this framing extends over any embedded surface in $T_1$ with boundary $C$, but does not in general agree with the normal framing of the immersed core disk (which is the blackboard framing in Figures~\ref{double}(c)~and~\ref{kink}, so has coefficient $+2$ in that example). In general, the framing induced by the immersed core has coefficient $2(k_+-k_-)$ in these diagrams, where $k_+$ (resp.\ $k_-$) is the number of positive (resp.\ negative) double points.

\begin{figure}
\centering
\includegraphics{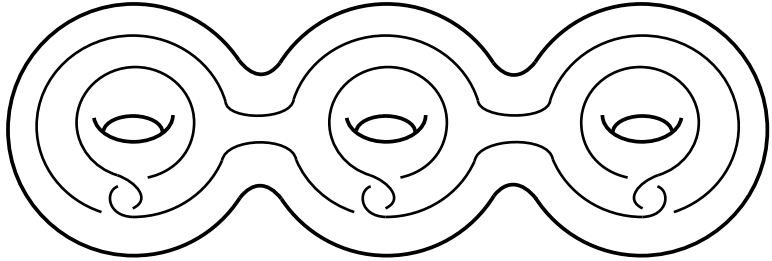}
\caption{A kinky handle: The diagram represents the top boundary of a product with $I$. The core is the obvious immersed disk with three double points bounded by the pictured curve.}
\label{kink}
\end{figure}

A Casson handle is made from an infinite stack of kinky handles. To begin, a {\em 1-stage tower} $T_1$ is a kinky handle. There is an obvious framed link in its boundary, consisting of the 0-framed meridians of the dotted circles in Figure~\ref{double}, for which attaching 2-handles would cancel the 1-handles to yield a 4-ball with an unknotted attaching circle. These 2-handles would fill in the holes of Figure~\ref{kink} in the obvious way. To obtain a {\em 2-stage tower} $T_2$, we instead attach kinky handles to this framed link. As Figure~\ref{double}(b) indicates, a kinky handle is obtained from a 2-handle $H$ (the $k=0$ case) by removing disks. Specifically, we take a {\em ramified double} of the cocore disk of $H$, the satellite operation corresponding to a pattern as in the figure (i.e., doubling parallel copies of the cocore disk), then delete a tubular neighborhood of the resulting disks from $H$. To obtain $T_2$, we apply this procedure to the cancelling 2-handles for $T_1$. The overall result is to replace the dotted disks for $T_1$ by their ramified doubles. Iterating this procedure using the framed links at the top stage kinky handles, we get a sequence of towers $T_1\subset T_2\subset T_3\subset\cdots$. A Casson handle $CH$ is obtained from the infinite union of such a sequence by removing all of its boundary except the open attaching region $\inter\partial_-T_1=\partial CH$. Equivalently, we can assume the neighborhoods of ramified doubles removed at each stage are nested; the Casson handle is then obtained by removing their infinite intersection (and some boundary) from $H$. (At generic points, this intersection is locally a product of $\R^2$ with a Cantor set, appearing in the boundary as a generalized Whitehead continuum.) The almost-smooth core of such a standardly embedded Casson handle $CH\subset H$ is topologically ambiently isotopic in $H$ to the core of $H$ (since the knot group is $\Z$ or by the more direct method of \cite[Theorem~6.2]{MinGen}), so the Casson handle itself is (nonambiently) topologically isotopic rel boundary to the open 2-handle made from $H$ by removing suitable boundary. A Casson handle is completely specified by a based, signed tree with no finite branches. (Each vertex represents a kinky handle, and its signed edges directed away from the base correspond to its double points.)

\begin{de}\label{refinement} A {\em refinement} of $CH$ is a Casson handle $CH'$ whose signed tree contains that of $CH$.
\end{de}

\noindent Any such refinement has a canonical embedding $CH'\subset CH$ with $\partial CH'=\partial CH$, made by ambiently adding new double points and kinky handles (or removing a certain nested intersection). Any two Casson handles have a common refinement, for example, by identifying the base points of the corresponding signed trees.

\begin{prop}\label{engulf} {\rm (Freedman~\cite[Theorem~2.1]{F}.)}
The interior of every Casson handle is diffeomorphic to $\R^4$.
\end{prop}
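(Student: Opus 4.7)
The strategy is to realize $\inter CH$ as an ascending union of smoothly embedded open $4$-balls, each contained in the interior of the next; any such union is diffeomorphic to $\R^4$.

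First I would show by induction on $n$ that each $n$-stage tower $T_n$ is diffeomorphic to a boundary sum $\natural_{k_n}(S^1\times D^3)$ for some $k_n$. The base case is exactly the description of a kinky handle recorded in Section~\ref{Casson}. For the inductive step, each stage-$(n+1)$ kinky handle attached to $T_n$ decomposes handle-theoretically as a $2$-handle (attached along a meridian of one of $T_n$'s top self-plumbing $1$-handles) together with its own new self-plumbing $1$-handles. The $2$-handle cancels the $1$-handle whose meridian it sits on, so after all such cancellations $T_{n+1}$ is again a boundary sum of copies of $S^1\times D^3$.

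The heart of the argument is engulfing: given compact $K\subset\inter CH$, produce a smooth compact $4$-ball $B\subset\inter CH$ with $K\subset\inter B$. Choose $n$ so $K\subset\inter T_n$. Every generator of $\pi_1(T_n)$ (a top-stage self-plumbing $1$-handle loop) becomes null-homotopic in $T_{n+1}$, because its preferred meridian is the attaching circle of a stage-$(n+1)$ kinky handle and is bounded in $T_{n+1}$ by that kinky handle's immersed core. Using the previous step, $T_{n+1}\approx\natural_{k_{n+1}}(S^1\times D^3)$, which embeds smoothly in $S^4$ as a regular neighborhood of an unlink. Inside this embedding any null-homotopic circle bounds a smoothly embedded disk in $T_{n+1}$: take a spanning disk in $S^4$ (available because every $1$-knot in $S^4$ is trivial), then clear its intersections with the complementary $2$-spheres by the Whitney trick, which succeeds since $\pi_1(S^4)=0$. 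Capping the $1$-handles of $T_n$ by these $0$-framed embedded disks turns $T_n$ into a $4$-ball; a tubular thickening of $T_n$ together with these disks is the desired smooth compact $4$-ball $B$ in $\inter CH$.

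Iterating this engulfing on an exhaustion $K_1\subset K_2\subset\cdots$ of $\inter CH$, choosing $B_n$ to engulf $K_n\cup B_{n-1}$ so that $B_{n-1}\subset\inter B_n$, produces nested smooth open $4$-balls whose union is $\inter CH$. Since each inclusion $\inter B_n\hookrightarrow\inter B_{n+1}$ can be arranged after diffeomorphism to look like the inclusion of a round open ball inside a larger concentric one in $\R^4$, the union is $\R^4$, completing the proof. The main obstacle is the engulfing step, since in a general $4$-manifold with nontrivial $\pi_1$ a null-homotopic loop need not bound a smoothly embedded disk; it is essential that Step~1's handle cancellations simplify $T_{n+1}$ to the very tractable form $\natural_{k_{n+1}}(S^1\times D^3)$, in which embedded cancelling disks with the correct $0$-framing can actually be produced.
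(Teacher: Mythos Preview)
Your overall architecture---exhibit $\inter CH$ as a nested union of smooth $4$-balls---matches the paper's, and your Step~1 (each $T_n$ is a boundary sum of copies of $S^1\times D^3$, equivalently a regular neighborhood of a wedge of circles) is fine. The gap is in the engulfing step. You produce embedded disks in $T_{n+1}$ for the nullhomotopic circles by embedding $T_{n+1}$ in $S^4$, spanning each circle by a disk there, and then invoking the Whitney trick to remove intersections with the complementary $2$-spheres ``since $\pi_1(S^4)=0$.'' But the Whitney trick is not available for $2$--$2$ intersections in a $4$-manifold: simple connectivity only guarantees immersed Whitney disks, and in dimension~$4$ these generically meet the surfaces you are trying to separate, so the Whitney move cannot be performed. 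This is precisely the phenomenon that forces Casson to build his infinite towers in the first place, so appealing to the Whitney trick here is circular in spirit as well as technically invalid.

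The paper sidesteps this entirely with a much lighter observation. Since $T_{n-1}$ is a tubular neighborhood of a wedge of circles, and each of those circles is nullhomotopic in $T_n$, one only needs that \emph{homotopy implies isotopy for circles in a $4$-manifold} (a general-position fact for $1$-submanifolds, requiring no Whitney moves). Thus the wedge of circles, and hence its regular neighborhood $T_{n-1}$, can be smoothly isotoped into a small ball $B_n\subset\inter T_n$. This immediately gives $T_{n-1}\subset B_n\subset\inter T_n$ without ever seeking embedded spanning disks, worrying about framings, or detouring through $S^4$. Your argument can be repaired by replacing the Whitney-trick paragraph with this one-line isotopy fact.
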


\begin{proof}
Slightly thin the towers $T_n$ of the given Casson handle $CH$ by deleting boundary collars, so that each $T_n$ ($n\ge2$) contains $T_{n-1}$ in its interior and the union of these compact towers is $\inter CH$. We have seen that each kinky handle is a closed tubular neighborhood of a wedge of circles. Each such circle can be identified with the attaching circle of the corresponding kinky handle at the next higher stage. It follows by induction that each tower is also a neighborhood of a wedge of circles. (Collapse from the first stage up.) Since each circle is nullhomotopic in its next-stage kinky handle, it follows that $T_{n-1}$ is nullhomotopic in $T_n$. Since homotopy implies isotopy for circles in a 4-manifold, $T_{n-1}$ can be smoothly isotoped into a 4-ball in $T_n$. Equivalently, we can find a ball $B_n$ with $T_{n-1}\subset B_n \subset\inter T_n$ in the original nest of (slightly thinned) towers. Thus, $\inter CH$ is a nested union of balls. It is now easy to construct a diffeomorphism $\inter CH\approx\R^4$ sending each $B_n$ onto the ball of radius $n$.
\end{proof}

\subsection{Almost-smooth surfaces}\label{AlmostSmooth}  
To arrange Casson handle cores to be almost-smooth, we need the following notion:

\begin{de}\label{cellularity} A subset $C$ of an $m$-manifold $X$ is {\em smoothly cellular} if it can be described as a nested intersection of smooth $m$-balls $B_i$ with $\inter B_i\supset B_{i+1}$ for each $i$. It is {\em smoothly boundary cellular} if it is an intersection of half-balls, each intersecting $\partial X$ in an ($m-1$)-ball, and nested as before (using ``int'' in the set-theoretic sense).
\end{de}

The notion of cellularity was well-known at the time of Freedman's work. The author is not aware of explicit previous usage of boundary cellularity, although it was surely implicitly known to Freedman. (We suppress further usage of the adjective ``smoothly'' since we are taking everything to be smooth unless otherwise specified.) It is routine to check that if $C$ is cellular in either sense then $X-C$ is diffeomorphic to $X-\{p\}$, where $p$ is an interior or boundary point, respectively. Intuitively, $C$ can be ``shrunk to a point'' in $X$ without changing the ambient smooth structure.

\begin{thm}\label{almostSmooth} {\rm (Freedman.)}
The topological core of any Casson handle $CH$ is topologically ambiently isotopic (rel boundary and with compact support) to a disk $D$ that is smooth except at one point $p$ that can be chosen to be in either the interior or boundary of $D$.
\end{thm}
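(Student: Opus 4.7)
The plan is to construct by hand an almost-smooth disk $D\subset CH$ with boundary the attaching circle and only one non-smooth point, then invoke topological uniqueness of locally flat disks in $CH$ (which is homeomorphic to $D^2\times\inter D^2$ by Freedman's theorem) to conclude that $D$ is topologically ambiently isotopic, rel boundary and with compact support, to the topological core. The single wild point will be produced at the end by shrinking a compact wild set to a point using topological cellularity in the sense of Definition~\ref{cellularity}.

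First I would build a sequence of smoothly partially-immersed disks $\delta_n\subset T_n$ whose Hausdorff limit is a locally flat topological disk $D$. Start with the smoothly immersed core $\delta_1\subset T_1$. At each double point $p$ of $\delta_1$, the second-stage kinky handle glued at $p$ contains its own smoothly immersed core $\delta_{2,p}$ bounded by a small meridian circle of $p$; splice $\delta_{2,p}$ into $\delta_1$ via a thin collar tube to obtain a smoothly partially-immersed disk $\delta_2$ whose only double points lie at stage $2$. Iterate: at each stage $n$, splice stage-$(n+1)$ immersed cores into $\delta_n$ at all of its double points. Provided the spliced disks and tubes shrink in diameter fast enough (geometrically suffices), the sequence $\delta_n$ converges in Hausdorff distance to a topologically embedded, locally flat disk $D$ that is smooth outside a compact set $W\subset D$, whose points correspond bijectively to the infinite branches of the signed tree of $CH$.

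Next I would consolidate $W$ to a single point by topological cellularity. At the $n$-th stage, $W$ lies inside the finitely many outermost kinky handles of $T_n$; each such kinky handle together with all of its higher stages is a sub-Casson-handle, and hence by Freedman's theorem is homeomorphic to the open $2$-handle, with closure in $CH$ a compact topological $4$-ball meeting $\partial CH$ in a tubular $3$-ball of the attaching circle of the sub-Casson-handle. Enclosing the finite union of these sub-Casson-handle closures in a slightly larger topological $4$-ball $B_n\subset\inter CH$ (using the engulfing of Proposition~\ref{engulf} to thicken across the finitely many interconnecting regions) and arranging $B_n\supset B_{n+1}$ with $\bigcap_n B_n=W$ shows that $W$ is topologically cellular in $\inter CH$. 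Cellularity then yields a compactly supported topological ambient isotopy of $CH$, rel $\partial CH$, collapsing $W$ to a single interior point $p$; the image of $D$ under this isotopy is smooth except at $p$, giving the interior case. For the boundary variant, replace the $B_n$ by boundary cellular half-balls meeting $\partial CH$ in nested $3$-balls, arranged so that the collapse sends $W$ to a point of the attaching region.

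The main obstacle is the production of the nested topological balls $B_n$: they are not given directly by the smooth tower structure, and their existence as topological $4$-balls uses Freedman's theorem applied to each sub-Casson-handle. A secondary delicacy is verifying local flatness of the limit disk $D$ at points of $W$, which requires a quantitative shrinking estimate of the sort at the heart of Freedman's original proof. Both issues are aspects of the same phenomenon, namely that topological triviality of Casson handles is a nonconstructive fact resting on the shrinking of Whitehead-like decompositions; once these deep inputs are accepted, the present theorem follows from the cellularity packaging described above.
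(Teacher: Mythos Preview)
Your proof has a fundamental gap: the limit disk $D$ does not exist as a subset of $CH$. The Casson handle is the nested union $\bigcup T_n$, so every point of $CH$ lies in some finite tower and hence outside all sufficiently high-stage kinky handles. Your wild set $W$---the limit of double points being pushed to ever higher stages---therefore consists of ideal points at the ends of $CH$, not points of $CH$ itself. No choice of shrinking diameters fixes this: in any complete metric the stages recede to infinity, and in an incomplete one the limit lies outside the manifold. (Relatedly, higher-stage sub-Casson-handles do not meet $\partial CH$, and their closures in $CH$ are not compact balls; their frontiers are the solid-torus attaching regions.)

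Even setting this aside, your cellularity argument produces only \emph{topological} balls (via Freedman's theorem applied to sub-Casson-handles), hence only topological cellularity. The resulting collapse is merely a homeomorphism $CH-W\to CH-\{p\}$, so the image of the smooth piece $D-W$ has no reason to remain smooth. To conclude that the disk is smooth except at $p$, you need \emph{smooth} cellularity in the sense of Definition~\ref{cellularity}, i.e., nested smooth 4-balls, so that the collapse is a diffeomorphism off the cellular set.

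The paper resolves both issues simultaneously via Freedman's reimbedding lemma, which embeds entire Casson handles inside preassigned \emph{finite} towers of $CH$. Iterating yields a compact set $C\subset\inter CH$ (the end compactification of a reimbedded infinite tower) sandwiched between nested Casson towers actually lying in $CH$. The engulfing of Proposition~\ref{engulf} then provides \emph{smooth} balls between consecutive towers, so $C$ is smoothly cellular. Shrinking $C$ gives a diffeomorphism $CH-C\approx CH-\{p\}$, carrying the smooth annulus from the attaching circle to $C$ onto the desired almost-smooth core. The reimbedding is the essential missing ingredient: it is precisely what brings the would-be wild set in from infinity to a compact region where smooth engulfing applies.
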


The interior case is essentially \cite[Addendum~A to Theorem~1.1]{F}. The boundary case is unpublished but contemporaneous. We simplify the proof in places, using more recent methods. The proof also verifies that the (compactly supported) topological isotopy class of the core disk does not depend on the topological identification of $CH$ with a standard open 2-handle. We will usually assume such cores are smooth except at one interior point, but will also have use for the boundary case (Proposition~\ref{smoothing}).

\begin{proof}
Freedman's proof \cite{F} that $CH$ is a topological open 2-handle uses a difficult lemma that embeds Casson handles inside preassigned finite towers. Repeated use of this in towers at high stages exhibits $CH$ as a nested union of compacta parametrized, preserving order, by the standard Cantor set. Each compactum $C$ is the end compactification of an infinite Casson tower with the same attaching circle as $CH$. (These compacta are not manifolds, although they can be taken to be topological 2-handles if we replace Casson handles by Freedman's more general towers with many embedded surface stages, as in Freedman--Quinn \cite{FQ}.) Consider such a $C$ whose parameter is approached from above by a sequence in the Cantor set. Isotope $C$ slightly away from $\partial CH$. Then $C$ is cellular. This is because by construction, each neighborhood $V$ of $C$ contains some Casson tower $T_n$ (again isotoped away from $\partial CH$) whose subtower $T_{n-1}$ contains $C$. As in the proof of Proposition~\ref{engulf}, there is a smoothly embedded ball $B_n$ with $C\subset T_{n-1}\subset B_n\subset T_n\subset V$, where we include into interiors after slightly thinning $T_{n-1}$. Such balls can be constructed to nest as required. Cellularity of $C$ implies $CH-C$ is diffeomorphic to $CH-\{p\}$, for an arbitrary $p\in\inter CH$. The annulus $A$ connecting the attaching circle of $CH$ to the corresponding circle in $C$ is sent by this diffeomorphism to an annulus in $CH-\{p\}$ that compactifies to an almost-smooth disk $D$ in $CH$. Freedman showed that $D$ is a topological core by a deep dive into his 2-handle recognition proof, but this can be avoided with more modern technology: Analyzing $\pi_1$ in $CH-D$ shows that $D$ is ``locally homotopically unknotted'', hence locally flat  by Venema \cite{V}, and $\pi_1(CH-D)\cong\Z$. Then $D$ is topologically isotopic to the core of any given topological open 2-handle structure (essentially by the proof that a 2-sphere in $S^4$ with knot group $\Z$ is topologically unknotted \cite[Theorem~11.7A]{FQ}). To similarly arrange the singularity to lie on the boundary, do not isotope the subsets entirely away from $\partial CH$, but instead leave their intersections with $\partial CH$ a nested sequence of 3-balls. We can arrange the resulting boundary-cellular set to have the form $C'=C\cup\gamma$, where $\gamma$ is an arc in $A$ from $\partial CH$ to $C$. Shrinking $C'$ to a point sends $A$ to the required disk $D'$ that is smooth except at a boundary point. If we instead do the shrink in two stages, first shrinking $C$ gives the disk $D$ that we have already identified with a topological core, containing the embedded image of $\gamma$. Shrinking  that image does not change the homeomorphism type of $(CH,D)$, so the resulting $D'$ is also a topological core.
\end{proof}

Freedman actually used the innermost $C$ of the uncountable family as the cellular set, but we will later have use of the whole family.

We can alternatively derive the case $p\in\partial D$ from the case $p\in\inter D$ proved in \cite{F}. Given the latter, choose an arc $\gamma$ in $D$ from $\partial D$ to $p$. We can assume this is smooth except at $p$. In $CH-\{p\}$, there is a {\em unique} ray toward $p$ (up to smooth ambient isotopy) since the end at $p$ is simply connected (cf.~Section~\ref{Sums}). Thus, after an almost-smooth isotopy of $D$ in $CH$, we can assume $\gamma$ is smooth. Then $\gamma$ is smoothly boundary cellular, so we can shrink it to a boundary point, away from which $D$ is smooth.

Combining this theorem with work of Quinn \cite{Q} immediately gives:

\begin{cor}\label{cpctSurf}
Every compact, locally flat surface $F$ in a 4-manifold is almost-smoothable.
\end{cor}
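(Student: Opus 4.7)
The plan is to reduce to Theorem~\ref{almostSmooth} by first smoothing $F$ away from its 2-cells. Give each component of $F$ a CW structure with a single 2-cell, which is standard for compact surfaces (with or without boundary). Let $G\subset F$ be the 1-skeleton. Since $G$ has codimension~3 in $X$, Quinn's work \cite{Q} (topological transversality together with end smoothing in dimension~4) lets me topologically ambiently isotope $G$ to a smoothly embedded graph. Using existence of topological normal bundles for the locally flat surface $F$ (Freedman--Quinn), I then extend this to a topological ambient isotopy of $F$ that in addition makes a closed regular neighborhood of $G$ in $F$ smoothly embedded in $X$.

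After this isotopy, the 2-cells $D_1,\dots,D_k$ of $F$ are compact locally flat disks with smoothly embedded boundary in $X$. For each $D_i$, I would apply Freedman's disk embedding theorem in its uniqueness form (since the required topological disk is already given by $D_i$): this supplies a smoothly embedded Casson handle $CH_i\subset X$ attached to $\partial D_i$ with the framing induced by the topological normal bundle of $F$ along $D_i$, whose topological core is topologically ambiently isotopic rel boundary to $D_i$ via a compactly supported isotopy in a small neighborhood of $D_i$.

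Now apply Theorem~\ref{almostSmooth} to each $CH_i$: the topological core is topologically ambiently isotopic rel boundary, with compact support, to an almost-smooth disk having a single interior singularity. Concatenating the isotopies over all $i$ produces a topological ambient isotopy of $F$ to a surface smooth except at one interior point in each 2-cell, which gives the desired almost-smoothing.

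The main obstacle is the first step: simultaneously smoothing the 1-skeleton $G$ and a closed neighborhood of $G$ in $F$ inside the ambient smooth 4-manifold. One must carefully combine topological transversality for $G\subset X$, existence of topological normal bundles for $F$, and Quinn's local handle decomposition theory near the smoothed $G$, all while keeping the isotopy compactly supported so that the 2-cells $D_i$ can subsequently be desingularized independently via Theorem~\ref{almostSmooth} without disturbing the smoothing already obtained on the 1-skeleton.
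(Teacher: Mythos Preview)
Your proposal follows essentially the same route as the paper's proof: decompose $F$ with a single 2-cell (per component), invoke Quinn to smooth near the 1-skeleton and replace the topological 2-handle by a Casson handle, then apply Theorem~\ref{almostSmooth}. One point deserves more care: you assert that the Casson handle's topological core is topologically ambiently isotopic rel boundary to $D_i$, but Quinn's result \cite[2.2.4]{Q} only gives the Casson handle as ``weakly unknotted'' in the original topological 2-handle; the paper closes this gap by invoking the $\pi_1$-unknotting argument of \cite[Theorem~11.7A]{FQ} to upgrade this to an actual isotopy of cores, and you should do the same rather than attribute it directly to a ``uniqueness form'' of the disk embedding theorem.
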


\begin{proof}
Decompose $F$ as a CW-complex with a unique 2-cell, then thicken in the obvious way to a topological 2-handlebody. By \cite[2.2.2 and 2.2.4]{Q} (see also \cite[Theorem~5.2]{JSG}), we can assume the underlying 1-handlebody and its intersection with $F$ are smooth, and replace the 2-handle by a Casson handle. The latter is only given in \cite{Q} to be ``weakly unknotted'' in the original topological 2-handle, but we can now infer their two cores are topologically isotopic (as in \cite[Theorem~11.7A]{FQ} again). That is, we can topologically isotope the remaining disk of $F$ to agree with an almost-smooth core.
\end{proof}

The minimal genus and kinkiness of the resulting singularity were addressed by the author in \cite[Theorems~6.2 and 8.4]{MinGen}; cf.~Theorem~\ref{sing}. Proposition~\ref{smoothing} below shows that no singularity is necessary when $F$ is open.


\section{Initial results}\label{Context}

We now prove the results that follow most easily from the literature. This naturally leads into a brief discussion of the exotic $\R^4$ theory that we will need in Section~\ref{Branch}. For context, we then briefly digress to discuss (non)existence of exotic linear subspaces and annuli in general  dimensions, and the dual problem of smoothability of topological submanifolds of 4-manifolds.

\subsection{Exotic annuli and planes from Casson handles}\label{CH}

In this section, we construct exotic annuli and planes realizing all nonzero values of $\kappa$ and $\kappa^\infty$, respectively. We prove Theorem~\ref{ginfty}, that exotic planes realize all values of $\kappa^\infty$ and $g^\infty=\max\{\kappa_\pm^\infty\}$. (The case with $\kappa^\infty=0$ consists of quoting Theorem~\ref{0g}(a), whose proof uses different methods and is deferred to later sections.) In addition, part of Theorem~\ref{0g}(b), uncountably many annuli with each nonzero $\kappa$, is immediate from Corollary~\ref{annuli} below, with the rest completed in Section~\ref{DrawPlanes} by drawing the annuli. To begin, we recall what is known about the diffeomorphism classification of Casson handles, based on the author's previous work \cite{kink}, \cite{D5T}, \cite{MinGen}.

\begin{de}\label{CHkink}
The {\em minimal genus} $g(CH)$ of a Casson handle is the minimal genus of an embedded surface in $CH$ bounded by the attaching circle \cite{MinGen}. Similarly, the {\em kinkiness} $\kappa_\pm(CH)$ is the minimal number of double points of the given sign in a generically immersed disk bounded by the attaching circle \cite{kink}.
\end{de}

Note that if $CH'$ is a refinement of $CH$ (Definition~\ref{refinement}) then it canonically embeds in $CH$, so $g(CH')\ge g(CH)$ and similarly for $\kappa_\pm$.

\begin{thm}\label{CHclass} \cite[Theorem~8.6]{MinGen}
For each $(m,n)\in\Z^{\ge0}\times\Z^{\ge0}-\{(0,0)\}$, there are uncountably many diffeomorphism types of Casson handles with $\kappa=(\kappa_+,\kappa_-)=(m,n)$ and $g=\max\{m,n\}$.
\end{thm}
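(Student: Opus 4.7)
The plan is to separate the argument into an easy upper bound coming from a carefully chosen first stage, a construction of an uncountable family of trees built over that first stage, and a harder lower bound coming from gauge theory on closed 4-manifolds into which the Casson handles embed. Fix once and for all a 1-stage tower $T_1$ whose core is a generically immersed disk with exactly $m$ positive and $n$ negative self-intersections. Any Casson handle $CH$ built by attaching further stages to the standard meridional link in $\partial T_1$ contains this immersed disk bounded by the attaching circle, giving $\kappa_+(CH)\le m$ and $\kappa_-(CH)\le n$ for free. To bound the genus, tube $\min\{m,n\}$ opposite-sign pairs of double points in the usual way (increasing genus by $1$ per pair) and then resolve each of the remaining $|m-n|$ same-sign double points by excising two small disks and inserting a tube inside $T_1$ along a nullhomotopic loop; the result is an embedded surface of genus $\max\{m,n\}$ bounded by the attaching circle. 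So every Casson handle with first stage $T_1$ satisfies the desired upper bounds.

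For the uncountably many diffeomorphism types, I would attach above $T_1$ infinite towers corresponding to an uncountable family of signed trees without finite branches, drawing on the families produced by the author in \cite{kink} and \cite{D5T}. The matching lower bound for each member $CH$ in this family is the substantive step: embed $CH$ as an open submanifold of a carefully chosen smooth closed 4-manifold $X$ so that, if $CH$ were to contain an immersed disk with fewer positive or negative double points (or an embedded surface of smaller genus) than claimed, then capping the hypothetical low-complexity disk or surface off against $X-\inter CH$ would yield a smooth closed 4-manifold whose intersection form or gauge-theoretic invariants contradict Donaldson's diagonalization theorem (for the kinkiness bounds) or an adjunction-type constraint on Donaldson/Seiberg--Witten basic classes (for the genus bound). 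This is the strategy used for kinkiness in \cite{kink} and extended to genus in \cite{MinGen}. Uncountably many diffeomorphism types within the family are then separated by invariants that are monotone under refinement in the sense of Definition~\ref{refinement} -- for instance, the minimal genus of a preassigned sub-Casson-handle deep in the tree -- which takes a continuum of values across the family.

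The main obstacle I expect is designing a single ambient closed manifold $X$ that simultaneously forces all three inequalities: one needs definite summands of opposite signs to constrain $\kappa_+$ and $\kappa_-$ separately, together with a surface-sensitive summand to pin down $g$, all glued compatibly with the attaching framing of $T_1$. A secondary issue is uniformity -- the same $X$ must work for every tree in the uncountable family. This is where the refinement formalism does the real work: since kinkiness and genus are both monotone under refinement, and every candidate Casson handle in the family is a refinement of the single finite tower $T_1$, one gauge-theoretic closure argument set up for $T_1$ propagates to give the lower bounds simultaneously for every $CH$ in the family, completing the proof.
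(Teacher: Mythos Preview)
Your upper-bound paragraph is fine and matches the paper's. The two substantive departures are in the lower bounds and, more seriously, in how you distinguish uncountably many types.

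For the lower bounds, the paper does not embed $CH$ in a closed manifold at all. Instead it attaches $CH_{m,0}$ to a 4-ball along an unknot with framing $2m-2$; the resulting open manifold is Stein, and the adjunction inequality for Stein surfaces gives $g(CH_{m,0})\ge m$ and $\kappa_+(CH_{m,0})\ge m$ directly. Reversing orientation handles $\kappa_-$, and monotonicity under refinement (which you correctly invoke) pushes these bounds to $CH_{m,n}$. This completely sidesteps the obstacle you flag about designing a single closed $X$ enforcing all three inequalities at once---no such $X$ is needed.

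The genuine gap is your proposed mechanism for separating uncountably many diffeomorphism types. You suggest using ``the minimal genus of a preassigned sub-Casson-handle deep in the tree,'' but any such invariant is integer-valued and cannot take a continuum of values; at best you recover the countably infinite families of \cite{kink}. The paper's argument is entirely different: it first embeds a suitably refined Casson handle (with the same first stage as $CH_{m,n}$, hence the same $\kappa$ and $g$) representing a nondiagonalizable class in $\C P^2\#k\overline{\C P^2}$, then uses Freedman's reimbedding to produce an uncountable \emph{nested} family inside it, all sharing that first stage. Two members being diffeomorphic would let one build an end-periodic, definite, nondiagonalizable 4-manifold, contradicting Taubes's periodic-end theorem \cite{Tb}. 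This periodic-end argument is the missing idea---without it, or something of comparable strength, you cannot get past countably many types.
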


\begin{proof}
The reference deals with $\kappa$, but $g$ follows also. Let $CH_+$ be the Casson handle with one double point at each stage and all signs positive, so each kinky handle is given by Figure~\ref{double}(a). For each $m,n\in\Z^{\ge0}$ except for $m=n=0$, let $CH_{m,n}$ be the Casson handle made from $m$ copies of $CH_+$ and $n$ copies of its mirror image by gluing their attaching regions in the obvious way so that the resulting first stage core has $m$ positive and $n$ negative double points. Clearly, $\kappa_+(CH_{m,n})\le m$ and $\kappa_-(CH_{m,n})\le n$. The embedded surface made from the first stage core by tubing together pairs of double points of opposite sign and smoothing the remaining double points (replacing each local pair of intersecting disks by an annulus) shows that $g(CH_{m,n})\le \max\{m,n\}$. For lower bounds on these invariants, attach $CH_{m,0}$ to a 4-ball along an unknot with framing $2m-2$. The resulting interior admits a Stein structure. (See, for example, \cite[Chapter~11]{GS}. Note that the first stage is made from the cotangent bundle of $S^2$ by self-plumbing.) The adjunction inequality for Stein surfaces now shows that $g(CH_{m,0})\ge m$, and similarly $\kappa_+(CH_{m,0})\ge m$. (The adjunction inequality is insensitive to negative double points, ultimately since they can be blown up without changing the homology class.) Since $CH_{m,n}$ is a refinement of $CH_{m,0}$, we conclude that $\kappa(CH_{m,n})=(m,n)$ (with $\kappa_-$ evaluated by reversing orientation) and $g(CH_{m,n})=\max\{m,n\}$.

To produce uncountable families, let $X=\C P^2\# k\overline{\C P^2}$ and note that for large $k$, the class $\alpha=3e_0+\sum_{i=1}^8e_i\in H_2(X)$ (using the obvious basis) has $\alpha^2=1$ but orthogonal complement that is negative definite and not diagonalizable. (When $k=8$, the complement is even and hence $E_8$. This persists as a summand when $k$ increases.) Thus, $\alpha$ cannot be represented by an embedded sphere, which could be blown down to contradict Donaldon's Diagonalizability Theorem \cite{D}. However, Casson's algorithm \cite{C} represents $\alpha$ by a (highly ramified) Casson handle $CH$ attached to a 1-framed unknot in a 4-ball for $k\ge9$ (showing that $g(CH)\ne0$). With more work \cite{D5T}, one can explicitly embed the first stage (or first several) with only one (positive) double point so that Casson's algorithm still generates the rest of the Casson handle. Thus, we can assume $CH$ is obtained from $CH_+$ by refining only the higher stages. Refining further, we can alternatively assume $CH$ is made from any given $CH_{m,n}$ by refining only higher stages (and reversing orientation on $X$ if $m=0$), so that it has the same $\kappa$ and $g$ as $CH_{m,n}$. Now recall from the proof of Theorem~\ref{almostSmooth} that $CH$ contains an uncountable nest of Casson handles parametrized by a Cantor set. These can be constructed by reimbedding only above the first stage, so that they all have the same $\kappa$ and $g$ as $CH_{m,n}$. If any two of these Casson handles were diffeomorphic, then attaching them to concentric 4-balls in $X$ would create a diffeomorphic pair $W$, $W'$ of open subsets carrying $\alpha$ and with $\cl W'\subset W$. We could then cut $W'$ out of $X$ and replace it by an infinite stack of copies of $W-W'$, glued along diffeomorphic ends (Figure~\ref{periodic}). This would yield a 4-manifold with a periodic end and a definite, nondiagonalizable intersection form, contradicting Taubes \cite{Tb} (cf.~\cite[Theorem~9.4.10]{GS}).
\end{proof}

\begin{figure}
\labellist
\small\hair 2pt
\pinlabel {$W'$} at 78 45
\pinlabel {$W$} at 89 77
\pinlabel {diffeomorphic ends of $W$ and $W'$} at 152 -1
\pinlabel {$X$} at 45 5
\pinlabel {$W-W'$} at 239 101
\endlabellist
\centering
\includegraphics{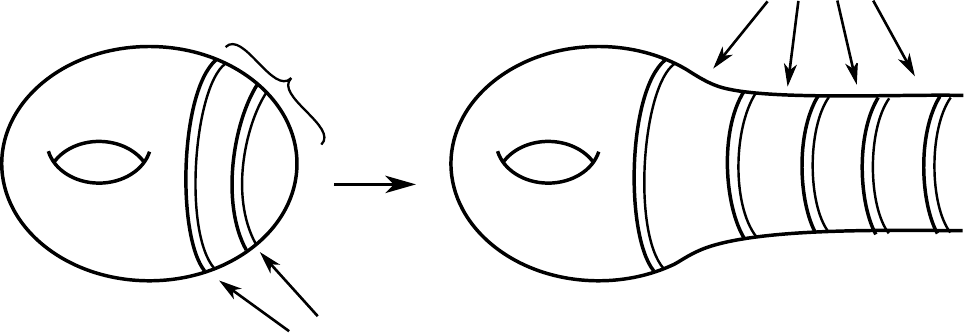}
\caption{Constructing a periodic end.}
\label{periodic}
\end{figure}

\begin{cor}\label{annuli}
For each $(m,n)\ne(0,0)$, there is an uncountable family of pairwise nonisotopic exotic annuli in $\R^4$ with $\kappa=(m,n)$ and $g=\max\{m,n\}$. Each annulus in the family extends to a topologically standard, almost-smooth plane that is relatively unsmoothable in that it cannot be smoothed by any topological isotopy that is smooth outside a compact set.
\end{cor}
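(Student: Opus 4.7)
The plan is to manufacture the exotic annuli directly from the Casson handles supplied by Theorem~\ref{CHclass}. Given such a $CH$ with $\kappa(CH)=(m,n)$ and $g(CH)=\max\{m,n\}$, Theorem~\ref{almostSmooth} produces an almost-smooth core disk $D\subset CH$ with interior singular point $p$, and Proposition~\ref{engulf} fixes a diffeomorphism $\inter CH\approx\R^4$. Then $P:=D-\partial D$ is a properly embedded almost-smooth plane in this $\R^4$, topologically isotopic to the standard plane because Freedman's pairwise homeomorphism $(CH,D)\approx(D^2\times\R^2,D^2\times\{0\})$ carries $P$ to the open plane in the standard open 2-handle interior. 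Removing a smooth closed disk $D_0\subset P$ containing $p$ yields a smooth proper half-open annulus $A=P-\inter D_0\subset\R^4$ that is topologically isotopic to the standard annulus and extends, by readjoining $D_0$, to the almost-smooth plane $P$ required by the corollary.

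Next I would verify the invariant equalities and distinctness. The smooth annulus $D-\inter D_0\subset CH$ connects $\partial A=\partial D_0$ to the attaching circle $\partial D$, giving a signed-double-point-preserving bijection between generically immersed disks in $\inter CH$ bounded by $\partial A$ and generically immersed disks in $CH$ bounded by $\partial D$, with an analogous correspondence for embedded surfaces. Hence $\kappa(A)=\kappa(CH)=(m,n)$ and $g(A)=g(CH)=\max\{m,n\}$, and the normal framing of $A$ inherited from $D$ realizes the 0-framing of Definition~\ref{0framing} near the end. Proposition~\ref{rebuild}(a) then identifies the completion $\hat X_A$, obtained by adjoining $S^1\times\R^2$ at the end of $A$, with $CH$ itself, since the missing boundary component is precisely $\partial CH\approx S^1\times\R^2$. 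Consequently a pairwise smooth isotopy $A_\alpha\simeq A_\beta$ would force $CH_\alpha\approx CH_\beta$, so the uncountable list of pairwise nondiffeomorphic $CH$ from Theorem~\ref{CHclass} descends to an uncountable pairwise nonisotopic family of annuli with the prescribed $\kappa$ and $g$.

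For relative unsmoothability, suppose for contradiction that $\phi_t$ is a topological ambient isotopy of $\R^4$ with $\phi_0=\id$, smooth outside a compact set $K\ni p$, and $\phi_1(P)$ smooth. Using the smoothness of $\phi_t$ outside $K$, one may modify $\phi_t$ within its class (by factoring out a compactly supported smooth diffeotopy that matches it outside a slightly enlarged compact set) so that it becomes compactly supported while preserving smoothness of $\phi_1(P)$. Extend such $\phi_t$ to $CH$ by the identity on $\partial CH$; then $\phi_1(D)=\phi_1(P)\cup\partial D$ is a smoothly embedded disk in $CH$ with boundary the attaching circle, since $\phi_1$ is the identity in a neighborhood of $\partial CH$ and $P$ extends smoothly across $\partial D$ to $D$ there. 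This forces $\kappa(CH)=(0,0)$ and $g(CH)=0$, contradicting $(m,n)\neq(0,0)$. The main obstacle is the compactification step — upgrading a topological isotopy smooth outside compact to a genuinely compactly supported one — which I expect to be tractable via standard smoothing arguments in the smooth region $\R^4-K$, but requires care to ensure $\phi_1(P)$ remains smooth after the modification.
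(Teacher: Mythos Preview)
Your approach matches the paper's: extract the annulus from an almost-smooth core of a Casson handle $CH$ supplied by Theorem~\ref{CHclass}, identify $\kappa(A)$ and $g(A)$ with those of $CH$, and invoke Proposition~\ref{rebuild}(a) to recover $CH$ from the framed annulus, so that nondiffeomorphic Casson handles yield nonisotopic annuli. That part is correct and essentially identical to the paper's argument.

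The step you flag as the ``main obstacle'' --- modifying the topological isotopy to be genuinely compactly supported so that it extends over $CH$ --- is unnecessary, and the difficulty you anticipate there is illusory. Given a topological ambient isotopy $\phi_t$ that is smooth on $\R^4-K$ with $\phi_1(P)$ smooth, simply enlarge $D_0$ so that $A\subset\R^4-K$ and observe that $\phi_t|_A$ is a smooth proper isotopy of embedded annuli in $\R^4$. By the (proper) isotopy extension theorem, $A$ and $\phi_1(A)$ are smoothly ambiently isotopic in $\R^4$, hence share the same $g$ and $\kappa$. But $\phi_1(A)$ is the end of the smooth plane $\phi_1(P)$, so $g(\phi_1(A))=0$ and $\kappa(\phi_1(A))=(0,0)$, contradicting $g(A)=\max\{m,n\}>0$. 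No compactification of the isotopy and no extension to $CH$ is needed; the paper's one-line justification (``the core disk interior is relatively unsmoothable whenever $\kappa$ or $g$ is nonzero'') is precisely this observation that the invariants of the annulus already obstruct any such smoothing.
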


\begin{proof}
An almost-smooth core disk of a Casson handle $CH$ (Theorem~\ref{almostSmooth}, interior case) is topologically standard in $\inter CH$, which is diffeomorphic to $\R^4$. Thus, it determines a smooth, topologically standard annulus. By Proposition~\ref{rebuild}(a) we can reconstruct the Casson handle from the annulus, so nondiffeomorphic Casson handles yield nonisotopic annuli in $\R^4$. The invariants $\kappa$ and $g$ of the annulus equal those of the Casson handle, and the core disk interior is relatively unsmoothable whenever $\kappa$ or $g$ is nonzero.
\end{proof}

More generally, every exotic annulus with $g>0$ extends as above, by Corollary~\ref{cpctSurf} applied to the topological spanning disk.

\begin{proof}[Proof of Theorem~\ref{ginfty}]
We wish to construct an exotic plane realizing any given value of $\kappa^\infty$. First, we topologically isotope the standard $S^2\subset S^4$ to create a suitable unique singularity: Decompose its tubular neighborhood as $B^4$ with a 2-handle attached, then canonically embed a given $CH_{m,n}$ in the 2-handle. As in the proof of Theorem~\ref{almostSmooth}, $CH_{m,n}$ contains a nested, decreasing sequence of Casson handles with intersection $C$ that is cellular in $S^4$. Isotope $C$ and the Casson handles away from $\partial CH_{m,n}$ so that the latter form a neighborhood system of $C$ in $CH_{m,n}$ (Figure~\ref{shrink}, left). Now shrinking $C$ to a point as in the figure preserves the nest of Casson handles, but their intersection becomes the singular point $p$ of the resulting almost-smooth disk $D$, with each Casson handle intersecting $D$ in a disk. After topological isotopy of the standard $S^2$, we can assume (as in the proof of Corollary~\ref{cpctSurf}) that it contains $D$. Deleting $p$ from $(S^4,S^2)$ now gives $P$, a topologically standard $\R^2$ in $\R^4$. The sequence of Casson handles can be constructed to all have the same first stage, and hence the same $\kappa_\pm$ and $g$, as $CH_{m,n}$. It is now easily verified that $\kappa^\infty(P)=(m,n)$. Alternatively, the first stages can be refined so that $\kappa_+$ or $\kappa_-$ (or both) increase without bound, so we can realize any preassigned $\kappa^\infty(P)\ne(0,0)$ and $g^\infty(P)=\max\{\kappa_\pm(P)\}$. The remaining case of vanishing invariants follows from Theorem~\ref{0g}(a). The exotic planes of that theorem are constructed in Section~\ref{planeFamilies} (proof of Theorem~\ref{order}) and shown to have vanishing invariants in Section~\ref{DrawPlanes}.
\end{proof}

\begin{figure}
\labellist
\small\hair 2pt
\pinlabel {$C$} at 76 31
\pinlabel {$p$} at 264 48
\pinlabel {$S^2$} at 192 39
\pinlabel {$CH_{m,n}$} at 312 3
\pinlabel {$D$} at 220 38
\pinlabel {$S^2$} at 14 39
\pinlabel {$CH_{m,n}$} at 133 3
\endlabellist
\centering
\includegraphics{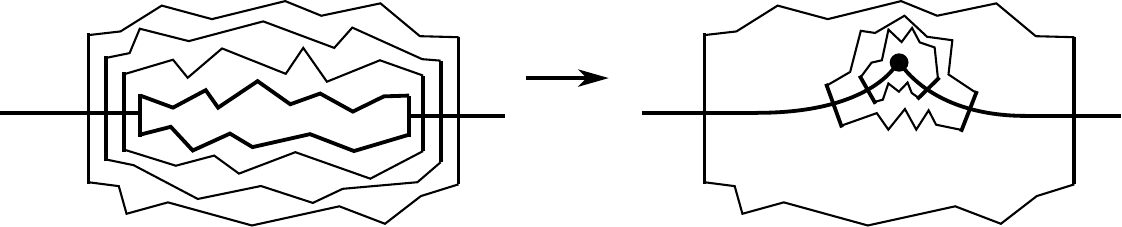}
\caption{Creating a singularity with a neighborhood system of Casson handles.}
\label{shrink}
\end{figure}

A similar approach is used in \cite{MinGen} to analyze the invariants of singularities of almost-smooth surfaces more generally.

\begin{Remark}\label{original}
These are essentially the original exotic planes implicit in \cite[Remark~4.2]{kink}. The construction in that remark was to one-point compactify (ramified versions of) $(\inter CH_{m,n}, \inter D)$, then smooth the singularity at infinity by an omitted argument similar to our proof that $g^\infty=0$ for Theorem~\ref{0g} (Section~\ref{DrawPlanes}). Yet another description of these planes is to smooth the core of $CH_{m,n}$ away from a boundary point (Theorem~\ref{almostSmooth}), then pass to the interior.
\end{Remark}

\subsection{Exotic $\R^4$ methods}\label{R4} 
In Section~\ref{Branch}, we will extensively use exotic $\R^4$ theory. We now illustrate the main ideas, starting from the proof of Theorem~\ref{CHclass}. (See Gompf--Stipsicz \cite[Section~9.4]{GS} for a broader look.)
First note that the manifold $W$ from that proof embeds in $\C P^2$, by standardly embedding $CH$ in the 2-handle of the obvious handle decomposition of the latter. We can assume the closure of $W$ has the form $B^4\cup C$ where $C$ is cellular as in the proof of Theorem~\ref{almostSmooth}. Then $R=\C P^2-\cl W$ is diffeomorphic to the complement of an almost-smooth sphere topologically isotopic to $\C P^1$, so it is homeomorphic to $\R^4$. However, it cannot be diffeomorphic to $\R^4$, since its end is diffeomorphic to that of the negative definite manifold $X-\cl W$: This has no smooth 3-spheres surrounding $\cl W$ in $X$, or else we could cut along such a 3-sphere and glue in $B^4$, contradicting Donaldson's Theorem. In fact, $R$ is a {\em large} exotic $\R^4$, meaning it has a compact, codimension-0 submanifold that cannot smoothly embed in $S^4$: If a sufficiently large compact subset embedded in $S^4$ (or in any closed, negative definite manifold), a similar gluing argument would fuse the latter into $X-\cl W$ to again contradict Donaldson. Now recall that such cellular subsets $C\subset CH$ occur in an uncountable nested family, so we obtain an uncountable family of $\R^4$-homeomorphs $R_s$ constructed in the same manner, nested in the reverse order. Thus, they are parametrized, preserving order, by $\Sigma$, the Cantor set with the upper endpoint of each middle third removed. Alternatively, we can construct a nested family parametrized by an interval, by considering open balls of sufficiently large radius in the topological $\R^4$ structure of $R$. Either way, the members $R_s$ of the family are pairwise nondiffeomorphic by the periodic end argument illustrated in Figure~\ref{periodic}. In fact, we obtain a 2-parameter family by end summing with opposite orientations, with $R_s\natural \overline{R_t}$ embedding in $R_{s'}\natural \overline{R_{t'}}$ if and only if $s\le s'$ and $t\le t'$. (There is no embedding with $s>s'$ since $\overline{R_{t'}}$ embeds in $\overline{\C P^2}$ and the Periodic End Theorem still applies with negative definite homology in the end. For the case $t>t'$, reverse orientation.) Similarly, the nonembedding statement for the manifolds $R_s$ persists if we end sum them with more general 4-manifolds that embed in closed, negative definite, simply connected 4-manifolds.

\begin{Remark}
We have also exhibited uncountably many almost-smooth spheres topologically isotopic to $\C P^1\subset\C P^2$ and distinguished (up to almost-smooth isotopy) by the diffeomorphism types of their exotic $\R^4$ complements. These can be constructed for any finite $\kappa$ with $\kappa_+>0$ by controlling the first stage as for Theorem~\ref{CHclass} and evaluating $\kappa$ as for Theorem~\ref{ginfty}.
\end{Remark}

In Section~\ref{Branch}, we discuss {\em small} exotic $\R^4$-homeomorphs, i.e., those that are not large. All known examples embed in $S^4$ (rather than just their compact subsets embedding). These are obtained by a method of Freedman \cite{DF} from failure of the smooth h-Cobordism Theorem for 4-manifolds (Donaldson \cite{Dpoly}). An end-periodic version of that theorem again yields uncountable families (DeMichelis--Freedman \cite{DF}). Since that version allows negative definite homology in the end, we obtain 2-parameter families \cite[Theorem~1.1]{menag}, \cite[Proposition~5.6]{BG} and their generalization as before \cite[Lemma~7.3]{MinGen}. (The conclusions are slightly weaker than before, since one must work relative to a certain compact subset: In the uncountable families, each diffeomorphism type appears at most countably often, so we obtain the cardinality of the continuum in ZFC set theory. One can construct such families so that some diffeomorphism type appears more than once \cite[Remark~6.8]{groupR4}.)

\subsection{Other dimensions}
We now show that exotic planes have no analogues in other dimensions.

\begin{prop}\label{highdim}
Suppose a homeomorphism $h\co\R^n\to\R^n$ is a local diffeomorphism near $Z=\R^k\times\{0\}$. Then $h(Z)$ is smoothly ambiently isotopic to $Z$ unless $n=4$ and $k=2$. The same holds if $Z$ is the annulus $(\R^k-\inter D^k)\times\{0\}$ unless $n=4$ and $k=2,3,4$, or $k=3$ and $n=5,6,7$.
\end{prop}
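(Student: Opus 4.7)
The cases $k = 0$ and $k = n$ are trivial, so I restrict to $1 \le k \le n - 1$. Since $h$ is a local diffeomorphism near $Z$, the image $h(Z)$ is a smoothly, properly embedded submanifold with trivial normal bundle (the $dh$-image of the standard framing), and is topologically ambient isotopic to $Z$ via $h^{-1}$. By the dilation trick noted in the introduction, a smooth ambient isotopy of $\R^n$ carrying $Z$ to $h(Z)$ exists iff there is any orientation-preserving diffeomorphism of $\R^n$ carrying $h(Z)$ to $Z$; the plan is to construct one.

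For $n \le 3$ the result follows from classical low-dimensional smooth topology (smooth Schoenflies for $k = n - 1$, smooth unknotting of arcs otherwise). For $n \ge 5$ the main tool is Kirby--Siebenmann smoothing theory: viewing the topological ambient isotopy from $h(Z)$ to $Z$ as a topological concordance between smooth submanifolds, the obstructions to replacing it by a smooth concordance lie in $H^*(\R^n, Z; \pi_*({\rm TOP}/{\rm O}))$, and these vanish because $\R^n$ retracts onto $Z$ and the coefficient groups vanish in the relevant low range when $n \ne 4$. Performing the smoothing rel the tubular neighborhood on which $h$ is already a diffeomorphism is a standard technical step. This yields the plane conclusion for all $(n, k)$ with $n \ge 5$.

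The case $n = 4$ requires handling $k = 1$ and $k = 3$ separately (the case $k = 2$ is excluded). For $k = 1$, the codimension is $3$, so $\R^4 - h(Z)$ is simply connected; $h(Z)$ bounds a smoothly embedded 2-disc joining it to the standard line, along which a smooth ambient isotopy may be constructed. For $k = 3$, the submanifold $h(\R^3)$ divides $\R^4$ into two smooth pieces, each with smooth boundary $\R^3$ carrying a product collar supplied by $h$. I would show each piece is diffeomorphic to $\R^3 \times [0, \infty)$ by combining the smooth collar with the topological product structure: a smooth 4-manifold with smooth boundary $\R^3$, a smooth boundary collar, and total space homeomorphic to $\R^3 \times [0, \infty)$ admits a smooth handle decomposition with no handles of positive index relative to the collar (forced by the topology), and such a handlebody on $\R^3$ is the standard collar. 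Gluing the two smooth halves along $h(\R^3)$ produces the required pair diffeomorphism.

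For the annulus version the same outline applies with $Z$ replaced by $(\R^k - \inter D^k) \times \{0\}$. The enlarged excluded list records additional obstructions associated to the distinguished smooth $S^{k-1}$: for $n = 4$ with $k = 2, 3, 4$ these are, respectively, the exotic planes constructed in this paper, Gluck-twist phenomena for smooth $S^2 \hookrightarrow \R^4$, and exotic smooth $B^4$'s bounded by a topologically standard $S^3$; for $k = 3$ with $n \in \{5, 6, 7\}$, a Gluck twist on an embedded $S^2 \subset \R^n$ exploits the nontriviality of $\pi_1({\rm SO}(n-3))$ to alter the smooth type without disturbing the topological type. Outside the excluded set, either $\pi_1({\rm SO}(n-k))$ vanishes or the distinguished boundary is of low enough dimension that classical unknotting applies, and the plane argument carries over. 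The principal obstacle I anticipate is the $(4, 3)$ plane step, which must avoid the 4-dimensional smoothing theory that fails in general and instead proceed by the direct handle decomposition of each complementary half-space; a secondary care point is the rel-tubular-neighborhood smoothing for $n \ge 5$.
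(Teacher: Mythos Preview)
Your $(n,k)=(4,3)$ argument for the plane case has a genuine gap. You claim that a smooth 4-manifold homeomorphic to $\R^3\times[0,\infty)$ with smooth boundary $\R^3$ ``admits a smooth handle decomposition with no handles of positive index relative to the collar (forced by the topology).'' But nothing forces this in dimension~4: a priori each complementary half could be an exotic smoothing of half-space, and you cannot cancel 4-dimensional handles by topological means. You have essentially assumed what you need to prove. The paper's argument avoids this entirely: an embedded $\R^3$ in $\R^4$ exhibits $\R^4$ as an end sum of two $\R^4$-homeomorphs (identify a tubular neighborhood of the $\R^3$ with a neighborhood of the gluing arc), and since the end-sum monoid of $\R^4$-homeomorphs has no inverses by the Eilenberg swindle, both summands must be standard. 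This is short and requires no handle analysis.

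Your treatment of the annulus case misidentifies the obstructions. The excluded cases with $k=3$, $n=5,6,7$ are not Gluck-twist phenomena governed by $\pi_1(\mathrm{SO}(n-3))$; the relevant group is $\pi_k(\mathrm{TOP}/\mathrm{O})$, which controls smoothings of the topological open $k$-handle $\hat X$ arising from Proposition~\ref{rebuild}(a). This group vanishes for $k\le 6$, $k\ne 3$, but $\pi_3(\mathrm{TOP}/\mathrm{O})\cong\Z_2$, which is why $k=3$ is excluded in those dimensions. For $k\ge 7$ one needs a separate argument (h-Cobordism when $k=n$, then stability of smoothing theory for $n>k$). Your criterion ``$\pi_1(\mathrm{SO}(n-k))$ vanishes'' is the wrong invariant and would give the wrong excluded set.
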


\noindent Every exotic plane or annulus in $\R^4$ has such a homeomorphism $h$ by uniqueness of topological normal bundles \cite[9.3D]{FQ}. One might hope for the same to hold for all smooth embeddings $\R^k\emb \R^n$ that are topologically standard, but the author is unaware of a sufficiently general uniqueness theorem for normal bundles. Since every orientation-preserving diffeomorphism $\R^k\to\R^k$ is isotopic to the identity, the first conclusion of the proposition can be immediately strengthened from an isotopy of the submanifold $h(Z)$ to an isotopy sending the restricted map $h|Z$ to $\id_Z$. However, this fails for annuli in some high dimensions: An exotic self-diffeomorphism of $S^{k-1}$ ($k\ge7$) extends radially over $\R^k$ and then as a product with $\id_{\R^{n-k}}$ over $\R^n$, giving a self-homeomorphism $h$ of $\R^n$ that is a local diffeomorphism off of $\{0\}\times\R^{n-k}$. This satisfies the hypotheses and conclusion of the second sentence of the proposition with $h(Z)=Z$. Suppose there were a smooth ambient isotopy sending $h|Z$ to $\id_Z$. For $k=n$, Proposition~\ref{rebuild}(b) would produce a forbidden diffeomorphism between $S^k$ and the exotic sphere $\Sigma$ obtained by gluing two balls via $h|S^{k-1}$. Similarly, for $k=n-1$, we would obtain a diffeomorphism $\Sigma\times\R\approx S^{n-1}\times\R$. This manifold would then contain disjoint copies of $\Sigma$ and $S^{n-1}$ cobounding a forbidden h-cobordism.

\begin{proof}
The cases $n\le 4$ only use smoothness of $h$ through that of its image submanifold $h(Z)$: When $n<4$, we can pairwise smooth $h\co(\R^n,Z)\to (\R^n,h(Z))$ by standard 3-manifold topology, then smoothly isotope $h$ to the identity. When $n=4$, smooth 1-manifolds cannot be knotted. Every embedding $\R^3\emb\R^4$ exhibits $\R^4$ as an end sum of two $\R^4$-homeomorphs. (A tubular neighborhood of the $\R^3$ can be identified with that of the gluing arc, e.g. \cite{CG}.) Since the monoid of these has no inverses \cite{infR4} (by the Eilenberg Swindle/Mazur Trick, cf.~last paragraph of Section~\ref{Sums}) both summands are standard, as is the original embedding.

When $Z=\R^k\times\{0\}$, $n>4$, we can assume after normal radial dilation that $h$ is a local diffeomorphism on $N=\R^k\times D^{n-k}$. Thus, the pulled back smoothing on the domain is standard on $N$. But $\R^n-\inter N\approx \partial N\times [0,\infty)$. Since smoothings rel boundary are classified by a homotopy lifting problem when $n>4$ (Kirby--Siebenmann \cite{KS}, see also Freedman--Quinn \cite[Section~8.3]{FQ} for a quick overview), the smoothing of $\R^n$ is isotopic rel $N$ to the standard smoothing. Equivalently, $h$ is topologically isotopic rel $N$ to a diffeomorphism, which is then smoothly isotopic to the identity, completing the proof for $\R^k\times\{0\}$.

The proposition holds without use of $h$ when $n\ge 2k+2$, since homotopy implies isotopy by transversality, completing the $k=3$ case. For the remaining case of annuli with $k\ne3$, it suffices to show that the topological open $k$-handle $\hat X$ arising from Proposition~\ref{rebuild}(a) is diffeomorphic to a standard open $k$-handle, preserving the attaching sphere setwise. This follows unless $k=3$ or $k\ge7$ by vanishing of the smoothing uniqueness obstruction $\pi_k({\rm TOP}/{\rm O})$ \cite{KS}, \cite[Section~8.3]{FQ}. For $k=n\ge7$, $\hat X$ must be diffeomorphic to a ball as required, by puncturing it and applying the h-Cobordism Theorem. Thus, the uniqueness obstruction is encoded in how its boundary is identified with $S^{k-1}$. Stability of high-dimensional smoothing theory \cite[Essay~I, Section~5, Remark~2]{KS} now gives the required diffeomorphism for $n>k\ge7$.
\end{proof}

Regarding the missing cases of the proposition, the bulk of this paper deals with $(n,k)=(4,2)$. The case of annuli with $n=k=4$ is equivalent to the notorious 4-dimensional smooth Schoenflies Conjecture \cite[Proposition~2.2]{groupR4}. The case $k=3$ is equivalent (as above) to nonexistence of an exotic open 3-handle with interior diffeomorphic to $\R^n$. When $n=4$, there are uncountably many smoothings of $S^3\times\R$ (for example, connected sums of $\R^4$-homeomorphs). Drilling out a neighborhood of a properly embedded line gives uncountably many exotic 3-handles, but it is not known if an exotic 3-handle can ever have interior diffeomorphic to $\R^4$. If there is an exotic 3-handle in dimension 5, 6 or 7, it and the corresponding exotic annulus are unique since $\pi_3({\rm TOP}/{\rm O})\cong\Z_2$.

\subsection{Unsmoothable submanifolds}
In 4-manifolds, there are many ways to construct topological (locally flat) embeddings of compact surfaces that cannot be smoothed by topological isotopy. An example from the 1980s is the topological sphere representing the class $\alpha$ in the proof of Theorem~\ref{CHclass}. The manifold $\inter(B^4\cup CH_{m,0}$) in the first paragraph of that proof has homology generated by a class with minimal genus $m$ but represented by a topological sphere (so by an unsmoothable surface of any genus less than $m$). It is an interesting open question whether such compact unsmoothable surfaces exist in $\R^4$. Corollary~\ref{annuli} exhibits topological planes in $\R^4$ that are unsmoothable relative to their smooth ends. However, without restriction on the end, noncompact surfaces  in smooth 4-manifolds are always smoothable:

\begin{prop}\label{smoothing}
Every topological (locally flat, proper) embedding of a noncompact surface into a smooth 4-manifold $X$ is topologically (ambiently) isotopic to a smooth embedding.
\end{prop}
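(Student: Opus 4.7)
The plan rests on the fact that a noncompact surface admits a proper handle decomposition with only 0- and 1-handles. This lets us avoid the Casson handles that produce the singularities in Corollary~\ref{cpctSurf}, since the obstruction to smoothing a compact surface in a 4-manifold lives entirely in the 2-handles.

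First I would construct such a handle decomposition on $F$. Exhaust $F = \bigcup_{i=1}^\infty F_i$ by compact subsurfaces with nonempty boundary, $F_i\subset\inter F_{i+1}$, so that each component of $\cl(F_{i+1}\setminus F_i)$ is a compact surface with boundary. Any compact surface with nonempty boundary deformation retracts onto a 1-complex spine and hence admits a handle decomposition using only 0- and 1-handles, and the same holds relatively for each $\cl(F_{i+1}\setminus F_i)$ rel its boundary in $\partial F_i$. Splicing these inductively gives a proper handle decomposition of $F$ using only 0- and 1-handles.

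Next, I would thicken this to a proper topological 4-dimensional 1-handlebody neighborhood $N\subset X$ of $F$, using the topological normal disk bundle of the locally flat $F$ in $X$ \cite[9.3D]{FQ}: each 0-handle $D^2$ of $F$ thickens to a 0-handle $D^4$ of $N$ containing it as $D^2\times\{0\}$, and each 1-handle $D^1\times D^1$ of $F$ thickens to a 1-handle $D^1\times D^3$ of $N$ containing it as $D^1\times D^1\times\{0\}$. Then apply the smoothing results \cite[2.2.2, 2.2.4]{Q} of Quinn (as in the proof of Corollary~\ref{cpctSurf}) one handle at a time, obtaining compactly supported topological ambient isotopies of $X$ that successively smooth the pair $(N,F)$ near each thickened handle. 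With the supports chosen locally finite in $X$, the infinite composition converges in the compact-open topology to an ambient homeomorphism of $X$ topologically isotopic to the identity, sending $F$ to a smooth surface.

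The main obstacle will be engineering the inductive application of Quinn's theorem so that (i) the smoothing at each new handle preserves the smoothings already achieved at the previously smoothed handles, realized via isotopies supported away from a smooth collar of the previously smoothed region; (ii) the supports can be made locally finite for the infinite composition to be a legitimate topological ambient isotopy; and (iii) the splicing of handle decompositions in the first step is consistent at each interface $\partial F_i$. Once these technicalities are arranged, no almost-smooth singularity is ever introduced: the absence of 2-handles in $F$ is precisely what lets us sidestep the obstruction that Corollary~\ref{annuli} exploits in the compact-with-smooth-end setting.
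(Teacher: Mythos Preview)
Your approach is correct and genuinely different from the paper's. The paper does \emph{not} avoid 2-cells: it takes a cell decomposition of $F$ with 2-cells, smooths near the 1-skeleton via Quinn, and realizes each 2-cell as the topological core of a Casson handle $CH_i$. It then exploits noncompactness differently: using Theorem~\ref{almostSmooth} in its boundary form, each core is smoothed except at a single point $p_i$ on its attaching circle, and because $F$ is noncompact there are disjoint proper rays $\gamma_i\subset F$ from each $p_i$ to infinity. Pushing the half-balls $B_i\subset CH_i$ out along these rays (Figure~\ref{push}) sends each singularity to infinity, leaving $F$ smooth.

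Your route uses noncompactness earlier and more structurally, by choosing a handle decomposition of $F$ with no 2-handles at all, so Casson handles and almost-smooth cores never enter. This is conceptually cleaner and more elementary, at the cost of managing an infinite inductive smoothing whose convergence you correctly flag in (i)--(ii); local finiteness follows from properness of $F\hookrightarrow X$, so these are routine. The paper's approach, by contrast, recycles the compact machinery of Corollary~\ref{cpctSurf} verbatim and adds a single geometric trick (the ray-push), which keeps the argument closer to the compact case and makes explicit use of the boundary-singularity version of Theorem~\ref{almostSmooth}.
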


\begin{proof}
Let $F\subset X$ be the image of the embedding. As in the proof of Corollary~\ref{cpctSurf}, we can smooth $F$ near the 1-skeleton of a cell decomposition and realize the rest of $F$ as topological cores of disjointly embedded Casson handles $CH_i$ in $X$. Choose a point $p_i\in CH_i$ in each attaching circle. These are the endpoints of a properly embedded family of rays $\gamma_i$ in $F$ whose interiors are disjoint from the Casson handles. Let $B_i$ be a half-ball in $CH_i$ centered at $p_i\in\partial CH_i$. By pushing along each $\gamma_i$ as in Figure~\ref{push}, we can disjointly reimbed each $CH_i-\{p_i\}$ in $X$, fixing $CH_i-B_i$ and properly embedding $B_i-\{p_i\}$, sending $F\cap CH_i-\{p_i\}$ into $F$. But $F\cap CH_i$ is a topological core by construction, so Theorem~\ref{almostSmooth} gives a compactly supported topological isotopy in $CH_i$ that smooths it except at $p_i$. Applying this to each reimbedded $CH_i-\{p_i\}$ smooths $F$. The embedding map of $F$ can then be immediately smoothed by a pairwise isotopy of $(X,F)$.
\end{proof}

\begin{figure}
\labellist
\small\hair 2pt
\pinlabel {$B_i$} at 62 48
\pinlabel {$p_i$} at 74 58
\pinlabel {$\gamma_i$} at 90 65
\pinlabel {$F\cap CH_i$} at 33 34
\pinlabel {$F$} at 86 18
\endlabellist
\centering
\includegraphics{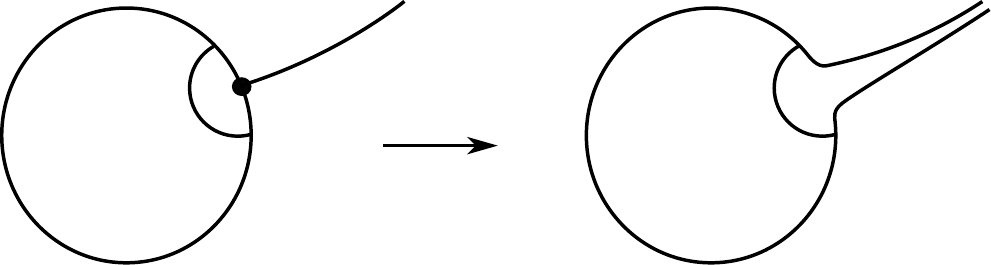}
\caption{Pushing a singularity out to infinity, as seen in the surface $F\subset X$.}
\label{push}
\end{figure}

There are many ways to obtain unsmoothable embeddings of closed 3-manifolds into 4-manifolds. More strongly, embeddings that are not almost-smoothable (and cannot even be smoothed away from certain larger subsets) can be obtained in various ways using the topology of the 3-manifold.  (See the last paragraph of \cite[Section~6]{MinGen}). However, locally flat embeddings of $S^3$ and $\R^3$ in $\R^4$ are always topologically standard. (This follows up to homeomorphism from Brown \cite{Brown} and Cantrell \cite{Cn}, respectively, the latter after removing a point from the sphere pair in the corollary of \cite{Cn}. An ambient isotopy to the identity can then be obtained by straightening near $0\in\R^4$ \cite[2.2.2]{Q} and dilating.) For such embeddings, we can still obtain unsmoothability within a neighborhood of the submanifold, but must use a different approach:

\begin{prop}\label{R3} There is a topological embedding $\R^3\emb\R^4$ separating the components of some compact set $K$ in its complement, such that no smooth embedding of $\R^3$ separates $K$. Thus, the embedding $\R^3\emb\R^4-K$ is not smoothable by a topological isotopy. There is a topological embedding  $S^3\emb\R^4$ with a neighborhood in which it is not homologous to an almost-smooth embedding of $S^3$.
\end{prop}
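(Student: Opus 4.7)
The plan is to work inside a large exotic $\R^4$-homeomorph $R$ drawn from the theory reviewed in Section~\ref{R4}, identifying it topologically with $\R^4$. Throughout, let $W \subset R$ denote a compact codim-0 smooth submanifold that does not smoothly embed in $S^4$---equivalently (by the gluing argument of Section~\ref{R4} contradicting Donaldson's Theorem), in any closed, simply connected smooth 4-manifold with the intersection form of $S^4$.

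For the second statement, take a topological $S \cong S^3 \subset R$ bounding (via topological Alexander, since $R \approx_{\text{top}} \R^4$) a topological 4-ball containing $W$, and let $U$ be a tubular neighborhood of $S$ in $R$ disjoint from $W$. Suppose $S' \cong S^3 \subset U$ is almost-smooth and homologous to $S$ within $U$. The homology in $H_3(U) \cong \Z$ forces $S'$ to be parallel to $S$ in $U$, so $S'$ topologically encloses the same region, and in particular contains $W$ in its interior in $R$. The unique singular point of $S'$ sits in a smooth chart of $R$, so a local almost-smooth isotopy (Section~\ref{AlmostSmooth}) replaces $S'$ with a genuinely smooth $S'' \cong S^3$ in $R$ still enclosing $W$. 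Capping $S''$ off with a smoothly attached $B^4$ yields a smooth closed 4-manifold $\Sigma$ homeomorphic to $S^4$, inside which $W$ smoothly embeds---contradicting the obstruction on $W$.

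For the first statement, the plan is to choose $K = K_1 \sqcup K_2 \subset R$ with each $K_i$ individually failing to smoothly embed in $S^4$. A topological $\R^3 \subset R$ separating $K_1$ from $K_2$ exists by general position in the topological $\R^4 = R$. If a smooth $Y \cong \R^3 \subset R$ separated them, cutting $R$ into smooth open $\R^4$-homeomorph sides $U_- \supset K_1$ and $U_+ \supset K_2$, then whichever $U_\pm$ smoothly embedded in $S^4$ would embed the corresponding $K_i$ there, contradicting the obstruction. Hence both $U_\pm$ must be large, so $R$ would be realized as the end-sum $U_- \natural U_+$ of two large exotic $\R^4$-homeomorphs (cf.~Section~\ref{Sums}).

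The principal obstacle is now to exclude this last possibility. The plan is to select $R$ from the uncountable families of Section~\ref{R4} so that it is indecomposable as an end-sum of two large $\R^4$-homeomorphs---for example by taking $R$ minimally in the compact-equivalence ordering described at the end of Section~\ref{Sums}, so that every proper smooth $\R^4$-homeomorph subset of $R$ is small. The technical heart of the argument is to simultaneously arrange (i) the existence of two disjoint compact obstructions $K_1, K_2$ in such an $R$ and (ii) the required indecomposability, by refining the Casson handle construction of Section~\ref{R4} appropriately.
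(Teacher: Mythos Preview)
Your approach has a fundamental mismatch with the statement: the proposition concerns the \emph{standard} smooth $\R^4$, but you work inside a large exotic $R$, merely identified with $\R^4$ by a homeomorphism. The conclusion ``no smooth $\R^3$ separates $K$'' is a statement about the ambient smooth structure; proving it for the exotic structure on $R$ says nothing about standard $\R^4$, since a large $R$ does not even smoothly embed there. The paper instead works in standard $S^4$, using the topological $S^3\times\R$ neighborhood $U$ from Section~\ref{Branch1} that contains no smooth separating $S^3$, and removes a point.

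Even granting your setup, the $\R^3$ argument has a genuine gap. You need an $R$ that (i) contains two \emph{disjoint} compact obstructions $K_1,K_2$ to embedding in $S^4$, yet (ii) is not an end sum of two large $\R^4$-homeomorphs. You do not construct such an $R$; you only announce a ``plan'' to ``refine the Casson handle construction appropriately.'' It is not at all clear such an $R$ exists---the known large examples carry a single localized obstruction, and having two disjoint ones naturally suggests an end-sum decomposition. The paper bypasses this entirely: if a smooth $\R^3$ split $\R^4$, the Eilenberg swindle (no inverses for end sum) forces \emph{both} sides to be standard $\R^4$, hence to contain smooth $3$-spheres near infinity, contradicting the defining property of $U$.

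For the $S^3$ case, your appeal to Section~\ref{AlmostSmooth} to locally smooth an almost-smooth $S^3$ is not supported by anything there (that section produces almost-smooth disks, it does not remove singularities). The paper's move is the opposite: \emph{delete} the singular point of the hypothetical almost-smooth $S'$ to obtain a smooth proper $\R^3$, then invoke the already-proved $\R^3$ case.
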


\begin{proof}
There is a topological embedding $S^3\times\R\emb S^4$ whose image $U$ contains no smooth 3-sphere separating its ends, as discussed more carefully in Section~\ref{Branch1}. Both of the required embeddings arise from this by deleting a point from $S^4$. For the embedding of $\R^3$, delete a point $p$ from the image $S$ of $S^3\times\{0\}$ to obtain a topologically embedded $Q$ in $\R^4$ homeomorphic to $\R^3$, separating the two components of $K= S^4-U$. Suppose there were a smooth $Q'\approx\R^3$ separating these. Then $Q'$ would split $\R^4$ as an end sum. But this operation has no inverses \cite{infR4} (by the Eilenberg Swindle/Mazur Trick, cf.~Section~\ref{Sums}). Thus, $\R^4-Q'$ would be diffeomorphic to two copies of $\R^4$. But then each copy would have smooth 3-spheres near infinity, contradicting the fact that $U$ has no such 3-spheres.

To realize the data of the last sentence of the proposition, instead remove a point of $S^4-U$ to obtain $S\subset U\subset\R^4$. Suppose there were an almost-smooth 3-sphere $S'$ in $U$ as in that sentence. We could assume its singularity agreed with $p$ from the previous paragraph (after a smooth, compactly supported isotopy of $U$ sending one point to the other). Then removing $p$ from $(S^4,S')$ would give $Q'$ forbidden by the first paragraph.
\end{proof}


\section{Exotic branched coverings}\label{Branch}

Having realized all nonzero values of $\kappa^\infty$ (and thereby $g^\infty$)  by exotic planes for Theorem~\ref{ginfty}, we probe deeper by studying exotic planes for which the corresponding double branched covers can be recognized as exotic smoothings of $\R^4$, allowing us to harness the powerful theory of such smoothings. The first such example, due to Freedman, was later incorporated by the author into a peculiar $\Z_2\oplus\Z_2$-action \cite{menag}. We now analyze this action in detail. (We reverse the orientations of \cite[Sections~3 and 4]{menag} to obtain the more ``natural" orientations of subsequent papers that are stable under connected sum with $\overline{\C P^2}$ but not $\C P^2$. Signs of the double points of the Casson handles were unspecified in that paper, with one exception discussed below. We will ultimately see in Remark~\ref{reflect}(c) that both conventions can produce the same exotic $\R^4$, but with different $\Z_2\oplus\Z_2$-actions.) The action generates two different uncountable collections of exotic planes, corresponding to the two types of exotic $\R^4$-homeomorphs (small and large). One type is generated by unknots  (proof of Theorem~\ref{0g} in Section~\ref{DrawPlanes}) so has $g^\infty=0$, and can be drawn explicitly without local maxima (Section~\ref{DrawPlanes}). The other has $g^\infty>0$ (Proposition~\ref{gTaylor}), realizing each of infinitely many values of $g^\infty$ by uncountably many exotic planes (Corollary~\ref{ginf}), and requires infinitely many local maxima in any diagrams (Corollary~\ref{inf2h}). These seem intractable to draw explicitly. (A more detailed comparison of the two types of exotic planes appears at the beginning of Section~\ref{Questions}.) As applications, we show that many embedded surfaces, including all those arising from compact surfaces embedded in the 4-ball rel nonempty boundary, have infinitely many exotic cousins (Section~\ref{Surf}), and that the singularities of almost-smooth surfaces arising from Freedman theory as in Corollary~\ref{cpctSurf} typically require infinitely many local minima (Theorem~\ref{sing}, proved in Section~\ref{Taylor}). We also exhibit exotic planes with uncountable groups of symmetries that are not pairwise isotopic to the identity (Section~\ref{Group}).

\subsection{An exotic $\Z_2\oplus \Z_2$ action}\label{Branch1}
In \cite{menag}, the 4-sphere is exhibited as the union of two open subsets $R$ and $R^*$ (the former denoted $R'$ in \cite{menag}). Their intersection $U=R\cap R^*$  is homeomorphic to $S^3\times\R$, but has no smoothly embedded $S^3$ separating its ends. In particular, each of $R$ and $R^*$ is a small exotic $\R^4$. Let $N=R\# S^2\times S^2$, so $R^*\cup N$ is identified with $S^2\times S^2$ (Figure~\ref{U}). There is a standard action by $G=\Z_2\oplus \Z_2$ on $S^2\times S^2$ given in holomorphic affine coordinates for $S^2=\C P^1$ by the three involutions $r_y(u,v)=(\bar{u},\bar{v})$, $r_z(u,v)=(v,u)$ and $r_x(u,v)=(\bar{v},\bar{u})$. By construction, $U$, $N$ and $R^*$ are invariant under the action, although the connected sum decomposition of $N$ is not. Figure~\ref{N}(a) shows $N$ with the involutions given by $\pi$-rotation about the corresponding coordinate axes (so $r_x$ is $\pi$-rotation in the plane of the paper). To understand the figure, first  interpret the fine circles as 2-handles. These equivariantly cancel the 1-handles, resulting in a diagram of $S^2\times S^2$ with its $G$-action. Now imagine each fine 2-handle decomposed as a 1-handle and two 2-handles that are interchanged by $r_z$. (This could be drawn explicitly by adding a pair of balls to each fine circle where it intersects the $z$-axis.) For a fixed Casson handle $CH$, we can $G$-equivariantly embed a copy of $CH$ into each of the four resulting 2-handles. (Use the standard embedding of Section~\ref{Casson}, so each $CH$ is topologically (nonambiently) isotopic to the open 2-handle containing it.) Equivalently, we are $r_z$-equivariantly embedding a Casson handle $2CH$ (made by gluing together two copies of $CH$) into each of the two fine 2-handles in the figure. The manifold $N$ is obtained by replacing the fine 2-handles with these $r_z$-invariant Casson handles and removing the remaining boundary, so it is topologically (nonambiently) equivariantly isotopic to $S^2\times S^2$ minus the 4-handle. Its complement in $S^2\times S^2$, together with the topological open collar $U$ of $N$, is $R^*$. The $G$-actions on $N$ and $R^*$ are topologically standard. That is, up to homeomorphism, the action on $N$ is given by the standard action on $S^2\times S^2$ minus a fixed point, and the action on $R^*$ is obtained by rotating about the three coordinate axes of $\R^3$ in $\R^3\times\R$. To see $R$, surger the $S^2\times S^2$-summand out of $N$ (nonequivariantly) by replacing the 0-framing on one of the large curves by a dot. Note that since $N$ and $R$ are each made from a compact manifold by attaching Casson handles, they can be drawn explicitly (sometimes modulo the issue of the required complexity of $CH$). The manifolds and planes constructed from these can also be drawn (Section~\ref{Diagrams}). In contrast, $R^*$ is obtained by removing topological core disks or closed neighborhoods of these, obtained by infinitely iterated Freedman reimbedding (Section~\ref{AlmostSmooth}), so it seems an intractable problem to explicitly draw it or any derived exotic objects.

\begin{figure}
\labellist
\small\hair 2pt
\pinlabel {$R$} at 80 20
\pinlabel {$N$} at 130 8
\pinlabel {$R^*$} at 40 112
\pinlabel {$U$} at 47 65
\pinlabel {$S^3$} at 109 62
\pinlabel {$S^4\ \ \#\ \  S^2\times S^2$} at 105 103
\endlabellist
\centering
\includegraphics{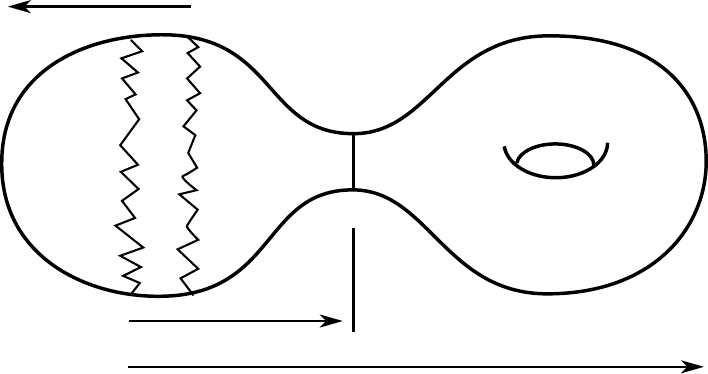}
\caption{A $\Z_2\oplus\Z_2$-invariant decomposition of $S^2\times S^2$ with $U$ an exotic $S^3\times \R$ at the end of two exotic $\R^4$-homeomorphs $R$ and $R^*$ in the (noninvariant) $S^4$-summand.}
\label{U}
\end{figure}

\begin{figure}
\labellist
\small\hair 2pt
\pinlabel {(a)} at 0 0
\pinlabel {(b)} at 175 0
\pinlabel {0} at -7 88
\pinlabel {0} at 109 88
\pinlabel {0} at 34 170
\pinlabel {0} at 34 6
\pinlabel {$2CH$} at 77 170
\pinlabel {$2CH$} at 77 6
\pinlabel {branch locus} at 211 23
\pinlabel {$-1$} at 283 42
\pinlabel {0} at 220 152
\pinlabel {$2CH$} at 263 152
\endlabellist
\centering
\includegraphics{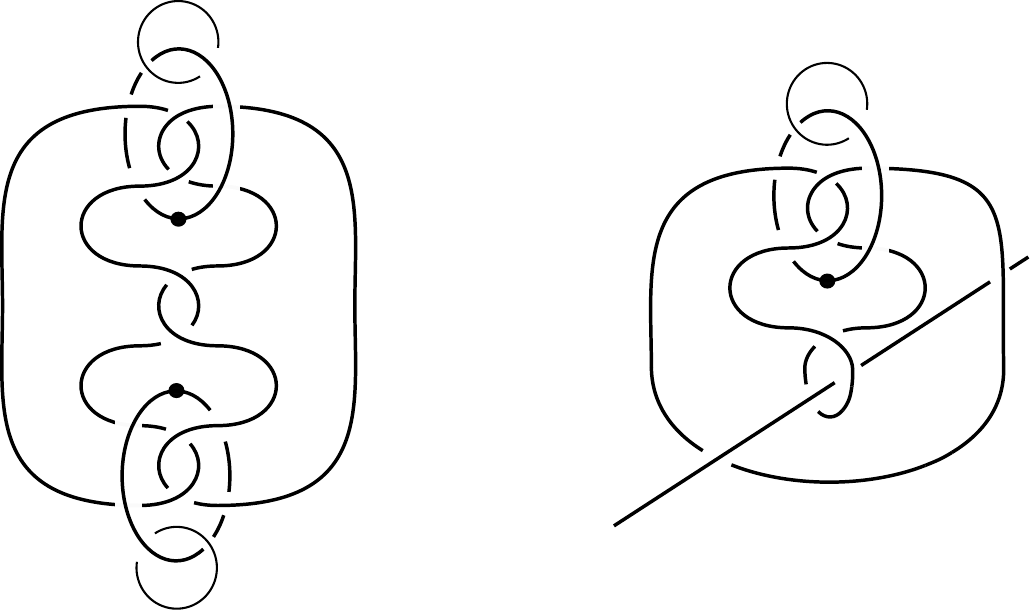}
\caption{The 4-manifold $N$ with its $G$-action (a) and its quotient $N_x$ (b). Putting a dot on one large 0-framed curve in (a) gives $R$, a small exotic $\R^4$ with a $G$-action of its end.}
\label{N}
\end{figure}

Exoticness of $R$ and $R^*$ follows from the original construction of $N$. For suitably chosen $CH$, $N$ embeds in the middle level of an h-cobordism between two nondiffeomorphic closed 4-manifolds $X_0$ and $X_1$ that are distinguished by gauge-theoretic invariants. (This construction goes back to Casson \cite{C} with a more complicated version of $N$, cf.\ also \cite[Section~9.3]{GS}.) The two large 0-framed curves represent the unique ascending and descending 2-sphere of the h-cobordism, with one extra pair of intersection points. Adding a dot to one curve surgers the corresponding sphere out of $N$, exhibiting copies of $R$ in $X_0$ and $X_1$, respectively. Each copy of $R$ contains a compact set $K$ (obtained from Figure~\ref{N} by removing the fine curves and dotting one 0-framed curve) such that the manifolds $X_i-\inter K$ are diffeomorphic. Then $R- K$ has no smooth $S^3$ separating its ends, for otherwise $X_0$ and $X_1$ would only differ by homotopy 4-sphere summands, so their invariants would agree. Thus, $U\subset R-K$ has no $S^3$ separating its ends, so $R$ and $R^*$ are exotic. Neither of the diffeomorphisms $r_x$ or $r_z$ can extend over $R$, since we could otherwise construct a diffeomorphism between $X_0$ and $X_1$. However, the diagram shows that $r_y$ does extend. By Bi\v zaca and the author \cite{BG}, we can take $CH$ to be the Casson handle $CH_+$ with a single positive double point in each kinky handle, or any refinement of it. (We will need to pass to an arbitrarily large compact subset of $N$ to obtain the embedding in an h-cobordism, but the rest of the discussion still applies.) By DeMichelis--Freedman \cite{DF} (as applied in \cite[Section~1]{menag} to the h-cobordism realizing the simple version of $N$ in Figure~\ref{N}), varying the thickness of $U$ as in the proof of Theorem~\ref{almostSmooth}, replacing $CH_+$ by suitable, highly ramified refinements of it, results in uncountably many diffeomorphism types of $R$ by an end-periodic h-cobordism argument. (One can similarly vary $R^*$, but distinguishing the resulting diffeomorphism types may require new tricks.)

The various quotients of the $G$-action are determined in \cite{menag}. We denote the quotients of $N$ by $r_x$ and by $G$ as $N_x$ and $N_G$, and similarly for the other quotients. The quotients other than for $r_x$ are standard: $R^*_y\approx R^*_z\approx R^*_G\approx N_y\approx N_G\approx\R^4$ and $N_z\approx{\C P^2}-\{p\}$. The induced involutions on the $\Z_2$-quotients, and hence their induced maps to $R^*_G$ and $N_G$ are also standard (complex conjugation in the case of $N_z$). The involution $r_y$ on $N=R\# S^2\times S^2$ restricts to a topologically standard involution on $R$, yielding a topologically standard branched covering $R\to\R^4$ of the standard $\R^4$ by a small exotic $\R^4$, as first observed by Freedman. (Surgering $N$ to $R$ does not change the quotient, but surgers the branch locus from a punctured torus to an exotic plane; see Section~\ref{DrawPlanes}.) The quotient $N_x$ is shown in Figure~\ref{N}(b) (where the $-1$-framing is the blackboard framing), and its complement in the quotient $\overline{\C P^2}$ of $S^2\times S^2$, extended by the topological collar $U_x$, is $R^*_x$. The figure exhibits $N_x$ as a Casson handle attached to $B^4$ along a $-1$-framed unknot. (The first stage is explicitly drawn and isotopic to the mirror image of Figure~\ref{double}(a) by symmetry of the Whitehead link, so it has a negative double point. The higher stages are given by $2CH$.) For $CH$ sufficiently ramified, $N_x$ embeds in $\overline{X}=\overline{\C P^2}\# k\C P^2$ representing the class $\alpha$ as in the proof of Theorem~\ref{CHclass} (with reversed orientation). Then $N_x$ is an exotic open Hopf bundle with no smoothly embedded sphere generating its homology. As in Section~\ref{R4}, $R_x^*$ then has the same end as a nondiagonalizable, positive definite 4-manifold (made from $\overline{X}$ by deleting a compact subset of $N_x$) so it is a large exotic $\R^4$ with a compact subset that cannot embed in any closed, positive definite 4-manifold, and $U_x=N_x\cap R^*_x$ has no 3-sphere separating its ends. Furthermore, $R^*_x$ lies in an uncountable family of pairwise nondiffeomorphic $\R^4$-homeomorphs obtained by varying the thickness of its topological collar $U_x$. The map $R^*_x\to R^*_G$ is a topologically standard branched covering map from a large exotic $\R^4$ to the standard $\R^4$. (In addition, the map $R^*\to R^*_x$ is a topologically standard branched covering from a small exotic $\R^4$ to a large $\R^4$, showing that the large/small dichotomy does not have a simple relationship with such branched covers. It is still an open question whether the standard $\R^4$ has such a map to an exotic one.) 

\subsection{Families of exotic planes}\label{planeFamilies}
We now have a supply of exotic but topologically standard double branched coverings of $\R^4$, whose branch loci in $\R^4$ must then be exotic planes. Recall from the end of Section~\ref{Sums} that double branched covering induces a monoid homomorphism from isotopy classes of topologically standard planes to $\R^4$-homeomorphs, which descends to their partially ordered quotient monoids (where the equivalence and partial order for planes are defined preceding Theorem \ref{order}). This immediately allows us to apply exotic $\R^4$ theory to exotic planes. We first prove Theorem~\ref{order}, constructing  uncountable families of exotic planes that are well-behaved under the partial order, then consider the end sum operation. Subsequent subsections expand these ideas to more general knotted surfaces and to exotic planes with group actions.

\begin{proof}[Proof of Theorem~\ref{order}]
When we vary $CH$ in Section~\ref{Branch1}, the resulting small $\R^4$-homeomorphs $R$ range over uncountably many diffeomorphism types. These realize uncountably many diffeomorphism types of ends since only countably many manifolds can have a given end. The branch loci of the corresponding involutions $r_y$ on $R$ must then realize uncountably many isotopy classes of exotic planes $P$ in $\R^4$ (which we call {\em simple}), determining uncountably many classes of exotic annuli. In Section~\ref{DrawPlanes}, we will see that these planes are all generated by unknots, which will complete the proof of Theorem~\ref{0g}(a). Embedding each $CH$ in a 2-handle $r_y$-equivariantly embeds $R$ in $\R^4$ without enlarging the branch locus, showing that $P\le\R^2$. Since $(\R^4,\R^2)$ embeds in every $(X,F)$ with $F\approx\R^2$, restricting to a diffeomorphism $\R^2\to F$, $P$ is equivalent to the standard plane as required to prove Theorem~\ref{order}(a).

While the large exotic $R_x^*$ lies in a pairwise nondiffeomorphic family parametrized by an interval, it requires more care to construct an uncountable family with quotients $R^*_G$ identified as $\R^4$. We expand the argument identifying $R^*_G$ in \cite{menag}. Let $C\subset CH$ be a compactum as in the proof of Theorem~\ref{almostSmooth}, so that $C$ is cellular in any 4-manifold  $X$ whose interior contains $CH$. We can now locate $R_x^*$ in Figure~\ref{N}(b) as the complement in $\overline{\C P^2}$ of a compactum obtained from a thinner version $H$ of the given handlebody (ignoring the fine circle) by attaching a copy of $C$ inside each copy of $CH$. (The proof in \cite{menag} used an almost-smooth core disk in place of $C$ but was otherwise the same.) The remaining involution is given in Figure~\ref{N}(b) by $\pi$-rotation about the $z$-axis and preserves $R^*_x$. Its branched covering map sends $\overline{\C P^2}$ to $S^4$ and each pictured handle of $H$ to a 4-ball attached to a previous 4-ball along a 3-ball in its boundary. (For example, the attaching region of the 2-handle is a solid torus covering  a 3-ball branched along a trivial pair of arcs.) The two copies of $CH$ are identified to a single copy, attached along half of its attaching region to the 4-ball $B$ comprising the image of $H$, so $R_G^*=S^4-(B\cup C)\approx S^4-C\approx S^4-\{p\}\approx\R^4$. Varying the parameter over $\Sigma$ as in Section~\ref{R4} now gives a $\Z_2$-invariant nested family of pairwise nondiffeomorphic, large exotic $\R^4$-homeomorphs $R^*_x$ whose quotients under the topologically standard involution are $\R^4$.

Theorem~\ref{order}(b) and (c) now follow from the exotic $\R^4$ theory discussed in Section~\ref{R4}. For $t\in \Sigma$, let $R_t$ be the corresponding large exotic $\R^4$, and let $P_t\subset\R^4$ be the corresponding branch locus. For $t<t'$, the inclusion $R_t\subset R_{t'}$ was constructed to have compact closure (for the periodic end argument). However, the fixed set of the involution on $R_x^*$ avoids the Casson handles $CH$ (although it intersects the 1-handle separating them in $2CH$) so after an equivariant isotopy we can assume the inclusion sends the fixed set of $R_t$ onto that of $R_t'$. This shows $P_t\le P_{t'}$. If also $s\le s'$ in $\Sigma$, we then have $P_s\natural \overline{P_t}\le P_{s'}\natural \overline{P_{t'}}$, where the bar denotes reversed ambient orientation. If $s>s'$ or $t>t'$, $R_s\natural \overline{R_t}$ cannot embed in $R_{s'}\natural \overline{R_{t'}}$, so the map from $\Sigma\times\Sigma$ to equivalence classes of exotic planes is an order-preserving injection, proving (b). If we replace the exotic planes $\overline{P_t}$ in this family by those coming from (a), the second summand no longer affects the resulting equivalence classes, which are then bijectively parametrized by $s\in\Sigma$, preserving order. For fixed $s$, the corresponding $\R^4$-homeomorphs still realize uncountably many diffeomorphism types \cite[Section~1]{menag} (since $R_s$ embeds in $\overline{\C P^2}$ so the end-periodic h-cobordism argument still applies, Section~\ref{R4}). Thus, we obtain uncountably many exotic planes in each equivalence class, proving (c).
\end{proof}

Sharpening our techniques, we now exhibit exotic planes with infinite order under end sum. While this is not surprising, it provides uncountable families that we will subsequently show have arbitrarily large $g^\infty$ (Corollary~\ref{ginf}). The same proof shows that some exotic planes have a certain rigidity, and shows in Section~\ref{Surf} that many proper 2-knots have infinitely many exotic cousins.

\begin{thm}\label{sum}
There is an exotic plane $P$ such that for the $n$-fold sums $P_n=\natural_n P$, the set $\{P_n|n=0,1,2,\cdots,\infty\}$ has the order type of its index set. In particular, the sums $P_n$ are all distinct. For every $n\in\Z^+$, there is a compact subset $K$ of $\R^4$ such that no pairwise self-diffeomorphism of $(\R^4,P_n)$ sends $K$ into $\R^4-K$. (This clearly fails for $n=0,\infty$.) There is an uncountable family of such planes $P$ indexed by $\Sigma$ such that, for each fixed $n\in\Z^+$, the resulting planes $P_n$ are all distinct, with order type given by $\Sigma$.
\end{thm}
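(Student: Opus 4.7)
I transfer Theorem~\ref{sum} to an analogous statement about iterated end sums of a large exotic $\R^4$-homeomorph via the monoid homomorphism on topologically standard planes induced by double branched covering (end of Section~\ref{Sums}), under which the partial order $\le$ corresponds to smooth inclusion of branched covers. A Donaldson-style rank obstruction, in the spirit of Theorem~\ref{CHclass} and Theorem~\ref{order}, then distinguishes the resulting family.

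\textbf{Setup and order type.} Let $P = P_t$ be an exotic plane produced in the proof of Theorem~\ref{order}(b), whose double branched cover is a large exotic $\R^4$-homeomorph $R = R^*_t$ smoothly embedded as an open subset of $\overline{\C P^2}$ (cf.\ Section~\ref{Branch1}). Choose a compact subset $K_0 \subset R$ whose inclusion into $\overline{\C P^2}$ carries the generator of $H_2(\overline{\C P^2}) = \Z$, so that any smooth embedding $K_0 \emb Y$ into a closed simply connected 4-manifold realizes a class of self-intersection $-1$. Then $P_n = \natural_n P$ has double branched cover $R_n := \natural_n R$, which smoothly embeds as an open subset of $\#_n\overline{\C P^2}$ (by end-summing the inclusions $R\subset\overline{\C P^2}$) and contains $n$ pairwise disjoint canonical copies of $K_0$, one per summand. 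The forward inequalities $P_n \le P_m$ for $n \le m \le \infty$ follow from monoid-compatibility of $\le$ together with $\R^2 \le P$ (via the tubular neighborhood of $P$): writing $P_m = P_n \natural (\natural_{m-n} P)$ and using $\R^2 \le \natural_{m-n}P$, one gets $P_n = P_n \natural \R^2 \le P_n \natural (\natural_{m-n}P) = P_m$. For strictness, suppose $P_m \le P_n$ with $m > n$. Lifting yields $R_m \emb R_n$, placing $m$ disjoint copies of $K_0$ in a compact subset $C \subset R_n \subset \#_n\overline{\C P^2}$; the resulting $m$ pairwise orthogonal classes of square $-1$ in $H_2(\#_n\overline{\C P^2}) = \Z^n$ span a $-I_m$ sublattice, which cannot embed in $-I_n$ for $m > n$. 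An identical argument with infinitely many copies handles $P_\infty \not\le P_n$ for any finite $n$, yielding the required order type.

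\textbf{Rigidity and uncountable family.} For $1 \le n < \infty$, take $K \subset \R^4$ to be the image of one canonical copy of $K_0$ under the branched covering projection $R_n \to \R^4$. A pairwise self-diffeomorphism $\phi$ of $(\R^4, P_n)$ with $\phi(K) \subset \R^4 - K$ lifts to $\tilde\phi: R_n \to R_n$ with $\tilde\phi(K_0) \cap K_0 = \emptyset$; iterating yields arbitrarily many pairwise disjoint copies of $K_0$ in $R_n$, and any $n+1$ of them sit inside some compact subset of $R_n \subset \#_n\overline{\C P^2}$, reproducing the previous contradiction. The failure cases $n = 0$ (translate $\R^2$ parallel to itself) and $n = \infty$ (index-shift on the infinite end sum, available by Proposition~\ref{infSum}) are clear. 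For the uncountable family, repeat the construction with $P = P^s$ for $s \in \Sigma$ from Theorem~\ref{order}(b); the forward inequalities $(P^s)_n \le (P^{s'})_n$ for $s \le s'$ are inherited from the nested $\Sigma$-family inclusions end-summed $n$ times, while the reverse nonembeddings follow from the DeMichelis--Freedman end-periodic h-cobordism argument applied across the $n$ end-summed pieces. The main obstacle is verifying the rank obstruction rigorously: one must check that $m$ disjoint embeddings of $K_0$ in $\#_n\overline{\C P^2}$ genuinely span an $m$-dimensional negative definite sublattice of $H_2$, which requires $K_0$ to rigidly realize the $\overline{\C P^2}$-generator under any smooth embedding into a closed 4-manifold; this is arranged by including the first stages of the Casson handles and their dual surfaces from the construction of Section~\ref{Branch1}.
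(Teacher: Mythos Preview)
Your approach has a fundamental gap at its core. The double branched cover $R=R^*_t$ is homeomorphic to $\R^4$, so $H_2(R)=0$. Since the inclusion $K_0\hookrightarrow\overline{\C P^2}$ factors through $R$, the induced map $H_2(K_0)\to H_2(\overline{\C P^2})$ is zero; no compact subset of $R$ can carry the generator of $H_2(\overline{\C P^2})$. In the construction of Section~\ref{Branch1}, the homology of $\overline{\C P^2}$ is carried by the Casson-handle piece $N_x$, and $R^*_x$ is precisely its complement (extended by a collar). Your attempted fix at the end---``including the first stages of the Casson handles and their dual surfaces''---puts $K_0$ in the wrong side of this decomposition. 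Consequently your rank obstruction (a $-I_m$ sublattice in $H_2(\#_n\overline{\C P^2})$) never materializes, and both the order-type argument and the rigidity argument collapse.

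The paper's proof circumvents exactly this issue. Rather than extracting homology from inside $R^*_x$, it arranges (by working in $\overline{\C P^2}\#16\C P^2$) that the \emph{end} of $R^*_x$ agrees with that of an \emph{even}, positive definite, simply connected manifold, and then embeds $R^*_x$ in a closed \emph{spin} manifold $Z$ obtained by doubling. The obstruction to $R_m\hookrightarrow R_n$ for $m>n$ is then Furuta's $10/8$-type bound: cutting and pasting disjoint copies of $R^*_x$ into $\#_nZ$ would produce closed spin 4-manifolds with $b_-$ bounded but $b_+$ arbitrarily large. This is an end/gauge-theoretic obstruction, not a lattice-rank obstruction, and it is essential precisely because $R^*_x$ itself carries no second homology. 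Your citation of the DeMichelis--Freedman argument for the uncountable $\Sigma$-family is also misplaced here; that argument is tailored to the small case, and the paper instead uses the Taubes periodic-end theorem for nondiagonalizable definite ends.
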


\begin{figure}
\labellist
\small\hair 2pt
\pinlabel {branch locus} at 36 4
\pinlabel {0} at 145 90
\pinlabel {0} at 169 90
\pinlabel {0} at 307 90
\pinlabel {$2CH$} at 144 57
\pinlabel {$2CH$} at 174 57
\pinlabel {$2CH$} at 306 57
\pinlabel {$-1$} at 267 25
\endlabellist
\centering
\includegraphics{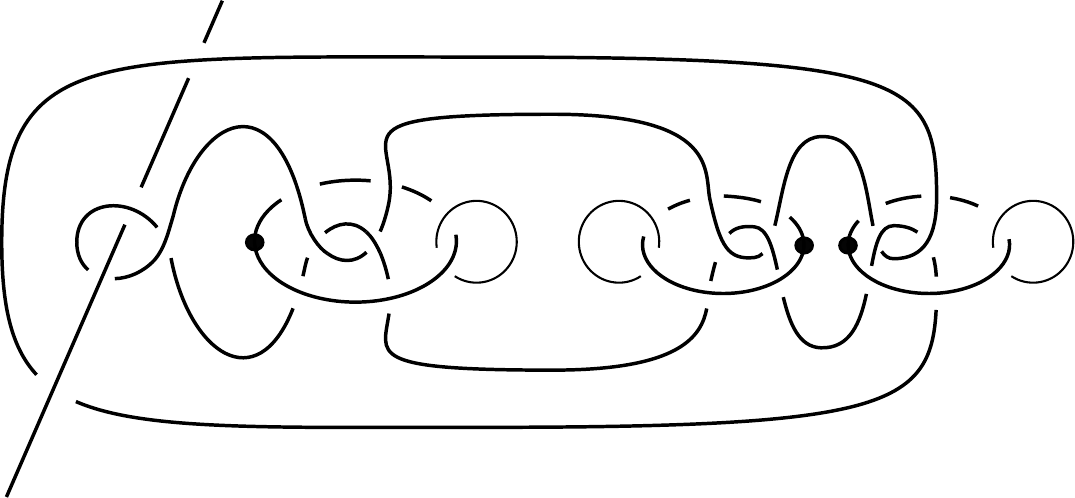}
\caption{A more complicated version of $N_x$.}
\label{Nx}
\end{figure}

\begin{proof}
This follows from the analogous results for $\R^4$-homeomorphs. First, we construct a variation of $R^*_x$. Let $X'=\overline{\C P^2}\# 16\C P^2$. By Freedman's classification \cite{F}, $X'$ splits topologically as $\overline{\C P^2}\# Y$, where $Y$ is a topological manifold with an even, positive definite intersection form of rank 16. (Note that the intersection form of the latter sum is isomorphic to that of $X'$ since both forms are odd and indefinite with the same $b_\pm$.) The latter splitting exhibits a topological $\C P^1$ whose tubular neighborhood can be smoothly exhibited in $X'$ as a Casson handle attached to a 4-ball (as in the proof of Corollary~\ref{cpctSurf}). After refinement, we may assume this neighborhood is diffeomorphic to $N_x$ from Section~\ref{Branch1}. The only complication is that the first stage of the Casson handle, as exhibited in Figure~\ref{N}(b), may require additional double points. We may assume, after further refinement, that these occur in pairs of opposite sign as in Figure~\ref{Nx} (whose first stage is isotopic to the mirror image of Figure~\ref{double}(c)). This similarly extends the diagram of the double branched cover in Figure~\ref{N}(a), which is in turn realized by making finger moves in the middle level of the h-cobordism. The discussion in Section~\ref{Branch1} then applies as before (for $CH$ sufficiently ramified) with the end of $R^*_x$ agreeing with that of an {\em even}, positive definite, simply connected 4-manifold. Since $R_x^*$ is embedded with compact closure in a larger exotic $\R^4$ in $\overline{\C P^2}$, it lies in a compact submanifold $Q$ of $\overline{\C P^2}$ whose double $Z=Q\cup_{\id_\partial}\overline Q$ is a closed, spin 4-manifold containing $R_x^*$. It follows that the manifolds $R_n=\natural_n R_x^*$ for $n=0,1,2,\cdots,\infty$ are ordered like their index set: If $R_m$ embeds in $R_n$ with $m>n$, then $R_n$ contains disjoint copies of $R_n$ and $R_1$. Iterating, we can find arbitrarily many disjoint copies of $R_1$ in $R_n\subset \# _nZ$. Since the (simply connected) end of $R_1=R^*_x$ agrees with that of a positive definite spin manifold, cutting and pasting gives a closed, spin 4-manifold with $b_-=nb_-(Z)$ fixed and $b_+$ arbitrarily large, contradicting a theorem of Furuta \cite{Fu}. By the previous proof, $R_x^*$ varies over a $\Z_2$-invariant, $\Sigma$-indexed family whose quotients are  diffeomorphic to $\R^4$. (Replace $B\cup C$ by a boundary sum $C_k=B\natural kC$, where  Figure~\ref{Nx} has $k$ copies of $2CH$. This is still cellular.) For each $n\in\Z^+$, the corresponding family of manifolds $R_n$ is nested with the order type of $\Sigma$. (Otherwise we could put a periodic end on a nondiagonalizable, definite manifold constructed as an $n$-fold end sum).

The theorem now follows easily. Set $P$ equal to the branch locus in $R^*_G\approx\R^4$ generating $R^*_x$. For $m\le n$ we have $\R^2\le P_{n-m}$, so $P_m\approx P_m\natural\R^2\le P_m\natural P_{n-m}=P_n$. For $m>n$ this inequality cannot hold since double branched covering preserves order. The $\Sigma$-ordered families follow similarly. (As in the previous proof, the branch locus of $R^*_x$ avoids the copies of $CH$.) To construct $K$ of the theorem, note the previous paragraph shows that for fixed $n\in\Z^+$, there is a finite upper bound on the number of disjoint copies of $R_1$ that can be simultaneously embedded in $R_n$. Let $K'\subset R_1$ be a compact submanifold containing a slightly smaller exotic $\R^4$ whose end still agrees with the end of an even definite manifold. Then the same argument bounds the number of disjoint copies of $K'$ in $R_n$. Let $\tilde{K}$ be a compact submanifold of $R_n$ containing the maximal number of copies of $K'$. Then $\tilde{K}$ cannot embed in $R_n-\tilde{K}$. Let $K\subset\R^4$ be the image of $\tilde{K}$ under the branched covering. Then no diffeomorphism of $(\R^4,P_n)$ can send $K$ into $\R^4-K$.
\end{proof}

\subsection{Knotted surfaces}\label{Surf}
The proof of Theorem~\ref{sum} also shows that many other surfaces in 4-manifolds are topologically isotopic to infinitely many distinct embeddings. In fact, we will see (Corollary~\ref{slice}) that this holds for the interior of any compact surface embedded rel nonempty boundary in $B^4$. Recall from Section~\ref{Sums} that an end sum $(X,F_1)\natural (\R^4,F_2)$ can naturally be written as $(X,F_1\natural F_2)$, uniquely up to smooth isotopy if each $F_i$ has a unique end and finite genus. If $F_i$ has several ends, we may need to choose one to specify the isotopy (or diffeomorphism) type, and if it has infinitely many ends or infinite genus, we may need to specify the defining ray in $F_i$.

\begin{thm}\label{2knot}
Let $F\subset X$ be a noncompact surface embedded in a 4-manifold, with a double branched cover $\tilde{X}$ embedding in a compact 4-manifold $W$ (possibly with boundary).

(a) If $W$ is spin, then for any family $\{P_n|n=0,1,2,\dots,\infty\}$ as constructed in Theorem~\ref{sum}, the pairs $(X,F_n)=(X,F)\natural(\R^4,P_n)$ (using fixed auxiliary data for $F$ if needed) are all topologically isotopic to $(X,F)$, but no two pairs are diffeomorphic.

(b) If $W$ is instead closed, simply connected and definite, then $(X,F)$ is topologically isotopic to uncountably many nondiffeomorphic pairs of the form $(X,F_t)=(X,F)\natural(\R^4,P)$, where $P$ (up to orientation) varies over simple exotic planes as constructed in the previous sections for Theorem~\ref{order}(a).
\end{thm}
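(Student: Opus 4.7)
Plan: Both parts reduce to questions about $\R^4$-homeomorphs by passing to double branched covers, exploiting that double branched covering commutes with end sums (as noted at the end of Section~\ref{Sums}). The topological isotopy statement in each part is immediate, since each summand $(\R^4, P_n)$ or $(\R^4, P)$ is topologically standard; the real content is the distinction of diffeomorphism classes.

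For part (a), the double branched cover of $(X, F_n)$ is $\tilde{X} \natural R_n$ with $R_n = \natural_n R_x^*$ as in Theorem~\ref{sum}. If $(X, F_m) \approx (X, F_n)$ for some $m < n$, the induced diffeomorphism of covers gives $\tilde{X} \natural R_m \approx \tilde{X} \natural R_m \natural R_{n-m}$, and iterating in the Eilenberg swindle style of Theorem~\ref{sum} exhibits arbitrarily many disjoint copies of $R_x^*$ inside $\tilde{X} \natural R_m$. For any $k$, a compact submanifold containing $k$ such copies embeds in a 4-manifold built from a compact piece of $\tilde{X} \subset W$ together with a compact piece of $R_m \subset \#_m Z$, where $Z$ is the closed spin 4-manifold of Theorem~\ref{sum}. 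Doubling $W$ along its boundary if necessary, this closes up to a spin 4-manifold containing all $k$ copies of $R_x^*$. Applying the cut-and-paste of Theorem~\ref{sum} to each copy---replacing $R_x^*$ by the positive definite spin manifold sharing its end---yields a closed spin 4-manifold with $b_+$ growing linearly in $k$ and $b_-$ bounded by a constant independent of $k$. Letting $k \to \infty$ contradicts Furuta's $10/8$ theorem.

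For part (b), the double branched cover of $(X, F_t)$ is $\tilde{X} \natural R_t$, where $R_t$ varies over the uncountable family of small exotic $\R^4$-homeomorphs underlying the simple planes of Theorem~\ref{order}(a). These $R_t$ are distinguished among themselves by the end-periodic h-cobordism argument (DeMichelis--Freedman), since each embeds in $\overline{\CP^2}$. If $\tilde{X} \natural R_s \approx \tilde{X} \natural R_t$ for some $s \ne t$ in the nested family, the plan is to combine this diffeomorphism with the nesting $R_s \subset R_t$ and the embedding $\tilde{X} \subset W$ (closed, simply connected, and definite) to reconstruct a smooth end-periodic 4-manifold with definite intersection form, to which Donaldson's theorem applies in parallel with the DeMichelis--Freedman argument distinguishing the $R_t$'s. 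The rigidity so obtained means each diffeomorphism class of $\tilde{X} \natural R_t$ is realized by only countably many $t$, yielding uncountably many nondiffeomorphic pairs.

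The main obstacle in (a) is the careful bookkeeping of spin structures when assembling $W$, $\#_m Z$, and the capping pieces into a single closed spin 4-manifold. The main obstacle in (b) is isolating the $R_t$-end within $\tilde{X} \natural R_t$---using that $\tilde{X}$ embeds in the compact $W$ to confine its other ends---so that the periodic end argument genuinely registers the distinction between $R_s$ and $R_t$.
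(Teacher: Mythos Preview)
Your proposal is correct and follows essentially the same route as the paper: both parts pass to double branched covers and invoke, respectively, the Furuta argument from Theorem~\ref{sum} and the end-periodic h-cobordism argument (the paper simply cites \cite[Lemma~7.3]{MinGen} for the latter). One small recalibration: the technical issue you flag in (a) is not really spin bookkeeping but rather arranging that the end sum $\tilde{X}\natural R_n$ sits inside the closed spin manifold $W\#\,nZ$; the paper handles this by noting that if the set-theoretic boundary of $\tilde{X}$ in $W$ is bad, one can delete a tubular neighborhood of a ray from $\tilde{X}$ to obtain a new embedding with smooth 3-manifold boundary, after which the end sum can be formed ambiently.
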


\begin{proof}
For (a), assume after doubling if necessary that $W$ is closed. The proof of Theorem~\ref{sum} (with an extra $\tilde{X}$ summand) shows that for $m>n$, there is no embedding $(X,F_m)\emb(X,F_n)$ sending $F_m$ onto $F_n$, so the pairs are pairwise nondiffeomorphic. (If the original embedding $\tilde{X}\subset W$ has a sufficiently complicated set-theoretic boundary, we may have to improve the embedding by deleting a tubular neighborhood of a ray from $\tilde{X}$. This creates a new embedding of $\tilde X$ with some smooth 3-manifold boundary so that we may construct the end sum $\tilde X\natural R_n$ inside the spin manifold $W\# nZ$.) For (b), reverse orientation on $X$ if necessary so that $W$ is negative definite and orient the planes $P$ as usual. Then the double branched covers (for any choices of auxiliary data) have the form $\tilde{X}\natural R$ with $\tilde{X}\subset W$ and $R$ a small exotic $\R^4$ as in Section~\ref{Branch1}. These represent uncountably many diffeomorphism types by \cite[Lemma~7.3]{MinGen}. (That lemma followed from the end-periodic h-cobordism method with a negative definite end, cf.~Section~\ref{R4}. It used a slightly different version of $R$, but the difference does not affect its proof.)
\end{proof}

\begin{Remarks}\label{2knotRem}
The above proof actually shows that (b) remains true under the weaker hypothesis that each compact, codimension-0 submanifold of $\tilde{X}$ lies in some $W$ as given. However, the simple connectivity hypothesis is essential for the proof since a nontrivial $\pi_1(W)$ would show up in the end of the associated end-periodic manifold, obstructing the required theorem from gauge theory. (To see a typical difficulty, note that the interior of the $E_8$-plumbing violates the Periodic End Theorem if we drop simple connectivity of the end.) Note that (a) only requires enough auxiliary data to determine $\tilde X\natural R_n$. For example, no such data is needed if $\tilde X$ is simply connected at infinity (or its unique end is Mittag-Leffler \cite{CG}).
\end{Remarks}

\begin{cor}\label{slice}
Suppose $F$ is a compact (orientable) surface embedded (smoothly) in the 4-ball with $F\cap\partial B^4=\partial F\ne\emptyset$. Then there are infinitely many surfaces in $\inter B^4\approx\R^4$ topologically but not smoothly isotopic to $\inter F$.
\end{cor}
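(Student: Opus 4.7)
The plan is to apply Theorem~\ref{2knot}(a) directly to the noncompact surface $\inter F \subset \inter B^4 \approx \R^4$. Its double branched cover is $\inter\tilde B$, the interior of the compact double branched cover $\tilde B$ of $B^4$ along $F$, so it suffices to exhibit a compact spin 4-manifold $W$ containing $\inter\tilde B$. The natural candidate is $W = \tilde B$ itself, reducing the corollary to showing that $\tilde B$ is spin.

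The spin verification relies on the standard fact that a double cover of a spin 4-manifold branched along an oriented surface whose homology class is divisible by $2$ is itself spin. Here $B^4$ is spin and $H_2(B^4,\partial B^4;\Z) = 0$, so the class $[F]$ vanishes automatically. Concretely, the normal bundle $\nu_F$ has trivial Euler class, so the square-root bundle $\nu_{\tilde F}$ over the branch locus $\tilde F \cong F$ has $c_1 = 0$ and therefore $w_2 = 0$. Since $F$ has nonempty boundary, $H^2(\tilde F;\Z_2) = 0$, giving $w_2(T\tilde F) = 0$ as well. Thus $w_2(T\tilde B)$ vanishes on a tubular neighborhood of $\tilde F$. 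Away from the branch locus, $\tilde B$ double covers the spin manifold $B^4 - F$, so $w_2$ pulls back from $0$. A matching of spin structures on the separating circle bundle (each side gives the bounding framing on the meridian fibers, since each meridian of $F$ bounds a normal disk in $\tilde B$) then produces a global spin structure on $\tilde B$.

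Once $\tilde B$ is known to be spin, I invoke Theorem~\ref{2knot}(a) with $W = \tilde B$ and the family $\{P_n\}_{n=0}^\infty$ from Theorem~\ref{sum}, fixing any auxiliary data needed to specify a single end of $\inter F$ (for instance, when $F$ has multiple boundary components or is disconnected). Since $P_0 = \R^2$ gives $F_0 = \inter F$, and the theorem asserts that the pairs $(\R^4, F_n) = (\R^4,\inter F) \natural (\R^4, P_n)$ are pairwise nondiffeomorphic yet all topologically isotopic to $(\R^4,\inter F)$, the subfamily $\{F_n : n \geq 1\}$ supplies the required infinitely many surfaces topologically but not smoothly isotopic to $\inter F$. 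The main obstacle is the spin verification; once this is in hand, the rest is an immediate invocation of Theorem~\ref{2knot}(a).
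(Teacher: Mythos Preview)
Your overall strategy matches the paper's: reduce to showing the branched double cover $\tilde B$ is spin, then invoke Theorem~\ref{2knot}(a). The gap is in your spin verification, specifically the matching step.

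You correctly observe that $w_2$ vanishes on a tubular neighborhood $N$ of $\tilde F$ (since $N$ retracts to a surface with boundary) and on the complement $\tilde E$ (since it covers the spin manifold $B^4-F$). But your parenthetical claim that ``each side gives the bounding framing on the meridian fibers'' is false on the $\tilde E$ side. The meridian $\tilde\mu$ of $\tilde F$ double covers the meridian $\mu$ of $F$, and the spin structure on $E=B^4-F$ restricts to the bounding spin structure on $\mu$ (since $\mu$ bounds a normal disk in $B^4$). However, the pullback of the bounding spin structure on $S^1$ through a degree-two self-cover is the \emph{non-bounding} (Lie group) spin structure. Thus the pulled-back spin structure on $\tilde E$ restricts to the non-bounding structure on $\tilde\mu$, while the spin structure on $N$ gives the bounding one. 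They do not match, and the paper explicitly says so: ``This spin structure does not extend over the lifted branch locus $\tilde F$.''

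The paper's fix is exactly what is missing from your argument: since $H_1(E)\cong\Z$ (for $F$ connected and orientable) and $\pi_1(\tilde E)$ maps onto $2\Z$, there is a class in $H^1(\tilde E;\Z_2)$ pairing nontrivially with $\tilde\mu$. Twisting the pulled-back spin structure on $\tilde E$ by this class flips the framing on $\tilde\mu$ to bounding, after which the spin structures do match and extend over $\tilde B$. Your justification ``since each meridian of $F$ bounds a normal disk in $\tilde B$'' presupposes a global spin structure on $\tilde B$, which is what you are trying to construct.
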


\begin{proof}
It suffices to show that the double cover of $B^4$ branched along $F$ is spin, for then (a) of the theorem applies. The spin structure on $B^4$ lifts to the double cover $\tilde{E}$ of $E=B^4-F$. This spin structure does not extend over the lifted branch locus $\tilde F$. However, $H_1(E)$ is $\Z$ (since $F$ is orientable) with $\pi_1(\tilde{E})$ mapping onto $2\Z$, so there is an element of $H^1(\tilde{E};\Z_2)$ pairing nontrivially with the meridian of $\tilde F$. This modifies the spin structure on $\tilde{E}$ so that it does extend.
\end{proof}

There are also nontrivial slice disks in $B^4$ whose interiors are topologically isotopic to uncountably many embeddings $\R^2\emb\R^4$ by (b), for example with $W$ the double of a contractible branched double cover. These results suggest the utility of separating the study of smooth proper 2-knots in general from that of exotic planes. Recall that the topologically standard proper 2-knots comprise a submonoid of all proper 2-knots (as well as of various other monoids of embedded surfaces). Thus, the submonoid has an ``action" whose orbit space is obtained by calling two proper 2-knots equivalent if they become isotopic after end sum with suitable exotic planes. The orbit space has a well-defined forgetful map into the monoid of topological proper 2-knots up to topological isotopy. This is surjective by Proposition~\ref{smoothing}.

\begin{ques}\label{modplanes}
How far is this forgetful map from being injective?
\end{ques}

\noindent See also Questions~\ref{univ}. The question can similarly be formulated for all noncompact pairs $(X,F)$.

\subsection{Genus at infinity and the Taylor invariant}\label{Taylor}
The proof of Theorem~\ref{sum} allows us to more deeply understand exotic annuli and singularities of surfaces, including the structure of their radial functions, using $g^\infty$ and the {\em Taylor invariant} \cite{Ta}. For simplicity, we use the following variant of the latter:

\begin{de}\label{gammaStar}
For a spin 4-manifold $V$, define $\gamma^*(V)\in\Z^{\ge0}\cup\{\infty\}$ to be the smallest $b$ such that every compact, codimension-0 submanifold $Q$ of $V$ embeds in a closed, spin 4-manifold with $b_+ = b_- \le b$.
\end{de}

\noindent We will not need the actual Taylor invariant $\gamma(V)$, whose definition is more technical but applies to all 4-manifolds. (It focuses on those compact subsets $Q$ that lie in suitable 4-balls topologically embedded in $V$.) But it is immediate from the definitions that $\gamma(V)\le\gamma^*(V)$ with equality whenever $V$ is homeomorphic to $\R^4$, so we will use the two interchangeably in the latter case. We immediately obtain some useful properties:

\begin{prop}\label{taylor}
a) The invariant $\gamma^*$ is nondecreasing under inclusion and subadditive under end sum.
\item[b)]  When $\gamma^*(V)$ is finite, there is a compact $Q\subset V$ such that every open $U$ with $Q\subset U\subset V$ has $\gamma^*(U)=\gamma^*(V)$. 
\item[c)] For the double branched covers $R_n$ of the exotic planes $P_n$ of Theorem~\ref{sum}, $\gamma(R_\infty)$ is infinite, while the other values $\gamma(R_n)$ are finite and nonzero for $n\ne0$ but become arbitrarily large as $n$ increases.
\end{prop}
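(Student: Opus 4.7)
The plan is as follows. Parts (a) and (b) are formal consequences of the definition; (c) invokes the machinery from the proof of Theorem~\ref{sum}. For (a), monotonicity is immediate: any compact codimension-zero submanifold of $V$ remains one of any $V' \supset V$, so the same closed spin 4-manifolds witness the defining condition for the larger invariant. For subadditivity, a compact $Q \subset V_1 \natural V_2$ may be isotoped into a boundary sum $Q_1 \natural Q_2$ with each $Q_i \subset V_i$ compact, by sliding across the 1-handle at infinity. Choose closed spin $W_i \supset Q_i$ with $b_+(W_i) = b_-(W_i) \le \gamma^*(V_i)$. Then $Q \subset Q_1 \natural Q_2 \subset W_1 \# W_2$, which is closed spin with $b_+ = b_-$ summing. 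For (b), observe that $\gamma^*(V) = \sup_Q \beta(Q)$, where $\beta(Q)$ is the smallest $b$ witnessing the defining condition for $Q$. Since $\beta$ is $\Z^{\ge 0}$-valued and the supremum equals the finite integer $\gamma^*(V)$, it is attained by some compact $Q$. For any open $U$ with $Q \subset U \subset V$, the inequalities $\beta(Q) \le \gamma^*(U) \le \gamma^*(V) = \beta(Q)$ (the latter by (a)) collapse to equality.

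For (c), the upper bound comes from the proof of Theorem~\ref{sum}: $R_1 = R_x^*$ embeds with compact closure in a compact spin $Q \subset \overline{\C P^2}$, and its double $Z = Q \cup_{\id_\partial} \overline Q$ is closed spin with $b_+(Z) = b_-(Z)$, since every double has signature zero. Hence $\gamma^*(R_1) \le b_+(Z) < \infty$, and subadditivity gives $\gamma^*(R_n) \le n \gamma^*(R_1) < \infty$ for each finite $n$. To show $\gamma^*(R_n) \to \infty$, I argue by contradiction: if $\gamma^*(R_n) \le b$ for all $n$, choose a compact $K_n \subset R_n = \natural_n R_1$ containing $n$ disjoint compact pieces $C^{(i)} \subset R_1^{(i)}$, one from each summand, each enlarged so that its outer boundary $\Sigma^{(i)} = \partial C^{(i)}$ is a smooth 3-manifold that also bounds a fixed compact positive definite spin 4-manifold $Y_0$; this uses precisely the end-matching property of $R_x^*$ invoked in the proof of Theorem~\ref{sum}. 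Embed $K_n$ in a closed spin $W$ with $b_\pm(W) \le b$, and perform the cut-and-paste of that proof: cut $W$ along each $\Sigma^{(i)}$ and replace the end-side component by a copy of $Y_0$. The resulting closed spin $W'$ has $b_-(W')$ bounded independent of $n$ but $b_+(W') \ge n \, b_+(Y_0) \to \infty$, violating Furuta's $10/8$ inequality \cite{Fu} for $n$ large.

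Positivity $\gamma^*(R_n) \ge 1$ for $n \ne 0$ is then automatic: $\gamma^*(R_1) = 0$ would force $\gamma^*(R_n) = 0$ for all $n$ by subadditivity, contradicting the growth just established. For $R_\infty$, associativity and commutativity of infinite end sum (Proposition~\ref{infSum}) give smooth embeddings $R_n \emb R_\infty$ for every finite $n$, so monotonicity (a) yields $\gamma^*(R_\infty) \ge \gamma^*(R_n) \to \infty$ and hence $\gamma^*(R_\infty) = \infty$. Since $R_\infty$ is homeomorphic to $\R^4$ as the double branched cover of the topologically standard $P_\infty$, we conclude $\gamma(R_\infty) = \gamma^*(R_\infty) = \infty$ via the remark after Definition~\ref{gammaStar}. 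The main obstacle is the cut-and-paste step in (c): one must verify that inside an arbitrary closed spin $W \supset K_n$, rather than inside $\#_n Z$ as in the proof of Theorem~\ref{sum}, each $\Sigma^{(i)}$ really bounds a compact positive definite spin piece $Y_0$ whose gluing replaces a neighborhood of the end of $R_1^{(i)}$ smoothly. This propagates from the end-matching property of $R_1$ itself, since the relevant gluings are local to a collar of $R_1^{(i)}$ in $W$ and $Y_0$ is determined intrinsically by $R_1$, independent of $W$.
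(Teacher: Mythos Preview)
Parts (a) and (b) are correct and essentially identical to the paper's proof. For (c), the overall strategy matches the paper's, but you have the cut-and-paste operation reversed. You write ``cut $W$ along each $\Sigma^{(i)}$ and replace the end-side component by a copy of $Y_0$,'' and later ``replaces a neighborhood of the end of $R_1^{(i)}$.'' Taken literally, replacing the end-side of a single $\Sigma^{(i)}$ discards all of $W\setminus \inter C^{(i)}$---including the other summands $C^{(j)}$---and yields the fixed closed manifold $C^{(i)}\cup_{\Sigma^{(i)}} Y_0$, whose invariants do not grow with $n$. The correct move (as in the paper) is the opposite: excise each compact $C^{(i)}$ and glue $Y_0$ in its place, forming
\[
W'=\bigl(W\setminus\textstyle\bigcup_i\inter C^{(i)}\bigr)\cup_{\,\bigcup_i\Sigma^{(i)}}\ \textstyle\bigsqcup_i Y_0.
\]
Since the $C^{(i)}$ are disjoint this is well defined, and $W'$ now genuinely carries $n$ copies of the even definite piece $Y_0$, so Furuta's inequality bounds $n$ in terms of $b_\pm(W)$.

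This reversal also dissolves the ``main obstacle'' you flag at the end. Because the operation is entirely local to the $C^{(i)}$'s---excise, then glue in an abstractly given $Y_0$ along $\Sigma^{(i)}=\partial C^{(i)}=\partial Y_0$---nothing about the ambient $W$ beyond the inclusion $K_n\subset W$ is used; the argument is identical whether $W=\#_n Z$ or an arbitrary closed spin manifold. Your concern that something must ``propagate from the end-matching property of $R_1$'' into $W$ is an artifact of having the surgery on the wrong side of $\Sigma^{(i)}$.
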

 
\noindent Note that (c) again distinguishes infinitely many diffeomorphism types in $\{R_n\}$ and hence in $\{P_n\}$. 

\begin{proof}
a) Nondecreasing behavior is clear. For subadditivity, note that an end sum is exhausted by boundary sums of compact subsets of the summands.

b) Since $\gamma^*(V)$ is finite, there is a $Q$ that admits no embedding as in the above definition with $b<\gamma^*(V)$. Any such $Q$ works by (a).

c) We return to the proof of Theorem~\ref{sum}, which exhibits $R_n$ as $\natural_n R_x^*$, with $R_x^*$ embedded in a closed, spin manifold $Z$ with $b_+=b_-$. The embedding shows that $\gamma(R_x^*)\le b_+(Z)$ is finite, as is $\gamma(R_n)$ (by subadditivity). To see that $\gamma(R_n)$ takes arbitrarily large values, choose $Q\subset R_x^*$ large enough to contain an exotic $\R^4$ with the same end as an even, definite 4-manifold. Then for any finite $b$, Furuta's Theorem gives an upper bound on the number of disjoint copies of $Q$ that can be found in a spin manifold as in Definition~\ref{gammaStar}, forcing $\gamma(R_n)$ to increase without bound. Now $\gamma(R_n)>0$ for all finite $n>0$ by subadditivity, and $\gamma(R_\infty)$ is infinite since $\gamma^*$ is nondecreasing under inclusion.
\end{proof}

We can now analyze $g^\infty(P_n)$ via the following:

\begin{prop}\label{gTaylor}
Let $F\subset\R^4$ be a surface whose end is annular, and let $V$ be the double branched cover. Then $\gamma(V)\le\gamma^*(V)\le g(F)+g^\infty(F)$.
\end{prop}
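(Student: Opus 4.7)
The first inequality is immediate from the definitions, so I need only show $\gamma^*(V)\le g(F)+g^\infty(F)$. I fix a compact, codimension-0 submanifold $Q\subset V$ and plan to exhibit a closed spin 4-manifold containing $Q$ of the required form, built as a double branched cover of $S^4$.

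First I would compactify $\R^4$ to $S^4$, extending $F$ to the almost-smooth closed surface $\hat F\subset S^4$ whose unique singularity lies at $\infty$. Since the image of $Q$ is compact, I choose a neighborhood $U$ of $\infty$ disjoint from that image and small enough that the definition of $g^\infty(F)$ supplies a smoothly embedded oriented surface $F_\infty\subset U$ of genus $g^\infty(F)$ with $\partial F_\infty=\hat F\cap\partial U$. Splicing $F_\infty$ into $\hat F$ in place of the singular disk yields a smooth closed oriented surface $\tilde F\subset S^4$ of genus $g(F)+g^\infty(F)$. Let $M$ be the double cover of $S^4$ branched along $\tilde F$. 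Then $Q$ embeds in $M$, because $\tilde F$ agrees with $F$ outside $U$, so the double branched cover of $S^4-U$ along $F\cap(S^4-U)$ is canonically identified with subsets of both $V$ and $M$. Exactly as in Corollary \ref{slice}, $M$ is spin: twist the pulled-back spin structure by the $\Z_2$-cohomology class dual to a meridian of $\tilde F$. Since $\tilde F$ is null-homologous in $S^4$, $\sigma(M)=0$. Finally $\chi(M)=2\chi(S^4)-\chi(\tilde F)=2+2(g(F)+g^\infty(F))$, and combining with Poincar\'e duality I read off $b_+(M)=b_-(M)=g(F)+g^\infty(F)+b_1(M)$.

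The main remaining issue, and what I expect to be the technical heart of the argument, is to ensure that $b_1(M)=0$. In the principal application --- when $F$ is topologically standard, as for the exotic planes driving this paper --- the surface $\tilde F$ is topologically unknotted in $S^4$, so $\pi_1(S^4-\tilde F)=\Z$ and $H_1(M;\Z)$ is pure torsion, giving $b_1(M)=0$ automatically. In general I would kill $b_1$ by performing $0$-surgeries on a collection of loops in $M-Q$ representing a basis of $H_1(M;\Q)$; each such surgery drops $b_1$ and $b_3$ by one while preserving $b_2$ (hence $b_\pm$), $\sigma=0$, and the spin structure (for a spin framing). The loops are placed in $M-Q$ after slightly enlarging $Q$ so that $H_1(M-Q)\to H_1(M)$ is surjective. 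The outcome is a closed spin 4-manifold $M'\supset Q$ with $\sigma(M')=0$ and $b_+(M')=b_-(M')=g(F)+g^\infty(F)$, proving the bound.
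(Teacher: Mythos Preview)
Your argument is correct and follows the paper's proof closely up through the construction of the closed branched cover $M$ and the computation $\sigma(M)=0$ (the paper obtains this by pushing a Seifert hypersurface into the 5-ball, which is the justification behind your ``null-homologous'' remark). The substantive divergence is in how you reach $b_\pm(M)=g(\tilde F)$. The paper simply invokes Hsiang--Szczarba \cite[Theorem~3.2]{HS}, whose content (made explicit later in the proof of Theorem~\ref{infLocMax}) is a group-theoretic lemma forcing $H_1$ of the branched double cover to be finite, so $b_1(M)=0$ and the Euler characteristic identity finishes. You instead accept that $b_1(M)$ may be nonzero and kill it by spin $0$-surgeries on circles in $M-Q$; since such surgeries preserve $b_2$, $\sigma$, and spin while dropping $b_1$ by one each, this works. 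Your route is more elementary and self-contained (no appeal to \cite{HS}), at the cost of an extra surgery step that takes you outside the class of branched covers. Two small points: you should note at the outset that $V$ is spin (so that $\gamma^*(V)$ is defined and $\gamma(V)\le\gamma^*(V)$ is meaningful), and that one may assume $g^\infty(F)<\infty$ since otherwise the inequality is vacuous.
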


\begin{proof}
Since $V$ is spin (as in the proof of Corollary~\ref{slice}), $\gamma^*(V)$ is defined and satisfies the first inequality. It now suffices to assume $g^\infty(F)$ is finite. Given a compact submanifold $Q$ of $V$, we can modify $(\R^4,F)$ outside of the image of $Q$ to get a closed surface $\hat{F}$ in $S^4$ with genus $g(\hat{F})=g(F)+g^\infty(F)$. The double branched cover $Y$ of $S^4$ along $\hat{F}$ is the required closed, spin manifold containing $Q$ and with $b_+ = b_- = g(\hat{F})$: Since $\hat{F}$ is orientable, it has a Seifert hypersurface. Pushing its interior into the 5-ball and double covering shows that $Y$ bounds so has signature 0. The equalities for $b_\pm$ then follow immediately from Hsiang--Szczarba \cite[Theorem~3.2]{HS}.
\end{proof}

\begin{cor}\label{ginf}
The exotic planes $P_n=\natural_n P$ of Theorem~\ref{sum} (with $P$ fixed) realize infinitely many values of $g^\infty$. Varying $P$ realizes each of infinitely many values of $g^\infty$ by uncountably many exotic planes.
\end{cor}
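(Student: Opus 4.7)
The plan is to combine Proposition~\ref{gTaylor} with the lower bound on the Taylor invariant established in Proposition~\ref{taylor}(c). For the first assertion, fix $P$ as in Theorem~\ref{sum} and let $R_n$ denote the double branched cover of $P_n$. Since $P_n$ is a plane, $g(P_n)=0$, so the inequality of Proposition~\ref{gTaylor} collapses to $\gamma(R_n)\le g^\infty(P_n)$. Proposition~\ref{taylor}(c) says that $\gamma(R_n)$ becomes arbitrarily large as $n$ grows, so the set $\{g^\infty(P_n):n\in\Z^+\}$ is infinite.

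For the second assertion, I would consider the $\Sigma$-indexed family $\{P^{(t)}\}_{t\in\Sigma}$ of exotic planes from Theorem~\ref{sum} and, for each fixed $n\in\Z^+$, the associated uncountable family of pairwise distinct exotic planes $P_n^{(t)}=\natural_n P^{(t)}$ with double branched covers $R_n^{(t)}$. The key step is to secure a lower bound $g^\infty(P_n^{(t)})\ge f(n)$ with $f(n)\to\infty$ that is \emph{uniform} in $t$. The $\Sigma$-family of large exotic $\R^4$-homeomorphs underlying Theorem~\ref{sum} is nested by codimension-0 inclusions (that being the point of the $\Sigma$-parametrization), so I would fix $t_0\in\Sigma$ and choose, inside the smallest member $R^{(t_0)}$, a compact exotic $\R^4$ $Q$ whose end matches that of an even, positive definite, simply connected 4-manifold---precisely the compactum used in the Furuta argument of Theorem~\ref{sum} and Proposition~\ref{taylor}(c). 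Since $Q\subset R^{(t)}$ for every $t\ge t_0$, the end sum $R_n^{(t)}=\natural_n R^{(t)}$ contains $n$ pairwise disjoint copies of $Q$, and the same Furuta-type argument forces $\gamma(R_n^{(t)})\ge f(n)\to\infty$ independently of $t\ge t_0$. Proposition~\ref{gTaylor} promotes this to $g^\infty(P_n^{(t)})\ge f(n)$.

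To finish, for each fixed $n$ view $t\mapsto g^\infty(P_n^{(t)})$ as a map from the uncountable set $\{t\in\Sigma:t\ge t_0\}$ to the countable set $\Z^{\ge 0}\cup\{\infty\}$; pigeonhole selects a value $v_n\ge f(n)$ attained by uncountably many $t$, and the corresponding planes $P_n^{(t)}$ are pairwise distinct by Theorem~\ref{sum}. Since $f(n)\to\infty$, the sequence $v_n$ is unbounded, so the set $\{v_n:n\in\Z^+\}$ is infinite, giving infinitely many values of $g^\infty$ each realized by uncountably many exotic planes. The main obstacle I foresee is precisely the uniformity of the bound: Proposition~\ref{taylor}(c) supplies $\gamma(R_n)\to\infty$ only along a single chosen family, so one must explicitly invoke the nested structure of the $\Sigma$-family from the proof of Theorem~\ref{sum} to confirm that a single compact $Q$ witnesses the Furuta obstruction across every member simultaneously, rather than cite Proposition~\ref{taylor}(c) as a black box.
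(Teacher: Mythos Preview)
Your lower-bound argument is fine, but there is a genuine gap: you never establish that $g^\infty(P_n)$ (or $g^\infty(P_n^{(t)})$) is \emph{finite}. From $\gamma(R_n)\le g^\infty(P_n)$ and $\gamma(R_n)\to\infty$ you can only conclude that $g^\infty(P_n)$ is unbounded, not that it takes infinitely many values. Nothing you have written rules out $g^\infty(P_n)=\infty$ for every $n\ge1$, in which case the set $\{g^\infty(P_n)\}$ is a single point. The same problem recurs in your second part: your pigeonhole step could land on $v_n=\infty$ for every $n$, again yielding only one value.

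The paper addresses this by proving $g^\infty(P)<\infty$ directly, via a geometric argument: the branch locus $P$ sits inside an $\R P^2$ in $S^4$ as the interior of a compact disk $D^*$, with $\partial D^*$ in the manifold part of the boundary of the cellular set $C_k$; one caps $D^*$ by a compact surface of some genus $g$ inside $C_k$ and then uses cellularity to transport this cap back into $R^*_G\approx\R^4$, yielding $g^\infty(P)\le g$. Subadditivity of $g^\infty$ under end sum then gives $g^\infty(P_n)<\infty$ for all finite $n$, and combined with the lower bound this forces infinitely many finite values. For the second assertion the paper uses Proposition~\ref{taylor}(b) (which also requires finiteness of $\gamma$) to stabilize $\gamma(R_n^{(t)})$ for large $t\in\Sigma$, rather than your direct nested-$Q$ argument, but your route to the uniform lower bound would work once finiteness is in hand.
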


\noindent This again shows that no two of the exotic planes $P_n$ are isotopic (for each fixed $P$), since otherwise the monoid structure would show that there were only finitely many isotopy classes.

\begin{proof} We  show below that $g^\infty(P)$ is finite. It follows that $g^\infty(P_n)$ is finite for all $n<\infty$. (More generally, $g^\infty$ is subadditive on end sums.) Since $\gamma(R_n)$ takes arbitrarily large values, the proposition then shows that $g^\infty(P_n)$ takes infinitely many values. Fixing $n\in\Z^+$ and varying $P$ as in Theorem~\ref{sum}, we obtain uncountably many diffeomorphism types of the form $R_n$. For sufficiently large parameter values in $\Sigma$, these all have the same value $\gamma$ of $\gamma(R_n)$ (Proposition~\ref{taylor}(b)). The corresponding uncountable family of exotic planes $P_n$ all have $g^\infty(P_n)\ge \gamma$, so at least one value of $g^\infty\ge\gamma$ is realized by uncountably many such planes. Since increasing $n$ makes $\gamma$ arbitrarily large, the last sentence of the corollary follows.

To verify finiteness of $g^\infty(P)$, recall that the fixed set $\R P^2$ of our involution on $\overline{\C P^2}$ (rotation about the $y$-axis in Figure~\ref{Nx}) is disjoint from the Casson handles $CH$ (although it intersects the 1-handles between them inside each $2CH$), and that the image $R_G^*$ of $R^*_x$ in the quotient $S^4$ is a standard $\R^4$ complementary to a cellular set $C_k=B\natural kC$ and intersecting $\R P^2$ in $P$. Then $P$ is the interior of a compact disk $D^*$ in $\R P^2$, with $\partial D^*$ lying in the manifold part of $\partial C_k$. Since $\inter C_k$ is simply connected, we can cap off $D^*$ with a compact surface $F\subset C_k$ of some genus $g$. By cellularity, $R_G^*=S^4-C_k$ is diffeomorphic to $S^4-\{p\}$ with $p\in\inter C_k- F$, fixing a preassigned compact subset of the domain. Pulling back $D^*\cup F$ to $R_G^*$ gives a genus-$g$ surface in $\R^4$ agreeing with $P$ on a preassigned compact subset, showing $g^\infty(P)\le g$.
\end{proof}

The Taylor invariant gives yet another way to analyze the complexity of annuli in $\R^4$, by considering local maxima and superlevel sets $r^{-1}[a,\infty)$ of a radius function.

\begin{thm}\label{infLocMax}
Let $F\subset\R^4$ be a surface whose end is annular, with $\pi_1(\R^4-F)$ finitely generated, and with double branched cover $V$. If $\gamma^*(V)>2g(F)$ then the distance function to a generic point of $\R^4$ must have infinitely many local maxima on $F$.
\end{thm}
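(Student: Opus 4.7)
The plan is to prove the contrapositive: assuming $r|_F$ has only finitely many local maxima, I will show $\gamma^*(V) \le 2g(F)$, contradicting the hypothesis. By Proposition~\ref{gTaylor}, we already know $\gamma^*(V) \le g(F) + g^\infty(F)$, so the crux of the argument reduces to proving
\[
g^\infty(F) \;\le\; g(F)
\]
under the standing hypotheses.

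First I would set the stage. Let $a_0$ exceed the radii of all local maxima of $r|_F$, and let $a > a_0$ be a regular value with $F \cap S(a)$ a link $L_a$ in $S(a) \cong S^3$. The outer piece $F^a := F \cap \{r \ge a\}$ has only index-$0$ and index-$1$ critical points of $r$, so as $r$ increases from $a$ to $\infty$ it is built from $L_a$ by attaching only $0$- and $1$-handles (no $2$-handles). This is precisely the ``ribbon'' structure identified in the paper's discussion of simple exotic planes: $F^a$ is an infinite ribbon surface for $L_a$ extending toward infinity, with its noncompact end comprising a single (eventually) cylindrical tube.

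Next, to bound $g^\infty_{V_a}(F)$, I would construct a smooth replacement surface $\Sigma_a$ of genus $\le g(F)$ in a small neighborhood $V_a$ of $\infty$ in $S^4$ matching the boundary data $F \cap \partial V_a$. Choose a minimum-genus generating surface $G$ of the annulus $A$ determined by $F$'s end, with $g(G) = g(F)$. Since both $G$ and $F$ determine $A$, a pairwise ambient isotopy of $\R^4$ (which does not alter the double branched cover $V$ over a preassigned compact set) arranges $G$ to agree with $F$ on $\R^4 - B(a')$ for some $a' \le a$. The finite generation of $\pi_1(\R^4 - F)$ is used here to guarantee that this isotopy can be chosen compatibly with the ribbon structure of $F^a$ (so that no new generators appear beyond a bounded radius); this prevents the end from developing uncontrolled linking and lets us push all of $G$'s compact topology into $V_a$. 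Then $\Sigma_a := G \cap V_a$, smoothed at $\infty$ via the topologically standard character of $G$'s end relative to $A$, is a smooth surface of genus $\le g(G) = g(F)$ with the required boundary. Taking $a \to \infty$ gives $g^\infty(F) \le g(F)$.

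Finally, the contradiction. Combining with Proposition~\ref{gTaylor}, $\gamma^*(V) \le g(F) + g^\infty(F) \le 2g(F)$, contradicting the hypothesis. Equivalently, in the quotient picture for Proposition~\ref{gTaylor}, one builds a closed surface $\hat F \subset S^4$ by splicing $F|_{B(a')}$ (unchanged, and containing $\pi(Q)$ for any preassigned compact $Q \subset V$) onto $G$ outside $B(a')$ and the smooth cap at $\infty$, with total genus $\le 2g(F)$. The double branched cover $\hat V$ of $(S^4, \hat F)$ is then closed and spin by the argument of Corollary~\ref{slice}, contains $Q$, and has $b_\pm(\hat V) = g(\hat F) \le 2g(F)$ by Hsiang--Szczarba, confirming $\gamma^*(V) \le 2g(F)$.

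The main obstacle, and the step that most requires both hypotheses of the theorem, is the ``isotopy matching'' in the construction of $\Sigma_a$: arranging $G$ inside the neighborhood $V_a$ so that its boundary matches $L_a$ on $S(a)$ while keeping its total genus equal to $g(F)$. The no-local-maxima hypothesis forces $F^a$ to be a generalized ribbon (so its outer end is sufficiently ``straightenable''), and the finite generation of $\pi_1(\R^4 - F)$ bounds the depth of the ribbon structure at infinity in a way compatible with repositioning $G$; without either hypothesis, complicated end behavior of $F$ (as exemplified by the exotic planes of Corollary~\ref{ginf}) would force $g^\infty(F) > g(F)$ and block the construction.
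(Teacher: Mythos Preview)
Your approach has a genuine gap in the construction of $\Sigma_a$. By definition, the minimal-genus surface $G$ determines the \emph{same} annulus $A$ as $F$, so $G$ and $F$ already agree near infinity. Thus $G\cap V_a$ is just a piece of $A$, not a cap, and its one-point compactification at $\infty$ is the very singular disk whose smoothability $g^\infty$ is measuring. Your ``smoothing at $\infty$ via the topologically standard character of $G$'s end relative to $A$'' is circular: $G$'s end \emph{is} $F$'s end, and there is nothing about $G$ that makes that end any more smoothable. Note also that the two extra hypotheses are invoked in your write-up but do no actual work in the construction; if the argument were valid it would prove $g^\infty(F)\le g(F)$ for every $F$ with annular end, which is false --- the exotic planes of Theorem~\ref{ginfty} have $g(F)=0$ but $g^\infty(F)$ arbitrarily large.

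The paper's proof does not route through $g^\infty$ or Proposition~\ref{gTaylor} at all. Instead, the finiteness of local maxima is used directly on the handle structure of $V$: the level decomposition of $(\R^4,F)$ gives a handle decomposition of $\R^4-F$ in which each index-$k$ critical point of $r|_F$ contributes a $(k{+}1)$-handle, so finitely many local maxima means finitely many $3$-handles in the complement, and this persists after double covering and gluing in the branch locus (which adds a $2$-handle and $2g(F)$ further $3$-handles). Any compact subhandlebody $Q\subset V$ containing all the $3$-handles then has $b_2(Q)\le b_2(V)$ (since $V$ is built from $Q$ without $3$-handles), and its double $DQ$ is closed, spin, with $b_\pm(DQ)=b_2(Q)$, giving $\gamma^*(V)\le b_2(V)$. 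The finite-generation hypothesis on $\pi_1(\R^4-F)$ enters only at the final step, via a group-theoretic lemma of Hsiang--Szczarba applied to the prime-index subgroup $\pi_1(V)$, to force $b_1(V)=b_3(V)=0$; an Euler-characteristic count then yields $b_2(V)=2g(F)$.
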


\noindent It is unclear to the author whether the $\pi_1$-hypothesis is necessary. However, it is automatically satisfied in our case of interest, when the annulus at infinity is topologically standard (so compactifying gives a locally flat surface in $S^4$).

\begin{cor}\label{inf2h}
Any level diagram (using the radius function) of an exotic plane from Theorem~\ref{order}(b,c) or \ref{sum} requires infinitely many local maxima.
\end{cor}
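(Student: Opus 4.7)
The plan is to apply Theorem~\ref{infLocMax} to each exotic plane $P$ in the list. That theorem requires (i) an annular end, (ii) finitely generated $\pi_1(\R^4 - P)$, and (iii) the double branched cover $V$ satisfies $\gamma^*(V) > 2g(P)$. Hypotheses (i) and (ii) come for free: by definition an exotic plane is topologically ambiently isotopic to the standard $\R^2 \subset \R^4$, so the end of $P$ is homeomorphic to $[0,\infty)\times S^1$ and $\pi_1(\R^4-P) \cong \Z$. Since $P$ is a genus-$0$ surface, $g(P)=0$, so (iii) reduces to the single inequality $\gamma^*(V) > 0$.

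The real work is verifying $\gamma^*(V) > 0$ in each of the two cases. For the planes $P_n$ of Theorem~\ref{sum} with $n\ge 1$ (including $n=\infty$), this is immediate from Proposition~\ref{taylor}(c), which directly asserts $\gamma(R_n) > 0$ for the double branched cover. For the planes in Theorem~\ref{order}(b) and (c), the double branched cover has the form $R_s \natural \overline{R_t}$ or $R_s \natural \overline{R'_t}$, where $R_s$ is one of the large exotic $\R^4$-homeomorphs arising from $R^*_x$ in Section~\ref{Branch1}; Proposition~\ref{taylor}(a) then gives $\gamma^*(V) \geq \gamma^*(R_s)$, reducing the problem to showing $\gamma^*(R_s) > 0$.

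The main obstacle is precisely this last claim. The subtlety is that ``large'' for an exotic $\R^4$ only asserts the existence of a compact subset that cannot smoothly embed in $S^4$, whereas $\gamma^*(R_s) > 0$ requires a compact subset that fails to embed in any closed spin $4$-manifold with $b_+ = b_- = 0$, that is, a smooth homotopy $4$-sphere. To upgrade from one to the other, I would use the specific Casson-handle construction: a sufficiently large compact $Q \subset R^*_x$ carries the characteristic negative-definite, nondiagonalizable end structure inherited from its embedding in $\overline{\C P^2}$. If $Q$ were to embed in a spin homotopy $4$-sphere $Y$, then excising $Q$ from $\overline{\C P^2}$ and periodically gluing in copies of $Y \setminus \inter Q$ would produce a smooth $4$-manifold with a periodic end whose definite form is nondiagonalizable, contradicting Taubes' periodic-end version of Donaldson's theorem in exactly the style used for ``large'' in Section~\ref{R4}.

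With $\gamma^*(V) > 0$ established, Theorem~\ref{infLocMax} immediately gives infinitely many local maxima for the distance function from a generic basepoint to $P$; since any Morse level diagram uses such a generic function, the corollary follows.
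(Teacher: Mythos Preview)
Your overall strategy matches the paper's exactly: verify the hypotheses of Theorem~\ref{infLocMax}, which for an exotic plane reduces to $\gamma^*(V)>0$, then observe that each double branched cover contains an $R_x^*$-summand. For the planes of Theorem~\ref{sum} your citation of Proposition~\ref{taylor}(c) is correct, and for Theorem~\ref{order}(b,c) the reduction via monotonicity to $\gamma^*(R_s)>0$ is also correct.

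The one place you overshoot is the periodic-end argument for $\gamma^*(R_s)>0$. As written it does not quite make sense: you cannot ``periodically glue in copies of $Y\setminus\inter Q$'' to $\overline{\C P^2}\setminus Q$, since $Y\setminus\inter Q$ has only one copy of $\partial Q$ and there is no self-diffeomorphism of a collar to iterate. More importantly, no periodic-end machinery is needed. Section~\ref{Branch1} already establishes the stronger fact you want: $R_x^*$ has a compact subset that embeds in no closed \emph{positive definite} 4-manifold (because its end agrees with that of a positive definite, nondiagonalizable manifold built from $\overline X$, so a single cut-and-paste would contradict Donaldson's diagonalization theorem). Since a closed spin manifold with $b_+=b_-=0$ has $b_-=0$, this immediately gives $\gamma^*(R_x^*)>0$. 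This is exactly what the paper's one-line proof invokes: $V$ contains a compact $Q$ (in some $R_x^*$-summand) that embeds in no closed 4-manifold with $b_-=0$. So your concern that ``large'' alone might be too weak is legitimate in the abstract, but the specific construction already gives you the stronger non-embedding for free; you should simply cite it rather than attempting a separate (and malformed) Taubes argument.
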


\begin{proof}
The double branched cover $V$ of any of these has a compact submanifold $Q$ (in some $R_x^*$-summand) that does not embed in a closed 4-manifold with $b_-=0$, so $\gamma^*(V)>0$.
\end{proof}

\noindent Scholium~\ref{infEnd} below gives exotic planes satisfying the even stronger condition that the number of components of the regular superlevel sets must become arbitrarily large with increasing radius. In contrast, we show in Section~\ref{Diagrams} that the simple exotic planes from Theorem~\ref{order}(a) have diagrams with no local maxima, so their superlevel sets must be connected.

\begin{proof}[Proof of Theorem~\ref{infLocMax}]
The level diagram of $F$ determined by the distance function can be used to construct a handlebody whose interior is $V$. One approach is to first construct the complement of $F$. This is obtained from a 0-handle by adding a $(k+1)$-handle for each $k$-handle of $F$ (e.g.\ \cite[Section~6.2]{GS}). If $F$ has only finitely many local maxima, then its complement will have only finitely many 3-handles. Since $V$ is made by double covering and adding a 2-handle and $2g(F)$ 3-handles, it will also have only finitely many 3-handles. Suppose $Q$ is a compact subhandlebody in $V$ containing all of the 3-handles. Since $Q$ embeds in its double $DQ$, which is closed and spin with signature 0, it suffices to show $$\frac12 b_2(DQ)=b_2(Q)\le b_2(V)=2g(F).$$ 

The first two relations are essentially Taylor's proof \cite[Theorem~4.3]{Ta} that $\gamma(V)\le b_2(V)$ for any 4-manifold with finitely many 3-handles. We simplify the details by avoiding the delicate nonspin case. The inequality follows immediately since $V$ is built from $Q$ without 3-handles. The obvious retraction $DQ\to Q$ splits the long exact sequence of the pair to give
$$H_k(DQ)\cong H_k(Q)\oplus H_k(DQ,Q)$$
for each $k$. For $k=2$, the last term is isomorphic to $H_2(Q,\partial Q)\cong H^2(Q)$, giving the first equality.

The last equality was essentially proved by Hsiang and Szczarba \cite{HS} in the case of a topologically standard end (equivalently for closed manifolds). The general case follows the same method: We wish to show $b_1(V)=0=b_3(V)$, for then an easy Euler characteristic computation completes the proof. The second of these equalities follows from the first -- immediately from duality in the closed case of \cite{HS}, and with a bit more work in our setting: If $b_3(V)\ne0$, then there is some compact $Q'\subset V$ with connected boundary, carrying a class $\alpha\in H_3(Q')$ that maps nontrivially into $V$. This group injects into $H_3(DQ')$ by the above splitting. Thus, some $\beta\in H_1(DQ')$ intersects $\alpha$ nontrivially. Since $\partial Q'$ is connected, we can assume $\beta\in H_1(Q')$. Since $\beta\cdot\alpha\ne0$, it follows that $\beta$ has infinite order in $H_1(V)$, so $b_1(V)\ne0$. To show that $b_1(V)$ vanishes, let $Y$ be obtained from the exterior of $F$ in $\R^4$ by adding a 2-cell along a curve wrapping twice around the meridian. Then $\pi_1(Y)$ is finitely generated by hypothesis, and $H_1(Y)\cong\Z_2$. The double cover of $Y$ has $\pi_1(\tilde{Y})\cong\pi_1(V)$ an index-2 subgroup of $\pi_1(Y)$. Apply \cite[Lemma~4.1]{HS}: Since $\pi_1(Y)$ is finitely generated with finite cyclic $H_1$, and $\pi_1(V)$ has prime power index in $\pi_1(Y)$ with abelian quotient, we conclude that $H_1(V)$ is finite.
\end{proof}

We now exhibit exotic planes for which the number of components of the superlevel sets must become arbitrarily large:

\begin{sch}\label{infEnd}
There are uncountably many exotic planes $P\subset\R^4$ with $g^\infty(P)=\infty$, such that for each $m\in\Z^+$, there is a compact $K\subset\R^4$ for which every $P$-transverse integral homology ball $B\subset\R^4$ containing $K$ has complement intersecting $P$ in at least $m$ components. Any annulus (or surface with annular end) in $\R^4$ inherits these same properties after end sum with $P$.
\end{sch}

\begin{proof}
Let $P$ be any exotic plane whose double branched cover $V$ has infinite Taylor invariant, so $g^\infty(P)=\infty$ by Proposition~\ref{gTaylor}. For example, we can obtain uncountably many of these starting with some $P_\infty$ from Theorem~\ref{sum} (so $V=R_\infty$) and applying Theorem~\ref{2knot}(b) with $(X,F)=(\R^4,P_\infty)$, augmented by the first sentence of Remarks~\ref{2knotRem}. Given $m$, we can choose $K$ so that any $B$ containing $K$ has double branched cover $\tilde{B}\subset V$ that cannot embed in any closed, spin 4-manifold  with $b_+=b_-<m$. Since $P$ is a plane, the number of components of $P-B$ equals $b_1(P\cap B)+1$. To understand $\tilde B$, create a connected surface $(F,\partial F)\subset (B,\partial B)$ from $P\cap B$ by connecting its components near $\partial B$ using the minimal number of 1-handles. Then $b_1(F)=b_1(P\cap B)$.  Applying \cite{HS} as in the proof of Theorem~\ref{infLocMax}, we see that the double cover $Q$ of the integral ball $B$ branched along $F$ has $b_2(Q)=b_1(F)$ (the right side replacing $2g(F)$ in the previous argument) and contains a copy of $\tilde{B}$, so $m\le b_\pm(DQ)=b_2(Q)=b_1(F)=b_1(P\cap B)$. The first sentence of the theorem follows immediately. A similar argument (with $m$ shifted) applies to $A\natural P$ for any annulus $A\subset \R^4$, once we fill $A$ by a compact surface $F_0$ that we choose $K$ to contain. We also have $g^\infty(A\natural P)=\infty$ since a genus-$g$ surface capping $A\natural P$ near infinity gives a cap for $P$ of genus $g+g(F_0)$.
\end{proof}

\begin{Remark}
Other results from exotic $\R^4$ theory descend similarly to exotic planes. For example, if $R$ is an exotic $\R^4$ containing some $R_x^*$ as in Section~\ref{Branch1}, it has a compact subset that cannot be enclosed by a rational homology sphere. (It has the same end as a nondiagonalizable definite manifold. Capping the latter by the rational ball cut out in $R$ would contradict Donaldson's Theorem.) Thus, any exotic plane from Theorem~\ref{order}(b,c) or \ref{sum} has an associated compact subset of $\R^4$ that is not enclosed by any 3-manifold with corresponding double branched cover a rational homology sphere. This shows yet again that such planes cannot be standard near infinity, for otherwise there would be large 3-spheres covered by $S^3$.
\end{Remark}

We can now complete the discussion, begun preceding Theorem~\ref{sing}, of almost-smoothing topologically embedded surfaces. As we have seen (Corollary~\ref{cpctSurf}), a compact, locally flat $F\subset X$ is always topologically ambiently isotopic  to a surface that is smooth except at a unique point $p$, and \cite{MinGen} allows control of $g$ and $\kappa$ of the singularity. We wish to understand the possible local level diagrams centered at $p$ (up to almost-smooth isotopy as defined before Theorem~\ref{sing}).

\begin{proof}[Proof of Theorem~\ref{sing}]
First we dispense with the exceptional case of obtaining a singularity with $g=0$, where $F$ is initially smooth by hypothesis. Theorem~\ref{0g}(a) exhibits exotic planes in $\R^4$ determining exotic annuli with $g^\infty=0$. Explicit diagrams of these planes without local maxima are given by Theorem~\ref{drawPlanes}. One-point compactifying any of these gives a topologically unknotted, almost-smooth sphere in $S^4$ whose singularity has $g=0$ and lacks local minima but is not smoothable by almost-smooth isotopy. Connected summing $F$ with such a sphere gives the required almost-smooth surface.

For the remaining cases, a smooth $F$ can be topologically isotoped to have a unique singularity, with any nonzero (finite or infinite) $\kappa$ and $g=\max\{\kappa_\pm\}$, using the one-point compactifications of the exotic planes of Theorem~\ref{ginfty}. If $F$ is not smooth we almost-smooth it as in Corollary~\ref{cpctSurf}, with the proof of Theorem~\ref{ginfty} (Section~\ref{CH}) realizing any $\kappa$ and $g$ as before with $\kappa_\pm$ sufficiently large. Either way, the singularity comes from a core of some Casson handle $CH_F$. By construction, the numbers of first-stage double points of $CH_F$ with each sign cannot exceed the desired $\kappa_\pm$, but the higher stages of $CH_F$ can be chosen with arbitrarily large ramification. It now suffices to show that in such a sufficiently ramified Casson handle $CH_F$, every almost-smooth core obtained by Freedman's construction (Theorem~\ref{almostSmooth}) requires infinitely many local minima, and to arrange our desired intersection condition of (b) of the theorem. For the latter, after almost-smoothing $F$, delete its singular point $p$ from a 4-ball neighborhood of $p$, then invert to obtain an exotic annulus in $\R^4$. End-sum this with $P$ from Scholium~\ref{infEnd} and fill $p$ back in. The resulting surface is still topologically isotopic to $F$, but has $g=\infty$, and the number of components of its intersection with small homology balls increases without bound as required. (The complement of a homology ball in $S^4$ is again a homology ball.)

To show that every Freedman core disk $D$ of $CH_F$ requires infinitely many local minima, we (indirectly) compare with an exotic plane $P$ requiring infinitely many local maxima. We arrange the latter property as for Corollary~\ref{inf2h}, by choosing $P$ with double branched cover $R_x^*$ as in Section \ref{Branch1}. We wish to arrange $CH_F$ to be the same Casson handle $CH$ that appears (twice) in Figure~\ref{N}(b). We are not allowed to refine the first stage of $CH_F$, but the topological Hopf bundle $N_x$ embeds as required in the almost-definite $\overline{X}$ from the proof of Theorem~\ref{CHclass} as long as the first stage of $CH$ (in the second stage of $N_x$) has a negative double point and the higher stages are sufficiently ramified \cite{D5T}. As in Section~\ref{R4}, this shows the corresponding $R_x^*$ has $\gamma>0$ as required, so we can assume  $CH=CH_F$ after possibly reversing orientation and refining both. (If $CH$ has no positive first-stage double point, we lose the proof that the double branched cover $R^*$ of $R_x^*$ is exotic, but this is not presently needed.) The core disk $D\subset CH_F= CH$ is constructed as in the proof of Theorem~\ref{almostSmooth} by shrinking a suitable cellular subset $C$. As in the proof of Theorem~\ref{order} (Section~\ref{planeFamilies}), the quotient of $R^*_x$ is given by $R^*_G=S^4-(B\cup C)\approx  S^4-\{p\}\approx \R^4$. As in the proof of Corollary~\ref{ginf}, the branch locus $\R P^2$ intersects $R^*_G$ in $P$ and intersects $\partial (B\cup C)$ transversely in a knot $K$ (bounding $P$) in its 3-manifold region. (This can be constructed explicitly by taking the quotient of Figure~\ref{N}(b). The branch locus in $N_x$ intersects $2CH$ only in a 2-dimensional 1-handle, with the same core as the 4-dimensional 1-handle attached to $H$ that separates the two copies of $CH$. In the quotient $N_G$, this contributes a pair of arcs to $K$, running along the free half of the attaching region of $CH$.) The knot $K$ lies in the attaching region of the Casson handle whose closure is $B\cup C$, so after we extend the core $D$ across $B$, $K$ lies in an $S^1\times D^2$ tubular neighborhood of $\partial  D$ in $\partial  (B\cup C)$. Thus, the diffeomorphism $S^4- (B\cup C)\to S^4-\{p\}$, which created $D$, also sends the exotic plane $P\subset R^*_G$ to a surface that near $p$ is given by a satellite on $D-\{p\}$ with pattern $[0,\infty)\times K\subset[0,\infty)\times S^1\times D^2$. If $D$ had only finitely many local minima in its radial function, then some subdisk $D'\subset D$ containing $p$ would have none. Then $D'-\{p\}$ in $S^4-\{p\}=\R^4$ would have no local maxima, and hence $P$ would have only finitely many, contradicting our choice of the latter.
\end{proof}

\begin{sch}\label{ginftySch}
The exotic planes of Theorem~\ref{ginfty} with nonzero $g^\infty$ and $\kappa^\infty$ require infinitely many local maxima (assuming sufficient ramification in the construction).
\end{sch}

\begin{proof} This follows immediately from the previous proof, since the ends of these planes are constructed by puncturing Freedman core disks with arbitrarily large ramification above the first stage.
\end{proof}

\subsection{Group actions}\label{Group}
We now investigate symmetries of exotic planes. The main theme continues to be that we can extract information from exotic $\R^4$ theory. In this case, we consider the author's paper \cite{groupR4}, which constructed exotic $\R^4$-homeomorphs admitting uncountable group actions that inject into the mapping class group, and similar inextendable group actions at infinity. By expanding the theory to manifold pairs, we will obtain similar actions on exotic planes. For a pair $(X,F)$, let $\D(X,F)=\pi_0(\Diff_+(X,F))$ denote the group of pairwise isotopy classes of pairwise self-diffeomorphisms preserving both orientations. We write $\D(X)$ if $F$ is empty and $\D(F)$ if $X=\R^4$. Similarly, we consider pairwise group actions at infinity through ``germs'' of pairwise diffeomorphisms:

\begin{de}
A {\em diffeomorphism at infinity} of $(X,F)$ is a pairwise, proper embedding into $(X,F)$ of the closed complement of a compact subset of $F$, up to enlarging the latter. Two such diffeomorphisms are {\em isotopic} if they are pairwise, properly isotopic after a sufficiently large compact subset is removed from their domains.
\end{de}

\noindent Diffeomorphisms at infinity form a group under the obvious notion of composition. An {\em action at infinity} of a group $G$ is a homomorphism from $G$ into this group. Let $\D^\infty(X,F)$ denote the group of isotopy classes of diffeomorphisms at infinity. There is an obvious forgetful homomorphism $\D(X,F)\to\D^\infty(X,F)$. It can be shown as in \cite{groupR4} that its kernel and cokernel must be countable. While $\D(\R^4)$ is trivial, $\D^\infty(\R^4)$ is unknown. (It is the group of invertible elements in the monoid of homotopy 4-spheres under connected sum \cite{groupR4}, so countable and abelian, but its triviality is equivalent to the 4-dimensional smooth Schoenflies Conjecture.) Thus, we let $\D^\infty(F)$ denote the kernel of the homomorphism $\D^\infty(\R^4,F)\to\D^\infty(\R^4)$. We obtain a homomorphism $\D(F)\to\D^\infty(F)$.

\begin{thm}\label{group} (a) There is an exotic plane $P_\infty$ on which the uncountable group $\Q^\omega$ acts, as well as all countable subgroups of $\R$ and $S^1$, and the free group $G_\infty$ on countably infinitely many generators (where we use the discrete topology on each of these groups). These actions can be chosen to inject into $\D(P_\infty)$ and $\D^\infty(P_\infty)$.

\item{(b)} There is an exotic plane $P'_\infty$ such that $G_\infty$ and all countable subgroups of $\R$ and $S^1$ each act at infinity, injecting into the cokernel $\D^\infty(P'_\infty)/\im\D(P'_\infty)$.

Both $P_\infty$ and $P'_\infty$ can be chosen to have $g^\infty=0$ or $\infty$ (and be simple in the former case) and can be chosen from among uncountably many isotopy classes in each case.
\end{thm}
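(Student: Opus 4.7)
The plan is to transfer the uncountable group actions constructed for exotic $\R^4$-homeomorphs in \cite{groupR4} to exotic planes by means of the branched covering correspondence developed in Section~\ref{Branch1}. Recall from that section that the exotic planes $P$ constructed there arise as the branch loci of topologically standard involutions on exotic $\R^4$-homeomorphs $R$, and that end sum of pairs corresponds (via Section~\ref{Sums}) to end sum of the double branched covers. Thus, to obtain a pairwise $G$-action on an exotic plane commuting with the covering involution, it suffices to produce a $G$-action on the corresponding $R$ commuting with that involution, and then descend.

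For each group $G$ in the theorem (a countable subgroup of $\R$ or $S^1$, the free group $G_\infty$, or the uncountable $\Q^\omega$), the constructions of \cite{groupR4} realize a $G$-action on some exotic $\R^4$-homeomorph $R_G$ by end-summing countably many copies of a building block along a $G$-invariant configuration of rays in a central $\R^4$, using the freedom afforded by Proposition~\ref{infSum} to place rays densely (for instance along $[0,\infty)\times\Q\subset\R\times\R$, with $\Q$ acting by translation). The plan is to run this construction pairwise: take the building block to be one of the pairs $(\R^4,P)$ from the previous subsections, place the rays in the fixed set of the covering involution, and form the $G$-equivariant pairwise end sum $(\R^4,P_\infty)=\natural_{\gamma\in\Gamma}(\R^4,P)$. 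By Proposition~\ref{infSum} this is well defined, its double branched cover is $R_G$, and the $G$-action on $R_G$ from \cite{groupR4} descends to a homomorphism $G\to\D(P_\infty)\to\D^\infty(P_\infty)$ (resp.\ only to $\D^\infty(P'_\infty)$ in case (b), where the building block is arranged so the action exists at infinity but not globally).

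Injectivity is the key step. A pair-preserving (isotopy of) self-diffeomorphism of $(\R^4,P_\infty)$ lifts equivariantly to one of the double branched cover $R_G$, so any nontrivial $g\in G$ becoming trivial in $\D(P_\infty)$ would become trivial in $\D(R_G)$, contradicting \cite{groupR4}. The gauge-theoretic obstructions used there (end-periodic Donaldson and Furuta invariants, $\gamma^*$ from Proposition~\ref{taylor}, etc.) are invariants of the ambient smooth 4-manifold and are blind to the branch locus, so they transfer through the cover without modification. For part (b), one uses the variant of \cite{groupR4} producing an $R'_G$ whose $G$-action at infinity does not extend globally modulo isotopically trivial maps; the same obstruction yields an injection into $\D^\infty(P'_\infty)/\im\D(P'_\infty)$.

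The control of $g^\infty$ and the production of uncountably many isotopy classes are inherited from the building block. For the simple case ($g^\infty=0$), take $P$ from Theorem~\ref{order}(a); each compact subset of $P_\infty$ meets only finitely many summands nontrivially, so generation by unknots (and hence vanishing of $g^\infty$) is preserved. For $g^\infty=\infty$, take $P$ from Scholium~\ref{infEnd}, whose component-counting property for superlevel sets passes to infinite end sums. Uncountably many isotopy classes are obtained by varying $P$ within the uncountable families of Theorem~\ref{order}(a) or of Scholium~\ref{infEnd}; distinct choices produce nondiffeomorphic double branched covers (by the arguments of Section~\ref{Branch1} and \cite{groupR4}) and hence pairwise non-isotopic planes. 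The main obstacle is verifying that the \cite{groupR4} constructions can be carried out equivariantly with respect to the covering involution $r_y$ on $R$ — that is, that the invariant ray configuration can be arranged to lie in the fixed locus and that the $G$-action commutes with $r_y$ throughout the infinite end-sum construction. Once this equivariance and the gauge-theoretic transfer are confirmed, the remaining bookkeeping is routine.
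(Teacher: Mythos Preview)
Your overall strategy matches the paper's: build $P_\infty$ as an infinite pairwise end sum along a $G$-invariant ray configuration, so that the double branched cover is the exotic $\R^4$ of \cite{groupR4}, and then descend injectivity from that reference. However, the injectivity step as you state it has a gap. A pairwise isotopy from $g$ to $\id_{(\R^4,P_\infty)}$ lifts to the branched cover as an isotopy from $\tilde g$ to \emph{some} lift of the identity---but there are two such lifts, $\id_{R_\infty}$ and the covering involution $r$. So from triviality of $g$ in $\D(P_\infty)$ you only conclude that $\tilde g$ is isotopic to $\id$ \emph{or} to $r$, and the second case is not excluded by \cite{groupR4} directly. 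The paper handles this by observing that $r$ preserves the summand structure and commutes with the involution $r_x$ used in the gauge-theoretic argument of \cite{groupR4}, so that argument applies equally well to $\tilde g\circ r$; you need to make this point.

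For part (b) your sketch is too vague to be a proof. The actual mechanism is not simply ``a variant of \cite{groupR4}'': one constructs $P'_\infty$ by reversing the orientation on a single $P_-$ summand, so that $P'_\infty$ agrees with $P_\infty$ outside a compact set (hence $\D^\infty(P'_\infty)=\D^\infty(P_\infty)$), but the lifted actions are now conjugate by an $r_x$-twist on one summand. It is \cite[Theorem~4.6]{groupR4} that shows nontrivial elements of this twisted action cannot extend over $R_\infty$, and hence over $P'_\infty$. Without identifying this twist you have not explained why the action at infinity fails to extend. Finally, for $g^\infty=\infty$ the paper uses the building block from Theorem~\ref{order}(c) (with $R^*_x$ as in Theorem~\ref{sum}) together with Proposition~\ref{gTaylor}, rather than Scholium~\ref{infEnd}; your alternative may work but would need its own verification that the \cite{groupR4} argument still applies to that cover.
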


\noindent Note that $\R$ and $S^1$ have many countable subgroups. For example, $(\Q/\Z)\oplus_\infty\Q$ embeds in $S^1=\R/\Z$, rationally generated by $\Q/\Z$ and the square roots of all primes. The group $\Q^\omega$ cannot appear in (b) since the given cokernel is countable. In the case $g^\infty=0$, the actions in the theorem are described explicitly by level diagrams in $\R^4$ in Section~\ref{drawGroup}.

\begin{proof}
We first construct the exotic plane $P_\infty$ and its actions. Let $P$ be any of the exotic planes arising in the proof of Theorem~\ref{order}(a) or (c) in Section~\ref{planeFamilies}. By construction, $P=P_-\natural P_+$, where $P_-$ is double branch covered by a small exotic $(R,r_y)$ from Section~\ref{Branch1}, and $P_+$ is the standard plane in (a) of that theorem, but double covered in (c) by a large $R_x^*$, which we assume is constructed as for Theorem~\ref{sum}. Let $P_\infty=\natural_\infty P$. Then $g^\infty(P_\infty)$ is 0 or $\infty$ in the two respective cases, by subadditivity of $g^\infty$ and Proposition~\ref{gTaylor}, respectively. (In the former case, the infinite end sum should still be considered simple; see Section~\ref{Questions}.) In either case, $P_\infty$ can be chosen from uncountably many isotopy classes obtained by varying $R$ and applying \cite[Lemma~7.3]{MinGen} as in the proof of Theorem~\ref{2knot}(b) (where $\tilde{X}$ consists of all summands but one copy of $R$). For fixed $P$, Proposition~\ref{infSum} allows  us to construct this infinite end sum using any choice of countably infinitely many disjoint rays in $\R^2\subset\R^4$, and the resulting plane is independent of the choice. To realize an action by a countable subgroup $G$ of $\R$, take the rays to be $[0,\infty)\times G\subset\R\times\R=\R^2$. Then translation by any element of $G$ determines a self-diffeomorphism of $(\R^4,P_\infty)$.  (Truncating the rays as in the proof of Proposition~\ref{infSum} does not change the resulting exotic plane. For a careful check that the action extends, see the proof of \cite[Theorem~4.4]{groupR4}.) For subgroups of $S^1$, do the same construction in polar coordinates. (Finite subgroups can be embedded in infinite subgroups, or used for actions on finite end sums.) For $\Q^\omega$ (or any countable direct product of countable subgroups of $\R$), realize each factor using a separate copy of $(\R^4,\R^2)$ so that $(-\infty,-1]\times\R^3$ is held fixed, then end sum these together using another copy of $(\R^4,\R^2)$ (fixed by the action). For $G_\infty$, let $F$ be a plane minus an infinite discrete set, consider a single ray in $F\times\{0\}\subset F\times\R^2$, and lift to the universal cover $\R^2\subset \R^4$.

The proof of (a) is completed by comparing with the proof of \cite[Theorem~4.4]{groupR4}, the corresponding theorem for $\R^4$-homeomorphs. By construction, the double branched cover of $P$ can be identified with the corresponding exotic $\R^4$ in that proof. (The latter is denoted $R_S$ or $R_S\natural R_L$ therein, depending on whether we are in the case $g^\infty=0$ or $\infty$. We should use $R$ from Section~\ref{Branch1} in place of $R_S$ that appears in that proof and Remark \ref{reflect}(d) below, which causes no difficulties since we presently have no need of Stein structures.) The double branched cover $R_\infty$ of $P_\infty$ is then an infinite end sum of these, and by construction, the actions of the previous paragraph lift to actions on $R_\infty$ that were shown in that proof to inject into $\D(R_\infty)$ and $\D^\infty(R_\infty)$. If any group element $\gamma$ is isotopic to the identity in $(\R^4,P_\infty)$ then its lift $\tilde\gamma$ to $R_\infty$ is isotopic to either the identity or the covering involution $r$. In the first case, $\gamma$ must be the identity by \cite{groupR4}. Otherwise, the same proof applies to $\tilde\gamma\circ r$ since $r$ preserves the summands and commutes with the relevant involution $r_x$ at the end of each $R$-summand. More strongly, any $\gamma\ne\id$ cannot even be isotopic to the identity after removing a compact subset, completing (a).

For (b), recall that the summand $P_-$ of $P$ is double branch covered by $(R,r_y)$ (Figure~\ref{N}(a)). We have seen that the other involution $r_x$ on the end of $R$ cannot be diffeomorphically extended over all of $R$. Thus, it descends to an involution of the end of $P_-$ that cannot extend to a self-diffeomorphism of $(\R^4,P_-)$. (For an explicit description of this in $\R^4$, see Scholium~\ref{endSym}.) The involution is standard on the end of $\R^4$ (ignoring $P_-$) since it extends standardly over $N_y$. It preserves the orientation of $\R^4$ but reverses it on $P_-$. Construct $P'_\infty$ as in the previous paragraphs, except with the orientation on one copy of $P_-$ reversed. Then $P'_\infty$ is the same as $P_\infty$ outside a compact set. In particular, the previous group actions still inject into $\D^\infty(P'_\infty)=\D^\infty(P_\infty)$. The double branched covers of $P'_\infty$ and $P_\infty$ can be diffeomorphically identified, but their group actions at infinity are then conjugate by an $r_x$-twist on one summand. According to \cite[Theorem~4.6]{groupR4}, nontrivial elements of the new group actions can no longer extend over $R_\infty$. The same then applies to $P'_\infty$. The last sentence of the theorem follows for $P'_\infty$ just as for $P_\infty$ since their double branched covers agree.
\end{proof}


\section{Level diagrams}\label{Diagrams} 

We now explicitly describe some exotically embedded surfaces by drawing their successive levels with respect to the radius function on $\R^4$, which we can take to be a Morse function on the surfaces. The key step will be to understand the first stage core $c_1$ of a Casson handle, whose interior we have already seen to be diffeomorphic to $\R^4$ (Proposition~\ref{engulf}). Once we can draw such immersed planes $\inter c_1$ in $\R^4$ explicitly, their ends will typically be explicit exotic annuli as in Theorem~\ref{0g}(b). We can then use the satellite construction to obtain explicit diagrams of the simple exotic planes from Section~\ref{Branch} that prove Theorem~\ref{0g}(a), the simplest being Figure~\ref{plane} in the introduction. We focus on using the simplest Casson handle $CH_+$, whose kinky handles each have a single positive double point, then indicate how to generate uncountably many isotopy classes by using more general Casson handles. We exhibit the inextendible involutions of the ends of exotic planes used in the proof of Theorem~\ref{group}(b), and indicate how to draw group actions as in that theorem.

\subsection{Annuli from Casson handle cores}\label{DrawCH}
First recall that the proof of Proposition~\ref{engulf} provides inclusions $T_{n-1}\subset B_n\subset\inter T_n$, for each $n\ge2$, where $B_n$ is the 4-ball mapped onto the ball of radius $n$ by the diffeomorphism $\inter CH_+\approx\R^4$, and $T_n\subset\inter CH_+$ is a suitably thinned version of the initial $n$-stage subtower of $CH_+$. The same proof shows that each $T_n$ in $CH_+$ is diffeomorphic to its top stage kinky handle, which can be identified with $S^1\times D^3$. This is pictured (as in Figure~\ref{kink}) by the large solid torus in Figure~\ref{Tn}(a), extended by product with the interval $I=[0,1]$. Before thinning, $T_{n-1}$ can then be identified with a neighborhood of the attaching circle of the kinky handle, so it is represented by the thick Whitehead curve extended over the subinterval $[\frac12,1]\subset I$. It follows by induction that the attaching circle $\partial c_1$ of $T_n$ appears at $t=1$ as the $n$-fold Whitehead double of the core circle of the solid torus. (By symmetry of the Whitehead link, this matches the description in Section~\ref{Casson}.) After  the thinning operation, $T_{n-1}$ appears as the same solid torus given by the thick curve, but only extended over $[\frac35,\frac45]$. The core of this solid torus extends over $[\frac45,1]$ as an annulus $A_n$. We see inductively that $\inter c_1\subset\bigcup T_n=\inter CH_+$ is obtained from the core immersed disk of the thinned $T_1$ by its union with each of the annuli $D^{n-1}A_n\subset T_n-\inter T_{n-1}$, $n\ge2$. (Thus, it intersects the boundary of each of the thinned copies of $T_n$ in its attaching circle, as required.)

\begin{figure}
\labellist
\small\hair 2pt
\pinlabel {(a)} at 0 5
\pinlabel {(b)} at 150 5
\pinlabel {$-2$} at 197 10
\endlabellist
\centering
\includegraphics{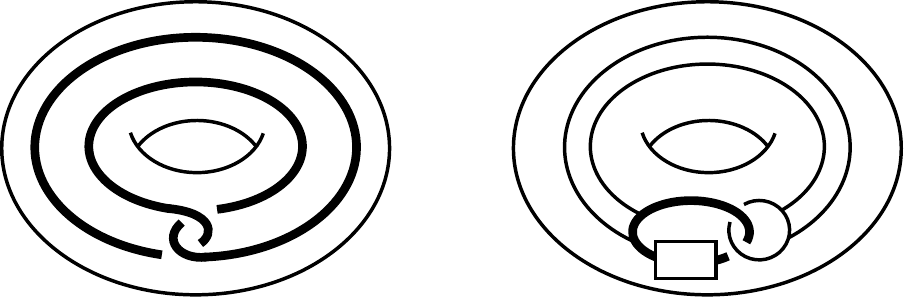}
\caption{Exhibiting $T_{n-1}\subset B_n\subset\inter T_n$ for $n\ge2$. The tower $T_{n-1}$ is given by the thick curve, whose $(n-1)^{st}$ double is the attaching circle of $T_{n-1}$ and (in (a) before thinning) also of the large solid torus $T_n$ and $CH_+$. The 4-ball $B_n$ surrounds $T_{n-1}$ and the 0-handle in (b).}
\label{Tn}
\end{figure}

To see $\inter c_1$ as a level diagram in $\R^4$, we must intersperse this description with the balls $B_n$ and interpret these as round balls in $\R^4$. First we isotope $T_{n-1}$ vertically in $T_n$, fixing the concave-left strand of the clasp but moving the concave-right strand down to the interval $[\frac15,\frac25]$. This vertically stretches the annulus $A_n$ connecting $\partial T_{n-1}$ to $\partial T_n$, but the projection to Figure~\ref{Tn}(a) is unchanged. Next we move $T_{n-1}$ along $T_n$ preserving the coordinate $t$ in $I$, again fixing the concave-left strand, so that $T_{n-1}$ projects into a small ball as in (b). (This is an isotopy since the two strands of the clasp lie in disjoint intervals of $I$.) This move preserves the blackboard framing, but lowers the writhe by 2, so we must put two left twists into the embedding of $T_{n-1}$ in (b) to recover the 0-framing in (a).  The isotopy drags along the part of $A_n$ with $t<\frac12$, but leaves the part with $t>\frac12$ fixed, so that $A_n$ is stretched across a horizontal disk at $t=\frac12$. This is shown in (b), with the disk interpreted as a canceling 0-1 handle pair. Finally, we raise all of $T_{n-1}$ back to $[\frac35,\frac45]$. This pushes the 1-handle of $A_n$ to some level above $\frac45$, but the 0-handle stays at $t=\frac12$ since it is blocked above by $T_{n-1}$. In this final configuration, viewed as levels with increasing $t$, we first see a 0-handle of $A_n$ appear at $t=\frac12$. Its boundary persists until $T_{n-1}$ appears as a meridian solid torus for $\frac35\le t\le\frac45$. At $t=\frac45$, that meridian becomes one boundary component of $A_n$, and then persists until the 1-handle connects it to the boundary of the 0-handle. After this, the remaining boundary is a Whitehead curve that persists until at $t=1$ it becomes the other boundary component of $A_n$, the attaching circle of the top kinky handle of $T_n$. We take $B_n$ to be a small 4-ball in $T_n$ containing $T_{n-1}$ and the 0-handle of $A_n$, but not the 1-handle. 

Combining the descriptions of $\inter c_1$ and $A_n$ from the last two paragraphs, we can exhibit $\inter c_1\subset\R^4$ recursively as Figure~\ref{c1}. At $r=1$ we see a Hopf link, which we interpret as the boundary of a pair of intersecting 0-handles in the unit 4-ball, realizing the double point of $c_1$. (The twist box on the single strand of $\alpha_0$ is a reminder that the $-2$-framing on this component becomes the canonical 0-framing of Definition~\ref{0framing} at subsequent stages.) As the radius function on $\R^4$ increases to 2, a 1-handle connects these disks while threading through another pair of 0-handle boundaries. The resulting twisted Whitehead curve has now been exhibited bounding a disk with one double point, the core of $T_1$, where $T_1$ is embedded with two left twists, along with a pair of 0-handles for $DA_2$ as in Figure~\ref{Tn}(b) ($n=2$). The next iteration produces the 1-handles completing $DA_2$ as in that figure, along with the four 0-handles of $D^2A_3$ in $T_3$. Iterating Figure~\ref{c1} for all $n\in\Z^{\ge0}$ exhibits $\inter c_1$ in $\R^4$.

\begin{figure}
\labellist
\small\hair 2pt
\pinlabel {$\alpha_{n+1}$} at 255 49
\pinlabel {$\alpha_n$} at 105 49
\pinlabel {$\alpha_n$} at 10 49
\pinlabel {$\alpha_0=$} at 37 117
\pinlabel {$\alpha_n=$} at 165 117
\pinlabel {$\alpha_{n-1}$} at 208 132
\pinlabel {$-2$} at 84 120
\pinlabel {$-2$} at 218 103
\pinlabel {$-2$} at 152 8
\pinlabel {$2^{n}$ strands} at 65 22
\pinlabel {$2^{n-1}$ strands} at 280 117
\pinlabel {$2^{n}$ strands} at 215 22
\pinlabel {$2^{n+1}$ strands} at 312 22
\pinlabel {$\approx$} at 230 40
\pinlabel {$r=n+1$} at 20 -5
\pinlabel {$r=n+2$} at 230 -5
\endlabellist
\centering
\includegraphics{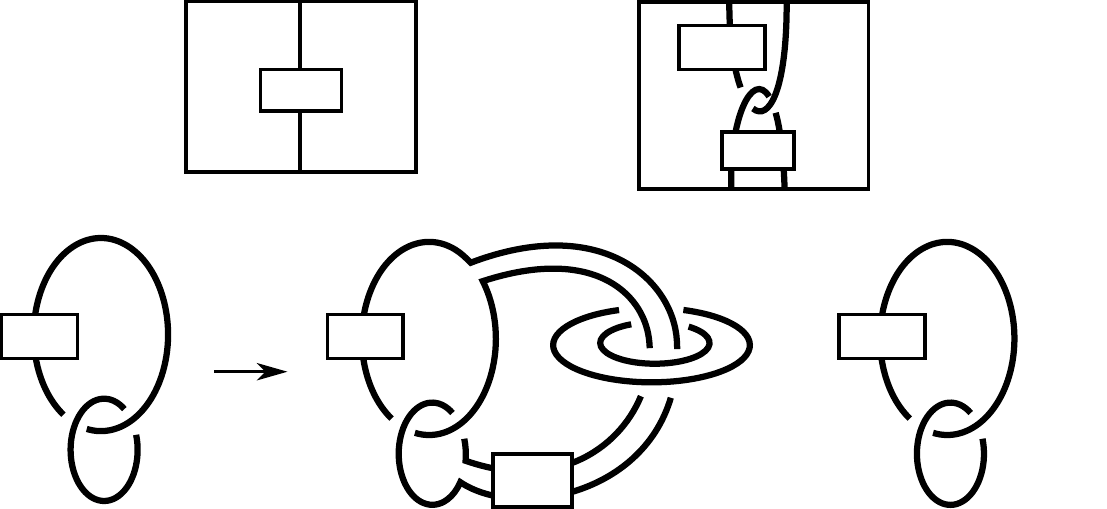}
\caption{The first stage core $\inter c_1$ of $\inter CH_+\approx\R^4$ as an infinite level diagram. For $n\ge1$, the tower $T_n$ is embedded (with $-2$ twists) in $B_{n+1}$ as the ball of radius $n+\frac12$ with a 1-handle attached, extended to level $r=n+1$ as the solid torus in the lower left diagram containing the upper circle and $\alpha_n$ box but disjoint from the lower circle.}
\label{c1}
\end{figure}
For an arbitrary Casson handle, the first stage core $\inter c_1$ can be drawn similarly. The main complication is that we must use boundary sums of solid tori at each stage as in Figure~\ref{kink}. For example, the corresponding diagram for $CH_{m,n}$ from Section~\ref{CH} is made by drawing $m+n$ copies of Figure~\ref{c1} in separate regions, $n$ of these with reversed ambient orientation, and connected summing them at each copy of $\alpha_0$. (Note that $\alpha_0$ lies in a left half-space whose intersection with the diagram is independent of $r\ge1$.) The beginning of a more typical $\inter c_1$ is shown in Figure~\ref{ram}. The core of the thinned first stage $T_1$ is shown by the top three disks connected with two bands. (This exhibits a disk with two positive double points whose regular neighborhood is $T_1$.) Each dashed arc represents a pair of parallel ribbons connecting the boundary of a band to a pair of 0-handle boundaries. Then $T_2$ is a ball containing these 0-handles and $T_1$, with three 1-handles added along the dashed arcs. Each 1-handle of the surface has $-2\sigma$ twists, where $\sigma$ is the sum of the signs of the double points at the next stage. For example, the upper left band is untwisted since the set of disks it threads through corresponds to a pair of double points of opposite sign (as seen in the second level of edges of the corresponding graph).

\begin{figure}
\labellist
\small\hair 2pt
\pinlabel {$-4$} at 206 142
\pinlabel {$-2$} at 305 95
\pinlabel {$+4$} at 86 62
\pinlabel {$-2$} at 220 49
\pinlabel {$+2$} at 313 52
\pinlabel {$+$} at 18 99
\pinlabel {$+$} at 40 99
\pinlabel {$+$} at 8 61
\pinlabel {$-$} at 33 61
\pinlabel {$+$} at 51 61
\pinlabel {$-$} at -1 20
\pinlabel {$-$} at 21 20
\pinlabel {$+$} at 43 20
\pinlabel {$-$} at 64 20
\endlabellist
\centering
\includegraphics{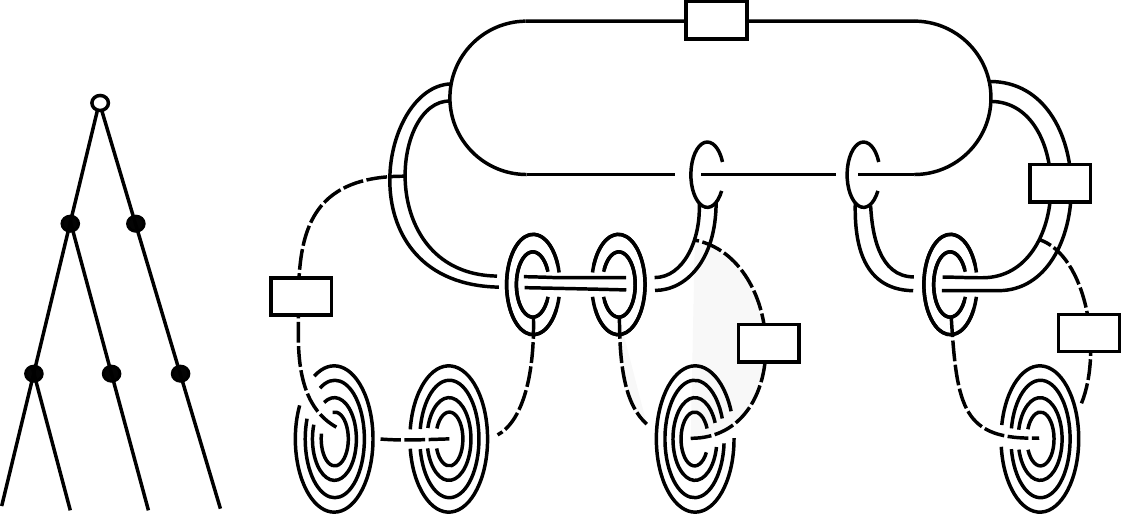}
\caption{Drawing the first stage core $\inter c_1$ of the Casson handle with the given signed graph.}
\label{ram}
\end{figure}

To draw an exotic annulus as in Corollary~\ref{annuli}, it now suffices to delete an open disk from $\inter c_1$ containing at least one sheet of each double point. (Note that $\partial c_1\subset\partial CH$ also bounds any almost-smooth topological core, so the corresponding annuli are smoothly isotopic.) We could delete an arbitrarily large disk, so that the diagram starts at some large radius $r$. However, it is easiest to see the pattern if we delete a small disk, namely the disk with boundary at $r=1$ made from $\alpha_0$ in Figure~\ref{c1} or the disk bounded by the top circle in Figure~\ref{ram}. Corollary~\ref{annuli} and its proof now imply:

\begin{prop}\label{drawAnn}
Figure~\ref{c1} represents an exotic annulus, with boundary given by the circle containing $\alpha_0$ at $r=1$, and with $g=\kappa_+=1$ and $\kappa_-=0$. We realize all nonzero values of $\kappa$, with $g=\max\{\kappa_+,\kappa_-\}$, by connected sums of copies of this diagram (suitably oriented), and uncountably many realizing each value as in Figure~\ref{ram}. \qed
\end{prop}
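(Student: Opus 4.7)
The plan is to assemble the preceding diagrammatic analysis of $\inter c_1$ with the classification results of Section~\ref{CH}. First, I would observe that the construction in the preceding subsection explicitly identifies Figure~\ref{c1} with the first stage core $\inter c_1$ of $CH_+$: at each stage the engulfing inclusion $T_{n-1}\subset B_n\subset\inter T_n$ from Proposition~\ref{engulf} is realized inside the round ball of radius $n$, with the $-2\sigma$ twists on the bands exactly compensating for the writhe change produced by sliding $T_{n-1}$ into the ball $B_n$. Deleting the open disk cobounded by $\partial c_1$ and the circle containing $\alpha_0$ at $r=1$ leaves an annulus. Since $\partial c_1$ coincides with the attaching circle of $CH_+$, it also bounds any almost-smooth topological core of $CH_+$, and both cores are smoothly isotopic rel boundary in $\inter CH_+\approx\R^4$ away from a small boundary collar. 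Thus, the depicted annulus is smoothly isotopic to the annulus produced by Corollary~\ref{annuli} from $CH_+=CH_{1,0}$, whose invariants are $\kappa=(1,0)$ and $g=1$ by Theorem~\ref{CHclass}.

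For arbitrary nonzero $(m,n)\in\Z^{\ge0}\times\Z^{\ge0}$, recall that $CH_{m,n}$ is the boundary sum of $m$ copies of $CH_+$ and $n$ copies of its mirror, glued together along their attaching regions. On the diagrammatic side, each copy of $\alpha_0$ sits in a half-space whose intersection with every round $3$-sphere of radius $r\ge1$ is the same; consequently, placing $m+n$ disjoint copies of Figure~\ref{c1} into disjoint half-spaces (with $n$ of them reflected to reverse ambient orientation) and connect-summing them at the $\alpha_0$ regions produces a diagram of $CH_{m,n}$'s first stage core punctured once. The resulting annulus is the annulus associated by Corollary~\ref{annuli} to $CH_{m,n}$, whose invariants are again supplied by Theorem~\ref{CHclass}, giving $\kappa=(m,n)$ and $g=\max\{m,n\}$.

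For the final clause, I would invoke Theorem~\ref{CHclass} directly: uncountably many diffeomorphism types of Casson handles can be obtained by refining $CH_{m,n}$ above its first stage, while preserving $(\kappa,g)$. Each such refinement is encoded by a signed tree, and Figure~\ref{ram} illustrates how to draw the associated first stage (boundary-summed solid tori, one for each first-stage double point, with a $-2\sigma$-twisted band to each bunch of $0$-handles, where $\sigma$ is the signed count of next-stage double points above that band); iterating this doubling convention up the tree produces a fully explicit recursive level diagram. By Proposition~\ref{rebuild}(a), distinct diffeomorphism types of Casson handles yield pairwise nonisotopic annuli, producing the claimed uncountable family. The main obstacle I anticipate is bookkeeping rather than substance: one must confirm that the twist counts and the recursive nesting in Figure~\ref{ram} faithfully reproduce the signed tree and match the identification $\inter CH\approx\R^4$ from Proposition~\ref{engulf}, but both of these verifications were carried out in the preceding paragraphs of this subsection, so the proof amounts to citing the previous construction together with Corollary~\ref{annuli} and Theorem~\ref{CHclass}.
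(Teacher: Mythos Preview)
Your proposal is correct and follows the same route as the paper: the proposition is stated with a \qed because it is meant to be immediate from Corollary~\ref{annuli} (hence Theorem~\ref{CHclass}) together with the diagrammatic construction of $\inter c_1$ just preceding it, and you have faithfully unpacked that citation. One small phrasing issue: saying ``both cores are smoothly isotopic rel boundary'' is not quite right, since one core is immersed and the other is almost-smooth with a singularity; what is true (and what the paper says) is that the \emph{annuli} they determine are smoothly isotopic, because both surfaces share a smooth collar of $\partial c_1$ near $\partial CH$, so their ends in $\inter CH\approx\R^4$ agree.
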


While we have explicitly described annuli realizing all  nonzero $g$ and $\kappa$ as sums of $g$ or $\kappa_+ +\kappa_-$ suitably oriented copies of Figure~\ref{c1}, the ramification required for uncountable families is harder to describe. In principle, it can be described explicitly by applying Bi\v zaca's algorithm \cite{B} to Freedman's uncountable nesting of Casson handles. However, for the first Casson handle distinguished from a given $CH_{m,n}$ by this method, the number of disks at the $k^{th}$ stage increases highly superexponentially with $k$, and subsequent Casson handles grow successively faster. On the other hand, it seems likely that Casson handles are classified up to diffeomorphism by their signed trees, in which case the corresponding annuli are also.

\subsection{Exotic planes}\label{DrawPlanes}
We now wish to draw a simple exotic plane $P$ with double branched cover $R$ a small exotic $\R^4$ as in Section~\ref{Branch1}. Recall that $R$ is obtained from Figure~\ref{N}(a) of $N$, the exotic punctured $S^2\times S^2$, by a surgery that changes one large curve from 0-framed to dotted. (We remove boundary as needed to get open manifolds.) The involution $r_y$ is rotation about a horizontal line in the paper. Since we do not presently need the other involutions of $N$, we can replace $2CH$ by any sufficiently complicated Casson handle $CH'$. In fact, $R$ is exotic if we use $CH_+$ or any refinement of it \cite{BG}. These refinements range $R$ over uncountably many diffeomorphism types (Section~4.1), realizing uncountably many ends (since only countably many manifolds can have a given end), yielding uncountably many exotic planes $P$ and annuli.

The branch locus in $N$ is the fixed set of $r_y$, given in Figure~\ref{N}(a) as an unknotted disk in the 0-handle, bounded by the $y$-axis, together with a 2-dimensional 1-handle $D^1\times D^1$ inside each 2-handle $D^2\times D^2$. In the quotient, these 2-handles become 4-balls attached to the 0-handle along 3-balls, so they do not contribute to the topology. However, the 1-handles inside them appear as in Figure~\ref{Ny}(a) (which is essentially \cite[Figure~8]{menag}, reflected as discussed at the beginning of Section~\ref{Branch} above). The dashed arcs in the figure are ribbon moves exhibiting a level diagram of the obvious punctured-torus Seifert surface pushed into the interior of the 4-manifold. (We have essentially inverted the Akbulut--Kirby algorithm \cite{AK} for drawing branched covers of pushed-in Seifert surfaces.) Surgering $N$ to $R$ replaces an $S^2\times D^2$ by $D^3\times S^1$, where $r_y$ reflects both factors of each. This does not change the quotient manifold, but surgers the branch locus along an unknotted disk. The resulting exotic plane $P$ is obtained in the figure by deleting one of the dashed arcs, breaking the previously visible $r_z$-symmetry when $CH'=2CH$. Removing the Casson handle exhibits $P$ as the interior of a ribbon disk $D\subset I\times S^1\times D^2$. This is also shown after isotopy in (b) and (c). (The isotopy to (b) is $r_z$-equivariant if the second ribbon is shown symmetrically in (b), with the Casson handle $2CH$ attaching to a meridian at the upper fixed point of the dotted circle.) Replacing the Casson handle by a 2-handle in any of these diagrams cancels the 1-handle, exhibiting $D$ as an unknotted disk in the 4-ball. Thus, in the topological category, $P$ is an unknotted plane in $\R^4$ as expected (and $N$ is the branched cover of a topologically standard punctured torus). In the smooth category, the pictured $I\times S^1\times D^2$ is essentially a boundary collar of $CH'$, so we get diffeomorphisms $R_y\approx\inter CH'\approx\R^4$, showing directly that $P$ is smoothly a plane in $\R^4$ (necessarily exotic since it is branch covered by $R$). We can identify $[0,1)\times S^1\times\{0\}\subset I\times S^1\times D^2$ as the annulus $A=[0,\infty)\times S^1\subset \inter c_1$ of Proposition~\ref{drawAnn}. (Think of $CH'$ as attached at level $0\in I$.) Thus, in $\inter CH'\approx\R^4$, $P$ is made from $A$ by a satellite construction whose pattern begins with $D$ near $\partial A$ and  extends as $[0,\infty)\times\partial D$ along the rest of $A$. This can be drawn by quadrupling all strands in the diagram of $A$ and inserting the generalized clasp of $D$ shown in (c), using the canonical 0-framing of $A$. We conclude:

\begin{figure}
\labellist
\small\hair 2pt
\pinlabel {Branch locus} at 69 73
\pinlabel {0} at 48 199
\pinlabel {$CH'$} at 91 199
\pinlabel {$D$} at 255 17
\pinlabel {$D$} at 285 100
\pinlabel {(a)} at 5 200
\pinlabel {(b)} at 208 200
\pinlabel {(c)} at 150 5
\endlabellist
\centering
\includegraphics{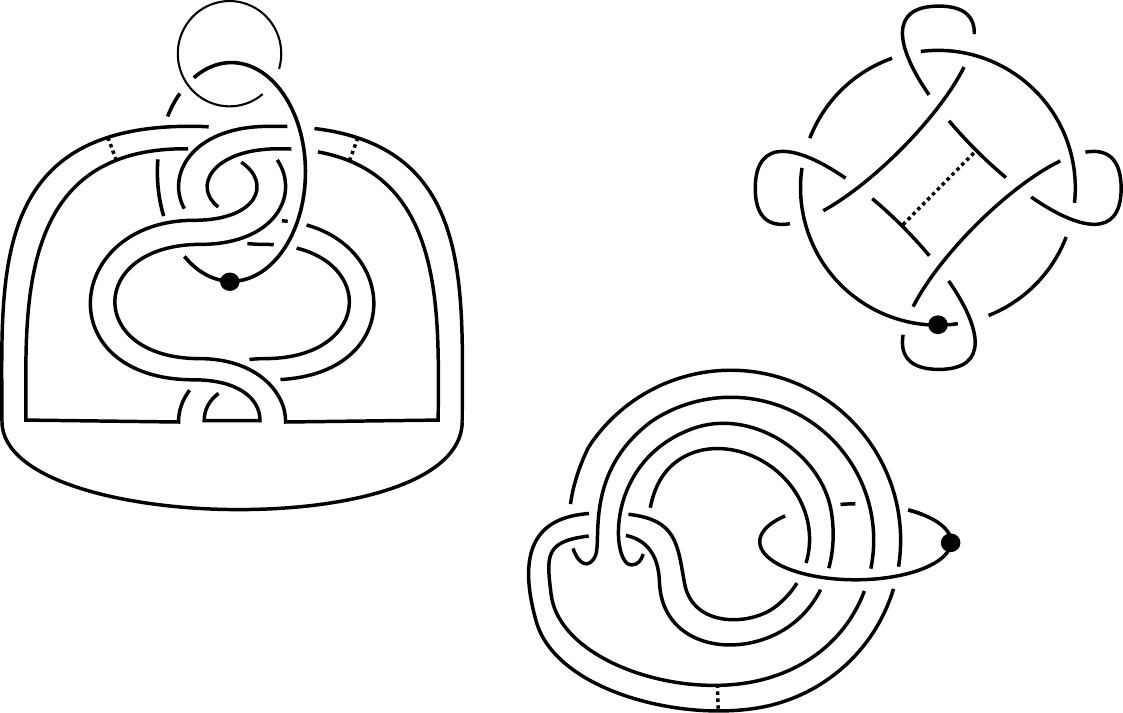}
\caption{(a) Exotic punctured torus (with two ribbon moves) and plane $P$ (with one ribbon move) respectively branch covered by $N$ and $R$, and (b,c) the corresponding pattern $P=\inter D$.}
\label{Ny}
\end{figure}

\begin{thm}\label{drawPlanes}
Figure~\ref{plane} shows a simple exotic plane double covered by an exotic $R$ made as in Section~\ref{Branch1} with $CH_+$ replacing each $2CH$. The corresponding diagram with an arbitrary Casson handle $CH'$ in place of $CH_+$ is made from the diagram for $CH'$ as in Figure~\ref{ram} by inserting $\alpha_0$ from Figure~\ref{plane} (with the number of twists in its box chosen suitably) and quadrupling the other strands. In particular, this includes an uncountable family of distinct simple exotic planes whose ends determine distinct annuli.  \qed
\end{thm}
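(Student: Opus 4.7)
The plan is to combine two explicit constructions already developed in the section. Proposition~\ref{drawAnn} exhibits Figure~\ref{c1} as a level diagram of the annulus $A \subset \inter c_1 \subset \inter CH_+ \approx \R^4$, and the discussion preceding the theorem presents the exotic plane $P$ as a satellite on $A$ whose pattern is the ribbon disk $D$ of Figure~\ref{Ny}(c), with $D$ located near $\partial A$ and extended as $[0,\infty) \times \partial D$ along the remainder of $A$. Once these two ingredients are lined up, Figure~\ref{plane} can essentially be read off by inserting $D$ into Figure~\ref{c1}.

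Concretely, I would first identify the boundary collar $I \times S^1 \times D^2$ of Figure~\ref{Ny}(b,c) with a collar of $\partial CH_+$ inside $\inter CH_+$, matching $[0,1) \times S^1 \times \{0\}$ with the $r \ge 1$ portion of $A$ via the canonical 0-framing of Definition~\ref{0framing}. Under the diffeomorphism $R_y \approx \inter CH_+ \approx \R^4$, the satellite operation then multiplies each strand of Figure~\ref{c1} by four (since $\partial D$ meets each meridian disk of the collar in four points), and the generalized clasp of Figure~\ref{Ny}(c) appears at $r=1$ as the tangle $\alpha_0$ of Figure~\ref{plane}. The recursive bunches $2^{n+1}, 2^{n+2}, 2^{n+3}$ in Figure~\ref{plane} arise by quadrupling the bunches $2^{n-1}, 2^n, 2^{n+1}$ of Figure~\ref{c1}, and the $-2$ twists in the $\alpha_n$ boxes descend from the writhe corrections already present in Figure~\ref{c1} (cf.\ the isotopy of Figure~\ref{Tn}). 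For a general Casson handle $CH'$ in place of $CH_+$, the same recipe applies with Figure~\ref{ram} in place of Figure~\ref{c1}, together with an adjustment of the number of twists in the innermost box $\alpha_0$ to absorb the writhe of the explicitly drawn first stage of $CH'$.

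For the uncountable family, Bi\v zaca--Gompf \cite{BG} (as already invoked at the start of Section~\ref{DrawPlanes}) provides uncountably many diffeomorphism types of refinements of $CH_+$ that may be substituted in the construction of $R$ in Section~\ref{Branch1}, yielding uncountably many diffeomorphism types of $R$ and hence of its end, since only countably many manifolds can share any given end. Proposition~\ref{rebuild}(a) combined with the fact that the double branched cover is an isotopy invariant of $P$ (and of the annulus it determines) then forces the corresponding $P$'s to determine pairwise nonisotopic exotic annuli. The main obstacle is purely combinatorial bookkeeping: verifying that quadrupling strands and propagating through the recursion produces exactly the strand counts and framings of Figure~\ref{plane}, and that the twists conspire correctly so that the satellite is smoothly (not just topologically) the surface $P$ at every level. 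All the topological substance is already packaged in Proposition~\ref{drawAnn}, the analysis of Figure~\ref{Ny}, and the branched-cover description of $R$ from Section~\ref{Branch1}.
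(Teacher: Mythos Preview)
Your proposal is correct and matches the paper's approach exactly: the theorem is stated with a \qed\ because the preceding discussion in Section~\ref{DrawPlanes} \emph{is} the proof, and you have accurately summarized that discussion---the satellite on the annulus $A$ of Proposition~\ref{drawAnn} with pattern $D$ from Figure~\ref{Ny}(c), quadrupling strands and inserting the clasp as $\alpha_0$. One small correction on attribution: \cite{BG} establishes that $R$ is exotic when $CH_+$ (or any refinement) is used, but the uncountably many diffeomorphism types of $R$ come from the DeMichelis--Freedman argument \cite{DF} as applied in \cite{menag} (referenced as ``Section~4.1'' at the start of Section~\ref{DrawPlanes}), not from \cite{BG} itself.
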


\noindent To use Figure~\ref{ram} as drawn, for example, we would change the twist box in $\alpha_0$ from $-2$ to $-4$. Unlike for annuli, there is no known way to extract an explicit pairwise nonisotopic family of these planes, since we only obtain an uncountable family in which each isotopy class occurs at most countably often. (See Section~\ref{R4}.) However, we do know there is an uncountable distinct family made from Casson handles with only positive double points, cf.\ \cite[Proof of Lemma~3.2(b), end of Section~6]{groupR4}. As with annuli, we are free to conjecture that distinct signed trees determine different exotic planes.

\begin{proof}[Proof of Theorem~\ref{0g}]
The main remaining step is to understand the behavior of the ends of our surfaces. We first show that every Casson handle interior has $g^\infty(\inter c_1)=0$. For this, we one-point compactify $\R^4$ to $S^4$ and look at an arbitrarily small neighborhood of $\infty$. After further reducing this neighborhood, we may assume its closure $Z$ is the complement of some (thinned) $\inter T_n$ as in Section~\ref{DrawCH}. Since the latter is a neighborhood of a wedge of circles in $S^4$, $Z$ is a boundary sum of copies of $S^2\times D^2$, made from a dotted unlink diagram of $T_n$ by changing dots to zeroes. In the case of $CH_+$, $\inter c_1$ intersects $\partial Z=\partial T_n\approx S^2\times S^1$ in $D^{n-1}D^*\mu$, where $\mu$ is a meridian of the 0-framed unknot describing $Z$, and the first double $D^*$ is taken using the $-2$-framing of $\mu$ (Figure~\ref{Tn}(b) with $n$ replaced by $n+1$). Since the framing of $\mu$ can be changed by any even number by sliding over the 2-handle of $Z$, we may instead describe $\inter c_1\cap\partial Z$ as $D^n\mu$. After we reduce $Z$ further by removing the 2-handle, this curve is exhibited as an unknot in $S^3$, so it obviously bounds an embedded disk in $Z$. This proves $g^\infty(\inter c_1)=0$ for $CH_+$. The proof for an arbitrary Casson handle is similar, except that $Z$ will be given by a 0-framed unlink and other even framings of the meridians may arise (cf.\ Figure~\ref{ram}).

Theorem~\ref{0g} now follows immediately. Corollary~\ref{annuli} already exibited uncountably many annuli realizing each nonzero value of $\kappa$ or $g$. These were the ends of surfaces $\inter c_1$ (for refinements of the Casson handles $CH_{m,n}$) so had $g^\infty=0$ as required for (b) of the theorem. Each exotic plane $P$ constructed for Theorem~\ref{drawPlanes} is a satellite on such an annulus $A$ with pattern $D\cup ([0,\infty)\times \partial D)$. After we cap $A$ by a disk in $Z$ as in the previous paragraph, we can also cap $P$ since $\partial D$ is unknotted in $S^3$. The resulting 2-sphere can be pulled entirely into $Z$ along $A$, where it is seen to be unknotted since $D$ is an unknotted disk in the 4-ball. Thus, $P$ is generated by unknots, so $g^\infty(P)=0$. This proves (a), and the annuli determined by these planes give the missing case of (b) with $\kappa_+=\kappa_-=g=0$.
\end{proof}

\begin{Remarks}\label{pi1}
(a) Similar reasoning shows that a surface $F$ in $\R^4$ is generated by 2-knots whenever it is a satellite on an annulus with pattern given by a slice disk $D\subset I\times S^1\times D^2$. If $D$ is also unknotted in $B^4$ then $F$ is generated by unknots (since the resulting sphere can be pulled into a neighborhood of a circle near infinity, and this circle is necessarily unknotted, with the correct framing in $\Z_2$). It seems harder in general to recognize when $F$ is an exotic plane.

(b) Arguably, the most surprising point of this section is that the planes and annuli we have drawn with level diagrams are topologically standard, so we check this directly in Figures~\ref{plane} and~\ref{c1}. (The other cases are similar.) Each pictured surface $F$ (a plane or $\inter c_1$) is exhibited without local maxima, so any nullhomologous loop in its complement can be retracted to the spine of a nullhomotopy and then pushed outward to a product of commutators in some $\partial T_n-F$. It now suffices to show that the image of $\pi_1(\partial T_n-F)\to\pi_1((\R^4-\inter T_n)-F)$ is abelian, for then $\pi_1(\R^4-F)\cong\Z$, and the complement of an open tubular neighborhood of $F$ has a system of neighborhoods of infinity (complementary to the subsets $T_n$) with fundamental group $\Z$. The neighborhood system guarantees that the pictured annulus is topologically standard (equivalently, its compactification is locally flat) by Venema \cite{V}, and the knot group $\Z$ then implies that the plane in Figure~\ref{plane} is standard (since its compactification is a topologically unknotted sphere by Freedman, e.g., \cite[11.7A]{FQ}). 

To prove the $\pi_1$-condition, recall that either diagram shows $T_n$ in the lower left picture as a solid torus $T$ in the boundary of $B_{n+1}$, extended by a collar into $\inter B_{n+1}$, where $T$ contains the upper component of the thick Hopf link and avoids the lower one. Then $\partial T_n\cap F$ is the curve in $T$ generated by the recursion. The meridian $\mu$ of $T$ bounds a disk along $\partial T_n-F$, parallel to the 0-handles whose boundaries are also meridians. The longitude $\lambda$ of $T$ is isotopic in $\R^4-\inter T_n-F$ to a meridian of the solid torus containing these 0-handle boundaries. If we push $\mu$ and $\lambda$ up to level $n+2$, they become isotopic disjointly from $F$. Thus, both are nullhomotopic in $(\R^4-\inter T_n)-F$. (The nullhomotopy of $\lambda$,  when taken to lie in $\R^4-T_n$ except for $\lambda\subset \partial T_n$, is a disk immersed with one double point; this is the core $c_{n+1}$ of the top stage of $T_{n+1}$.) It follows that the desired image of $\pi_1(\partial T_n-F)$ is also that of $\pi_1(T-F)/\langle\pi_1(\partial T)\rangle$. To compute this group, we can remove the $-2$-twist box from $\alpha_n$, erase the thick lower curve from the left picture and work in $S^3$. But we can recheck by induction that the resulting circle is unknotted. (The twist box in each remaining $\alpha_k$ disappears when we unwrap the Whitehead curve determined by $\alpha_{k+1}$.) Thus, this latter group and its image are abelian (in fact, $\Z$) as required. A similar discussion applies to surfaces generated by more general Casson handles as in Figure~\ref{ram}. The main difference is that each $\lambda$ bounds an immersed disk with multiple double points (the core of the attached kinky handle). 
\end{Remarks}

\subsection{The inextendible involution of the end}
Recall that the inextendible actions of Theorem~\ref{group}(b) were constructed using an involution of the end of an exotic plane $(\R^4,P)$ that could not be diffeomorphically extended over the pair.  This involution descends from either of the involutions $r_x$ or $r_z$ on $N$ (Figure~\ref{N}(a)), so is pictured as $r_z$ in Figure~\ref{Ny}(a) of $N_y\approx\R^4$. As we have seen, the pictured invariant punctured torus is the branch locus of the branched covering $N\to N_y$, and deleting one ribbon move yields $P$ with the inextendible involution of its end. To draw a level diagram of this, we must perform the satellite operation of Theorem~\ref{drawPlanes} $r_z$-equivariantly. The pattern is seen in Figure~\ref{Ny}(b), where we attach $2CH$ to a meridian of the upper fixed point of the dotted circle, with $CH$ any refinement of $CH_+$ (to guarantee that $R$ is exotic). The level diagram of $2CH$ can be drawn by connected summing two diagrams for $CH$ so that they are interchanged by a $\pi$-rotation that reverses the string orientation on the sum. The equivariant satellite operation then quadruples all strands of $2CH$ and at one fixed point inserts the generalized clasp exhibited at the center of Figure~\ref{Ny}(b). (This is awkward to draw in 2 dimensions, but can be visualized 3-dimensionally.) We conclude:

\begin{sch}\label{endSym}
For any refinement $CH$ of $CH_+$, Figure~\ref{Ny}(b) is the pattern for an $r_z$-equivariant satellite operation on the annulus obtained from $2CH$, exhibiting a simple exotic plane $P$ whose end is symmetric under a $\pi$-rotation of $\R^4$, but for which the involution cannot extend to any self-diffeomorphism of $(\R^4,P)$. Symmetrically adding a second ribbon move to the figure gives an invariant, topologically trivial punctured torus with the same exotic end as $P$. This is the branch locus of the branched covering $N\to N_y\approx\R^4$. \qed
\end{sch}

\begin{Remarks}\label{reflect}
(a) In the literature \cite{DF}, \cite{menag}, \cite{BG}, $R$ is often described by deleting a pair of ribbon disks of the $(-3,3,-3,3)$-pretzel link from $B^4$ and attaching a Casson handle to a meridian of each component. These disks are shown in Figure~\ref{pretzel} (ignoring the widely dashed circle). The two finely dashed arcs, one of which contains the point at infinity, are the required ribbon moves. (One of these arcs immediately cancels, but we retain both to display a symmetry.) Our current methods easily recover this pretzel link description of $R$, while exhibiting the action by $G=\Z_2\oplus\Z_2$ on its end displayed in Figure~\ref{N}(a): There is an isotopy of Figure~\ref{Ny}(b) interchanging the ribbon disks, after we isotope the disk bounded by the dotted circle to add a cancelable ribbon move. This isotopy restricts equivariantly (under rotation about the $z$-axis) to the link (ignoring the ribbon moves), so it preserves the inextendible involution of the end of $P$. We can now interchange the roles of the two circles, so that $P$ is given by the obvious disk bounded by the dotted circle and the ambient $\R^4$ is the complement of the other disk with a Casson handle added to its meridian. The double branched cover $R$ of $P$ is now easily seen to be Figure~\ref{pretzel}, with branch locus given by the obvious disk in $B^4$ bounded by the widely dashed circle. The involution $r_y$ is the pictured covering involution. If the Casson handles $2CH$ are attached to meridians at points projecting to the top of the dashed circle in Figure~\ref{pretzel}, the full $G$-action of the end is generated by $r_y$ and rotation about the vertical axis, which turns out to be $r_x$ in Figure~\ref{N}(a). (This approach does not distinguish $r_x$ from $r_z$. However, we can reach this same conclusion directly from Figure~\ref{N}(a) by considering the small dotted circles to represent disks in $B^4$ and equivariantly canceling the large 1-2 handle pair of $R$. The required equivariant isotopies to reach Figure~\ref{pretzel}, via  \cite[Figure~12]{menag}, are difficult.)

\begin{figure}
\labellist
\small\hair 2pt
\pinlabel  {$r_y$} at 87 23
\endlabellist
\centering
\includegraphics{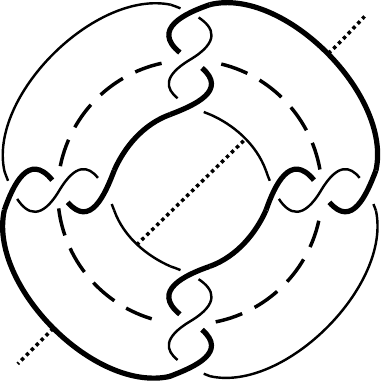}
\caption{Ribbon disks for the $(-3,3,-3,3)$-pretzel link. Adding a Casson handle to each meridian of the complement gives $R$ with its involution $r_y$ and $\Z_2\oplus\Z_2$-action of its end (with $r_x$ given by rotation about the vertical axis).}
\label{pretzel}
\end{figure}

(b) The pretzel link of Figure~\ref{pretzel} actually exhibits a $\Z_2^3$-action extending the $G$-action above. This only seems to contribute one additional $G$-action to our current discussion (see (c) below), but the full $\Z_2^3$-action may be useful for cork theory. This action can naturally be seen in the figure by one-point compactifying $\R^3$ and identifying the page as the equatorial $S^2$ of $S^3\subset\R^4$. Then the three standard coordinate axes and the three coordinate circles containing center points of the twisted bands become the intersections with $S^3$ of the six coordinate 2-planes of $\R^4$, so are natural axes of rotation, generating the orientation-preserving subgroup of the $\Z_2^4$ of coordinate reflections of $\R^4$. The antipodal map can be extended over $R$ but fixes only $0\in\R^4$ with quotient homeomorphic to a cone on $\R P^3$. Rotation in the plane of the paper extends over the ribbon complement but cannot be further extended over $R$ as an involution with branch locus $\R^2$, since it sends each component $K_i$ of $L$ to itself without fixed points. Such a rotation of $K_i$ would correspond to a rotation in $\partial (2CH)$ fixing the attaching circle. The resulting fixed set in $2CH$ would then be a forbidden smooth disk spanning the attaching circle. There are only two subgroups of order greater than 2 avoiding these two elements. However, if we replace the Casson handles by 2-handles with a given framing $n$, the full $\Z_2^3$-action action extends over the resulting 3-manifold, and for $n\le-4$ (and maybe larger) these diffeomorphisms do not all extend over the resulting cork (cf.~\cite[Theorem~6.4(d)]{groupR4}).

(c) Recall from the beginning of Section~\ref{Branch} that our diagrams are oriented as in \cite{BG}, so oppositely to \cite{menag}. We can now see that both conventions produce the same $\R^4$-homeomorphs. This is because Figures~\ref{Ny}(b) and \ref{pretzel} admit reflectional symmetries (diagonally). Thus, there is an orientation-preserving diffeomorphism between Figure~\ref{N}(a) and its mirror image (after surgering the 0-framed curve to a dotted circle, but before attaching the Casson handles). This preserves (up to isotopy) the meridians where the Casson handles attach. Replacing both copies of $2CH$ by $CH_+$ in both diagrams now gives diffeomorphic smoothings $R$ of $\R^4$, so the version from \cite{menag} is also exotic without ramification via \cite{BG}. We similarly obtain a diffeomorphism for any fixed choice of $CH$, and the involutions $r_y$ (but not $r_x$) correspond. The main difference between the two conventions is that the quotients of $r_x$ (and similarly $r_z$) have opposite orientations. Thus, $N_x$ in \cite{menag} is an exotic $\C P^2-\{p\}$ with a positive double point while $N_z$ is standard, and $R_x^*$ has the end of a negative definite, nondiagonalizable 4-manifold but lives in $\C P^2$ and cannot embed in a negative definite, closed 4-manifold. For a suitably ramified $CH$, we obtain $R$ as an exotic $\R^4$ admitting both $G$-actions, sharing the involution $r_y$, and with the corresponding large exotic $\R^4$-homeomorphs $R_x^*$ from the two actions mirror images of each other. The origin of this symmetry is that $S^2\times S^2$ has an orientation-reversing diffeomorphism reflecting (say) the first factor. In holomorphic affine coordinates, this conjugates the group action of Section~\ref{Branch1} by complex conjugation in the first factor, interchanging $r_x$ and $r_z$. In diagrams, this reverses the clasp of the Hopf link and the orientation of one circle (so the linking number is still $+1$). Equivalently, we isotopically flip over one circle fixing the other. The symmetry is broken when we pass to the subset $N$ by adding clasps and Casson handles, resulting in the exotic quotient of $N$ lying in oppositely oriented Hopf bundles.

(d) There is also a simpler exotic $\R^4$ appearing in \cite{BG} (see \cite{GS} for a simpler construction) but it does not appear to admit an involution or generate an exotic plane. (It does admit a Stein structure, while $R$ does not apear to.) This arises since the middle level of the h-cobordism has a 2-handle canceling the sum of the 1-handles in Figure~\ref{N}(a). The resulting diagram has a single dotted circle, surrounding the central clasp, with a single meridian Casson handle that can be any refinement of $CH_+$. The analogue of Figure~\ref{pretzel} is the $(-3,3,-3)$-pretzel knot $K$, also known as $\bar 9_{46}$, obtained from the figure by removing a right-twisted band (e.g. \cite[Lemma~7.1]{groupR4}). This time there is no symmetry cyclically permuting the bands, so the relevant symmetry group replacing $\Z_2^3$ in (b) above is $\Z_2^2$. Now the analogue of $r_y$ sends $K$ to itself without fixed points (rather than interchanging the components of $L$), so the reasoning in (b) shows it cannot extend over the Casson handle. Thus, we do not obtain an involution of the exotic $\R^4$, only involutions $r_x$ and $r_z$ of its ends that no longer commute. (The analogue of Figure~\ref{N}(a) has no common fixed point on the central dotted circle for locating a Casson handle compatible with both.) Either of these can be chosen as Casson's involution that cannot extend over the exotic $\R^4$. The latter is realized as rotation in the plane of the paper if the knot is drawn with parallel bands \cite[Lemma~7.1]{groupR4}. Since the analogue of Figure~\ref{pretzel} has no reflectional symmetry, there is no discussion analogous to (c) above. In particular, it is not known if adding $CH_+$ to the mirror image of the pretzel-knot ribbon-disk complement gives an exotic $\R^4$.
\end{Remarks}

\subsection{Group actions}\label{drawGroup}
To draw the more general group actions of Section~\ref{Group} on simple exotic planes, we only need to understand end sums. The finite (cyclic) case consists of equivariantly connected summing copies of a given diagram to a 0-handle with its obvious action, using a band at each copy of $\alpha_0$. For infinite sums, we also need to add constants to the radial coordinates of the summands so that the intersection with each 4-ball has a finite handle structure. For example, a $\Z$-invariant exotic plane is obtained from a collection $\{P_m|\thinspace m\in\Z\}$ of copies of $P$, where $P_m$ is obtained from $P$ by translating by $m$ units in a fixed direction (and $P$ is drawn in a 3-ball of diameter 1) and adding $|m|$ to its radial coordinate $r$. The generator sends $P_m$ to $P_{m+1}$ by translation and adding $\pm1$ to $r$ (and suitably adjusting $r$ on the bands connecting to the 0-handle). Realizing a $\Q$-action is similar but more subtle, since small elements of $\Q$ require large shifts of $r$. This is allowable since we assume the discrete topology on $\Q$. (Each group element is continuous since the summands lie in disjoint closed regions in $\R^4$ whose union has no extraneous limit points. The algebraic structure fits together by the equivalent description in Section~\ref{Group}.) For $\Q^\omega$, start infinitely many such $\Q$-clusters on separate intervals of the 0-handle, staggered in $r$ to retain local finiteness of the handle structure. For free groups, tessellate the plane with fundamental domains and start a copy of $P$ in each domain as the 0-handle boundary expands. (It may be simplest to consider each free group as a subgroup of the free group on two generators.) For the inextendible actions of the end in Theorem~\ref{group}(b), construct these actions as for (a), but then switch the ribbon move of Figure~\ref{Ny}(b) to the other diagonal in one copy of $P$.


\section{Open questions}\label{Questions}

We have now constructed and studied two main types of exotic planes. The simple examples from Theorem~\ref{0g}(a) are constructed so that their double branched covers are made from a simple 4-manifold by adding Casson handles (Figure~\ref{N}(a)). Thus, we could draw the embeddings explicitly (Theorem~\ref{drawPlanes}). The resulting radial functions have no local maxima and connected superlevel sets. These planes are generated by unknots (Definition~\ref{gen}), so have $g^\infty=\kappa^\infty_\pm=0$. Each is double branch covered by a small exotic $\R^4$ that embeds in $\R^4$, and the planes are all equivalent, in the sense of Theorem~\ref{order}, to the standard plane. These properties are retained by infinite end sums of such examples, which we still consider simple. The other type of exotic plane is more complicated. We constructed such planes in two different ways: by deleting a singular point from an almost-smooth 2-sphere in $S^4$ realized by an infinite intersection of Casson handles (Theorem~\ref{ginfty}), or by exhibiting the double branched cover as the complement of a similarly complicated intersection (Theorems~\ref{order}(b,c) and~\ref{sum}). Either way, the construction seems too complicated to draw explicitly. Each level diagram requires infinitely many local maxima (Scholium~\ref{ginftySch} and Corollary~\ref{inf2h}, respectively), and sometimes the number of components of the superlevel sets must become arbitrarily large (Scholium~\ref{infEnd}). Planes of this second type have nonzero $g^\infty$. In fact, the exotic planes of Theorem~\ref{ginfty} realize all nonzero values of $\kappa^\infty$ and $g^\infty=\max\{\kappa^\infty_\pm\}$, and those of Theorem~\ref{sum} realize each of infinitely many nonzero values of $g^\infty$ by uncountably many exotic planes (Corollary~\ref{ginf}). In both Theorems~\ref{order}(b,c) and~\ref{sum}, each plane is double branch covered by a large exotic $\R^4$ with nonzero Taylor invariant.

\begin{ques}\label{2types}
Are the above properties of simple exotic planes all equivalent?
\end{ques}

One candidate for a counterexample is the branch locus $P^*_y$ of the map $R^*\to R_y^*\approx\R^4$ generated by the involution $r_y$ (Section~\ref{Branch1}). Like our simple examples, this is generated by unknots, so $g^\infty(P^*_y)=0$. (Figure~\ref{Ny}(a) shows the embedding $N_y\subset S^4$, where we thicken the Casson handle to a 2-handle and add a 4-handle, with $R^*_y$ the open complement of a smaller version of $N_y$. The full branch locus is the pictured punctured torus, capped by an unknotted disk in the 4-handle. Surgering this gives a disk in the end of $R^*_y$ capping $P^*_y$ to an unknotted sphere in the 4-handle.)  This contrasts with any plane double covered by $R^*_x$, which has $g^\infty>0$ (and for which the corresponding full branch locus is $\R P^2$). However, $R^*$ is made by removing from $S^2\times S^2$ a subset (with cellular quotient) built with nested intersections of Casson handles, so in that respect it resembles $R_x^*$ more than $R$. Do level diagrams of $P^*_y$ require infinitely many local maxima? This would follow if $R^*$ requires infinitely many 3-handles (proof of Theorem~\ref{infLocMax}). However, $R^*$ has vanishing Taylor invariant, so our argument used  on $R_x^*$ breaks down. The author has not analyzed the analogous plane $P_z^*$ in $R^*_z\approx\R^4\subset\C P^2$ generated by $r_z$. What is its behavior at infinity? The branch locus in $\C P^2$ is homologically essential (a quadric curve) so doesn't immediately show $g^\infty=0$. We can ask more generally,

\begin{quess}\label{gen2knots} Does every exotic plane with small double cover have $g^\infty=0$? Is every exotic plane with $g^\infty=0$ (so generated by 2-knots) generated by unknots? Is there a relation between these conditions and having a diagram with no local maxima? Is there an exotic plane requiring local maxima but only finitely many?
\end{quess}

\noindent Another possible source of counterexamples is the exotic planes of Theorem~\ref{ginfty} with $g^\infty>0$, which have unknown double branched covers.

\begin{quess}\label{CHbranch}
Must these covers be large? Is there an exotic plane whose double cover is the standard $\R^4$? Do distinct exotic planes ever have diffeomorphic double covers? For example, are $P_y^*$ and $P_z^*$ distinct? What about $P_\infty$ and $P'_\infty$ from Theorem~\ref{group}? (More generally, consider end sums with different choices of surface orientation.) What can be said about higher degree branched covers?
\end{quess}

Sections~\ref{Branch} and \ref{Diagrams} raise other questions:

\begin{quess}\label{moreG}
What else can be extracted from the $\Z_2\oplus\Z_2$-action of Section~\ref{Branch1}? What about the simpler exotic $\R^4$ of Remark~\ref{reflect}(d)? Are there other interesting exotic group actions that we can study in this manner?
\end{quess}

\noindent In addition to the above discussion of $P^*_y$ and $P^*_z$, we can consider other quotients from Section~\ref{Branch1}. Since the action is topologically standard, the branch loci of the maps $N\to N_z\approx \C P^2-\{p\}$ and $N_x\to N_G\approx\R^4$ are, respectively, an exotic punctured quadric curve and M\" obius band. It should be possible to draw these with explicit level diagrams in $\R^4$ or its blowup. It should also be possible to describe the whole $\Z_2\oplus\Z_2$-action on $N$ via explicit surfaces in $R_G\approx\R^4$ and perhaps use this to shed some light on the action on the end of $R^*$. Does this action, or the $\Z_2^3$-action of Remark~\ref{reflect}(b), provide new insight on corks? A large $\R^4$ can be constructed in $S^2\times S^2$ rather than $\overline{\C P^2}$ \cite[Section~9.4]{GS} -- or in many other manifolds, cf.~\cite[Theorem~6.6]{steintop}. In particular, a large $\R^4$ can be embedded in Figure~\ref{U}, containing the $S^4$-summand. Can this (or more general examples) be assumed equivariant? What new phenomena result?

\begin{ques} Is there a family of pairwise nonisotopic exotic planes that is naturally parametrized by an interval (perhaps preserving the order defined for Theorem~\ref{order})?
\end{ques}

\noindent The double branched cover $R_x^*$ of an exotic plane constructed for Theorem~\ref{order}(b,c) lies in a pairwise nondiffeomorphic family parametrized by an interval (Section~\ref{R4}). This can be assumed to extend our $\Z_2$-invariant family parametrized by $\Sigma$, provided that the construction uses towers with enough embedded surface stages (as in Freedman--Quinn \cite{FQ}) in place of Casson handles (cf.~\cite[Theorem~3.2]{DF}). However, it isn't clear whether the quotients $R^*_G$ are the standard $\R^4$ for parameter values outside the Cantor set.

\begin{ques} Does every compact surface in $B^4$ generate an uncountable family of topologically isotopic surfaces in $\R^4$ as in Corollary~\ref{slice}? How generally do noncompact surfaces in $\R^4$ lie in such families (countable or uncountable)?
\end{ques}

\noindent Theorem~\ref{2knot}(b) gives uncountable families for certain slice disks, but fails for higher genus surfaces. An infinite end sum $F_\infty$ of standard punctured tori is a candidate for a surface that cannot be changed by summing with exotic planes, cf.~Questions~\ref{stabilize} and below. However, an exotic $F_\infty$ can be made by attaching copies of $2CH$ to $(-\infty,0]\times\R^3$ along an infinite union of Hopf links so that a fixed $\pi$-rotation flips each Casson handle. This $\#_\infty S^2\times S^2$-homeomorph can be chosen to be Stein and hence exotic by the adjunction inequality. Furthermore it, and hence the exotic $F_\infty$ branch locus, comes in uncountably many diffeomorphism types distinguished by the genus function \cite[Theorem~3.5]{MinGen}.

\begin{quess}\label{univ}
How different is topological proper 2-knot theory from the smooth theory modulo exotic planes? (See Question~\ref{modplanes}.) We can ask this for embeddings of $\R^2$ or for higher-genus punctured surfaces, or consider infinite-genus surfaces such as $F_\infty$. Is there a ``universal" plane analogous to (and maybe branch covered by) the Freedman--Taylor universal $\R^4$ \cite{FT}?
\end{quess}

\noindent The defining property of the universal $\R^4$ (that is, the interior of the universal half-space of \cite{FT}) is that end summing with it (at each end) turns homeomorphic 4-manifolds diffeomorphic whenever the corresponding Kirby--Siebenmann uniqueness obstruction vanishes. One might hope for an exotic plane that similarly implies the map of Question~\ref{modplanes} is a bijection, but this is probably too optimistic.

\begin{probs}\label{ginfrange} a) Prove that every positive integer is $g^\infty(P)$ for uncountably many distinct exotic planes $P$ (cf.~Corollary~\ref{ginf} and Scholium~\ref{infEnd}). Prove the same for $\kappa^\infty$ in $(\Z^{\ge0}\cup\{\infty\})\times(\Z^{\ge0}\cup\{\infty\})$.

b) Find exotic planes (or annuli) with $g^\infty\ne\max\{\kappa^\infty_\pm\}$. Find exotic annuli with $g\ne\max\{\kappa_\pm\}$.
\end{probs}

\noindent One approach to (a) would be to apply Theorem~\ref{2knot}(b) (and maybe Remark~\ref{2knotRem}) to the exotic planes of Theorem~\ref{ginfty}. However, it is not clear whether the definiteness hypothesis can be applied, cf.~Questions~\ref{CHbranch}. This may depend on signs of double points, so may work better if one component of $\kappa^\infty$ vanishes. If the surface  in (b) is allowed to be topologically knotted, examples can be constructed from knots in $S^3$. (The annulus made from the figure-eight knot has $g=g^\infty=1$ but $\kappa_\pm=\kappa^\infty_\pm=0$.)

\begin{prob}\label{hardfig} Draw a level diagram describing an exotic plane (or annulus) with $g^\infty>0$.
\end{prob}

Section~\ref{Group} presented exotic planes with large discrete group actions whose nontrivial elements were not pairwise isotopic to the identity, as well as planes with similar inextendible actions of their ends. It is natural to ask what other sorts of actions can occur. Since $\R$-actions (flows) can always be constructed, we restrict to actions with torsion. For comparison, torus knots in $S^3$ admit circle actions, and hence finite cyclic actions whose elements are pairwise isotopic to the identity. These knots can then be coned to PL (or holomorphic) almost-smooth (but not locally flat) embeddings $\R^2\emb\R^4$ with circle actions.

\begin{quess} Are there exotic planes with finite cyclic actions whose elements are pairwise isotopic to the identity? Circle actions? Are all torsion group actions on exotic planes discrete? What about actions on exotic (topologically standard) annuli?
\end{quess}

We conclude with several more general questions. To begin, note that connected summing a pair $(X,F)$ with $S^4$ containing a standardly embedded $T^2$, or $\R P^2$ with normal Euler number $-2$ or $+2$, sums any double branched cover with $S^2\times S^2$, $\C P^2$ or $\overline{\C P^2}$, respectively. Thus, the double branched cover of any exotic plane becomes diffeomorphic to that of the standard plane after sum with infinitely many standard tori along a discrete set. According to \cite[Proposition~5.4]{BG}, any $R$ as in Section~\ref{Branch1} with only positive double points in $CH$ becomes standard after such a sum with copies of $\C P^2$ but not with a sum of copies of $\overline{\C P^2}$. Thus, the corresponding planes remain exotic after infinite connected sums with standard positive projective planes.

\begin{quess}\label{stabilize}
Do these planes become standard after infinite connected sums with negative projective planes? With tori? Do sums with tori make all planes standard?
\end{quess}

\noindent In a compact setting with suitably controlled knot groups, the corresponding question for finite sums with standard tori has been answered affirmatively (Baykur--Sunukjian \cite{BS}). Studying sums with standard copies of $\R P^2$ of a fixed sign may also be interesting in the compact setting.

\begin{quess}
Applying Proposition~\ref{rebuild}(a) to any exotic plane determining an exotic annulus gives an exotic open 2-handle with standard interior and (unlike possibly all Casson handles) a smooth core. Is this useful? What does Proposition~\ref{rebuild}(b) give us?
\end{quess}

\begin{prob} Find a more direct way to distinguish exotic planes. Are there combinatorial invariants?
\end{prob}

\noindent Such invariants could not be determined by underlying topology such as the knot group.

\begin{quess}
Are there exotic planes in $\C^2$ (or in the open unit ball or other Stein structure on $\R^4$) that are holomorphic? Symplectic? Lagrangian? What about almost-smooth (topologically standard) planes (e.g.~Corollary~\ref{annuli}) that are symplectic (or Lagrangian) at nonsingular points?
\end{quess}

\noindent There are Lagrangian disks in $B^4$ \cite{Ch}, and holomorphic embeddings of a complex open disk into $\C^2$ \cite{BKW}, that are topologically knotted, but there is no smoothly knotted {\em algebraic} embedding $\C\emb\C^2$. (See \cite{R} for a topological proof of the latter.) Every exotic plane $P$ is holomorphic in some complex (K\"ahler but not Stein) structure on $\R^4$. (Perturb $P\subset \C^2$ so that it contains some holomorphic open disk $D$, then note that $(\C^2-(P-D),D)$ is diffeomorphic to the pair $(\R^4,P)$.) Our diagrams from Section~\ref{Diagrams} seem hard to make symplectic since they are constructed with many antiparallel sheets. Our other exotic planes cannot be holomorphic, by the maximum modulus principle, since their diagrams require local maxima. Similarly, any holomorphic exotic plane must have double branched cover with vanishing Taylor invariant by Theorem~\ref{infLocMax}.

\begin{quess}\label{uniqueSum}
Are there embeddings of one-ended surfaces in $\R^4$ whose end sum in $\R^4$ depends on a choice of rays? What about sums of the form $(X,F_1)\natural (\R^4,F_2)=(X,F_1\natural F_2)$? Does the answer to the latter depend on whether we distinguish these up to pairwise diffeomorphism or isotopy? Can an end sum as in Proposition~\ref{infSum} fail to commute?
\end{quess}

\noindent By Proposition~\ref{infSum}, any examples would involve infinite genus. Recall that pairwise diffeomorphism implies isotopy for surfaces in $\R^4$. Compare with ray dependence of sums of 4-manifolds, e.g.,~\cite{CG}, \cite{CGH}.

\end{document}